\documentclass[12pt, reqno]{amsart}      
\usepackage{amsmath,amssymb,amsfonts,amsthm,amscd,cite}
\usepackage{fancyvrb, hyperref, fancyhdr}
\usepackage{paralist}
\usepackage[dvipsnames]{xcolor}
\usepackage{graphicx}
\usepackage{a4wide}
\usepackage{enumerate, verbatim, stmaryrd}
\usepackage{color,booktabs, float}
\usepackage{comment}
\usepackage[shortlabels]{enumitem}
\newtheorem{theorem}{Theorem}[section]
\newtheorem{proposition}[theorem]{Proposition}

\newtheorem{lemma}[theorem]{Lemma}

\theoremstyle{definition}

\newtheorem{Note}[theorem]{Note}

\newtheorem{assumption}{Assumption}[section]

\usepackage{hyperref}
\usepackage{mathrsfs}
\def\div{\mathop{\mathrm{div}}\nolimits}
\def\R{\mathbb{R}}

\def\U{\tilde{U}}

\def\L{\mathscr{L}}

\newcommand{\csol}{\mathcal{C}l}
\newcommand{\s}{g}
\newcommand{\vecfield}{\mathcal G}
\newcommand{\pathrho}{\{\rho_{\cdot}\}}
\newcommand{\Prd}{\mathcal{P}(\R^d)}
\newcommand{\N}{\mathbb N}
\newcommand{\nup}{^{(n)}}
\newcommand{\tlh}{\tilde{\mathcal{L}}_H}
\newcommand{\rxu}{R_x^{\tilde U}}
\newcommand{\lan}{\langle}
\newcommand{\vd}{\mathbf{d}}
\newcommand{\ran}{\rangle}
\newcommand{\be}{\begin{equation}}
	\newcommand{\ee}{\end{equation}}
\newcommand{\pa}{\partial}
\newcommand{\lv}{\left\vert}
\newcommand{\rv}{\right\vert}
\newcommand{\cL}{\mathcal L}
\newcommand{\cS}{\mathcal S}
\newcommand{\cH}{\mathcal H}
\newcommand{\ltm}{L^2_{\mu}}
\newcommand{\dense}{\mathcal{V}}
\newcommand{\rxv}{R_x^{\tilde U}v}

\newcommand{\cI}{\mathcal I }
\def\S{\mathcal{S}}
\newcommand{\cZ}{\mathcal Z}
\newcommand{\dd}{,{\dots} , }
\newcommand{\cD}{\mathcal{D}}
\newcommand{\psis}{\psi^{\star}}
\newcommand{\psisj}{\hat{\psi}^{\star}}

\newcommand{\cP}{\mathcal P}
\newcommand{\prho}{_{\rho}}
\newcommand{\trans}{\mathcal F}
\newcommand{\mko}{\mathfrak{L}}
\newcommand{\hV}{\hat{V}}
\newtheorem{hypothesis}[theorem]{Hypothesis}

\def\div{\mathop{\mathrm{div}}\nolimits}
\def\R{\mathbb{R}}

\def\U{\tilde{U}}

\begin{document}
	\title{Non-reversible processes: GENERIC, Hypocoercivity and fluctuations. }
	
	\author{M. H. Duong and M. Ottobre}
	
	\address{\noindent \textsc{Manh Hong Duong, School of Mathematics, University of Birmingham, Birmingham B15 2TT, UK}} 
	\email{h.duong@bham.ac.uk}
	
	\address{\noindent \textsc{Michela Ottobre, Maxwell Institute for Mathematical Sciences, Department of Mathematics, Heriot-Watt University, Edinburgh EH14 4AS, UK}} 
	\email{m.ottobre@hw.ac.uk}
	
	\maketitle
	
	\begin{abstract}
		We consider two approaches to study non-reversible Markov  processes, namely the Hypocoercivity Theory (HT) and GENERIC (General Equations for Non-Equilibrium Reversible-Irreversible Coupling); the basic idea behind both of them  is to  split  the process into a reversible component and a non-reversible one, and then quantify the way in which they interact.  We compare such theories and  provide explicit formulas to pass from one formulation to the other; as a bi-product we give a  simple proof  of the link between reversibility of the dynamics and gradient flow structure of the associated  Fokker-Planck equation. We do this both for linear Markov processes and for a class of nonlinear Markov process as well. We then characterize the structure of the Large Deviation functional of generalised-reversible processes; this is a class  of non-reversible processes of large relevance in applications. Finally, we show how our results apply to two classes of Markov processes, namely non-reversible diffusion processes and a class of Piecewise Deterministic Markov Processes (PDMPs), which have recently attracted the attention of the statistical sampling community. In particular, for the PDMPs we consider we prove entropy decay. 
		\vspace{5pt}
		\\
		{\sc Keywords.} GENERIC, Hypocoercivity,  Fokker-Planck equations, Large Deviation Principles, non-reversible Processes, generalised-reversible processes,  gradient flows, Diffusion Processes, Piecewise Deterministic Markov Processes
		\vspace{5pt}
		\\
		{\sc AMS Classification (MSC 2020). 35Q82, 35Q84, 60J25, 60F10, 60H30,  82B35, 82C31} 
	\end{abstract}
	\date{}

	\section{Introduction}
	The study of non-reversible Markov  processes has attracted the attention of the mathematics and physics communities for several decades; while significant progress and more understanding has been brought by a large body of research, it is fair to say that several important questions regarding non-reversibility still remain unanswered. Non-reversible Markov  processes are fundamental models in  non-equilibrium statistical mechanics, for example in the study of so-called {\em open systems} \cite{Penrose}, see e.g. the vast literature on heat baths \cite{Hairer, bath}, and  in collisional kinetic theory, prominently in connection with the study of the Boltzmann equation, in its many forms and simplifications \cite{Pulvirenti, Villani, Villanicollisional}.  More recently, the observation that non-reversible processes might enjoy favourable properties in terms of speed of convergence to equilibrium, has attracted also  the statistical sampling community to further investigations in this field,  so that  the applications-driven need to produce increasingly high performing algorithms has  played an important role in pushing forward the theory of non-reversible processes \cite{AndDob, AndLiv, Ottobre, DuncPavl}, with particular reference to the class of {\em Piecewise Deterministic Markov Process} (PDMPs) \cite{Davis, DoucetVanetti, FearRos, Faggionato2008, Faggionato2009, Faggionato2012}.  
	
	In this paper we use the word {\em reversible} to refer to stochastic processes which are  {\em time-reversible}, i.e. for any $T>0$ the process $\{X_t\}_{t \in [0,T]}$  and its time-reversed $\{X_{T-t}\}_{t \in [0,T]}$  have the same  distributions (on the space of continuous paths); because of this time-symmetry, reversible processes give rise to  Partial Differential equations (PDEs) associated with  symmetric (i.e. formally self-adjoint) operators \cite{Pavliotis}.    However we point out that  the term ``reversible" is used with very different  meanings throughout the literature, and certainly across the strands of research that we use in this paper; so,  to avoid confusion,  we clarify matters in Subsection \ref{subsec:reversibility}.

	The theory of reversible processes is  by far more settled than its non-reversible counterpart, and this is true both of probabilistic and of analytic/functional analytic approaches.  Nonetheless conceptual frameworks  for the analysis and modelling of non-reversible phenomena do exist; in this paper we consider two of them, the Hypocoercivity Theory (HT), initiated by Herau \cite{Herau} and then made systematic by C. Villani \cite{Villani}, and GENERIC (General Equations for non-Equilibrium Reversible-Irreversible Coupling), whose first complete exposition can be found in \cite{Ottinger}. 
	The premise and purpose of these two theories is quite different: the HT is from its inception a functional analytic theory aimed at studying exponentially fast convergence to equilibrium for non-reversible processes; GENERIC was born in the physics and engineering community,  as a  framework to help  applied scientists  model reversible (dissipative) and non-reversible (conservative) contributions to a given dynamics, and only  later evolved into a more mathematised theory, by hands of \cite{kpelmath, MielkePeletierRenger2014, Mielke2011}, and references therein, which also significantly pushed it forward.   One of the main purposes of this paper is to observe that the formulations of such theories, which can look quite different from the outset,  are indeed complementary and substantially equivalent, in the sense that we can provide formulas to pass from one formulation to the other, and we do so in Section \ref{sec:3} and Section \ref{section:decomposition};  we also give examples to show  how this fact can be exploited in practice. 
	
	To give more background on these two theories  let us start with a toy example. 
	Consider the  second order Langevin equation, that is, the evolution
	\be\label{Langevin}
	dx_t= v_t dt, \qquad 
	dv_t =- \pa_x V(x_t) dt-v_t dt+  \sqrt{2} dW_t\, ,
	\ee
	where $(x_t, v_t) \in \R^2$ (for simplicity),  $W_t$ is one dimensional standard Brownian motion,  the potential $V:\R \rightarrow \R$ is smooth,  confining (i.e. $V(x) \rightarrow \infty$ as $|x| \rightarrow \infty$) and grows at least quadratically at infinity.  Under these conditions the process \eqref{Langevin} admits a unique invariant measure, namely the measure with density 
	$\mu:\R^2 \rightarrow \R$ given by
	\be\label{target}
	\mu(x, v) = \frac{1}{Z}e^{-V(x)} e^{-\frac{v^2}{2}}\,,
	\ee
	where $Z$ is a normalization constant.
	To the SDE \eqref{Langevin}  one can associate two PDEs, namely the Kolmogorov equation (KE), describing the  evolution of so-called observables, i.e. of  quantities of the form $\mathbb {E}(f(x_t, v_t)\vert (x_0, v_0)=(x,v))$, for any given $f \in C_b(\R^2)$,\footnote{Here and throughout $C_b(\R^d)$ denotes the set of functions $f: \R^d \rightarrow \R$ which are continuous and bounded}  and the Fokker-Planck (FP) equation, describing the evolution of the law of the process; in the case of the diffusion \eqref{Langevin} such equations take, respectively, the form
	\be\label{kolmexample}
	\pa_tu_t(x,v) = \cL u_t : =v\pa_xu_t- \pa_x V(x)\pa_v u_t - v\pa_v u_t+ \pa_v^2u_t \,, \quad u_0(x,v)=f(x,v)\, ,
	\ee
	and
	\be\label{FPLangevin}
	\pa_t\rho_t(x,v) =\cL' \rho_t := - v \pa_x \rho_t+ \pa_x V(x) \pa_v \rho_t+ \pa_v(v \rho_t) + \pa_v^2 \rho_t \, , \quad \rho_0 = \mathrm{Law}(x_0, v_0), 
	\ee
	respectively, where  $\cL$ is the operator defined on smooth functions $g:\R^2 \rightarrow \R$ as 
	$$
	\cL g := v\pa_x g  - \pa_x V(x)\pa_v g - v\pa_v g+ \pa_v^2 g \, , 
	$$
	usually referred to as the {\em generator} of the system, 
	and $'$ denotes (formal) adjoint  in $L^2(\R^2):=\{f: \R^2 \rightarrow \R : \int_{\R^2}|f(x,v)|^2dxdv < \infty\}$; the operator $\cL'$ is also called the {\em Fokker-Planck operator} (we will give more context on these equations in Section \ref{Sec2}). 
	
	The evolution \eqref{Langevin} can be split into a Hamiltonian component, namely
	$$
	dx_t= v_t dt, \qquad 
	dv_t =- \pa_x V(x_t) dt  \,,
	$$
	plus an Ornstein-Uhlenbeck (OU) process
	$$
	dv_t = -v_t dt+  \sqrt{2} dW_t \,.
	$$
	The generator of the Hamiltonian  dynamics is the 
	Liouville operator $B:= v\pa_x - \pa_x V(x)\pa_v$; this operator is antisymmetric in $\ltm (\R^2):=\{f:\R^2 \rightarrow \R : \int_{\R^2}|f(x,v)|^2 \mu(x,v)dxdv < \infty\}$. Because of this antisymmetry, along the  flow generated by $B$,  the $L^2_{\mu}$-norm  is conserved (see Subsection \ref{subsec: hypocoerc}). 
	The generator of the OU process is instead the operator $\cL_{OU} = -v \pa_v + \pa^2_v$; 
	by setting $A=\pa_v$, we have $\cL_{OU} = -A^*A$, 	where $^*$ denotes  adjoint  in $\ltm$, so that   $A^*= -\pa_v+v$.  Therefore, overall,  equation \eqref{kolmexample} can be written in the by now classic (linear) hypocoercive form 
	\be\label{FPhypocoercive}
	\pa_tu_t= Bu_t - A^*Au_t \, . 
	\ee
	While  $B$ is antisymmetric in $\ltm$, trivially,  the operator $A^*A$  is  symmetric  in $\ltm$; moreover, along the flow generated by $-A^*A$, the $\ltm$- norm is dissipated,  see again Subsection \ref{subsec: hypocoerc}.  Writing the dynamics in the above form,  is the starting point of the HT and it serves the purpose of emphasizing the splitting of the dynamics into its symmetric/dissipative  and antisymmetric/conservative parts. Once the dynamics has been cast in the form \eqref{FPhypocoercive}, dissipation of the $\ltm$-norm along the flow generated by $\cL$ becomes trivial to show, at least if one is not after a rate. The aim of the HT is to determine sufficient conditions on $A$ and $B$ such that dissipation towards the steady state $\mu$ is exponentially fast. 
	
	The setting of GENERIC is, in principle, analogous, however the theory is more adapted to working with the Fokker-Planck formulation of the dynamics; informally (precise definition in Subsection \ref{subsec:GENERIC}), we say that the evolution   \eqref{FPLangevin} is  in  GENERIC form \footnote{To be precise,  \eqref{FPgeneric} is  the pre-GENERIC form of the equation; pre-GENERIC is a more general formulation than GENERIC, we will recall the difference between such formulations in Section \ref{Sec2}. } if it can be written in the form  
	\be\label{FPgeneric}
	\pa_t \rho_t = W\rho_t - M_{\rho_t} \Big(\frac{1}{2}\vd\S(\rho_t)\Big) \,, 
	\ee
	where $W$ is some operator, $\S$ is a real-valued  functional, $\vd \S$ is the functional derivative of $\S$ (i.e. an appropriate derivative of $\S$ with respect to its argument $\rho$, see Section \ref{Sec2} ) and, for every fixed $\rho$,  $M_{\rho}$ is a symmetric positive semidefinite operator (in $L^2$); we furthermore require that the following orthogonality condition should hold:
	\be\label{ortintro}
	(W(\rho), \vd \S(\rho)) =0, 
	\ee
	where $( \cdot, \cdot)$ denotes scalar product in $L^2$. The FP equation \eqref{FPLangevin} can be cast in GENERIC form, 
	upon choosing $W=-B$, i.e. taking $W$ to be the opposite of the Liouville operator,  and furthermore choosing $\S$ to be the  {\em relative entropy} of the system, 
	\begin{align}
		\S_{\mu}(\rho)& = \int \rho(x,v)\log \left(\frac{\rho(x,v)}{\mu(x,v)} \right) dx \, dv \notag\\ 
		& \stackrel{\eqref{target}}{=} \int  \left(\rho(x,v) \log \rho(x,v)  +V(x)\rho(x,v)+\frac{v^2}{2} \rho(x,v)\right) dx \, dv \,, \label{eq: entropykFPE}
	\end{align}
	so that $\vd \S_{\mu}(\rho)$ denotes the variational derivative (Frechet derivative) of $\S$ with respect to $\rho$ (and then calculated in $\rho$), namely
	$$
	\vd\S_{\mu}(\rho) = \log \rho +1 + V(x)+ \frac{v^2}{2} \,;
	$$
	and, finally, by taking for each $\rho$, the operator $M_{\rho}$ (acting on a function $f$) to be defined as
	$$
	M_{\rho}(f):=-2 \pa_v\left[ \rho \,  \pa_v f\right] \,. 
	$$
	It is easy to show that, for each $\rho$ fixed,  $M_{\rho}$ is a symmetric operator in $L^2$, see Section \ref{subsec:GENERIC}.  By observing that the Liouville operator $B$ is antisymmetric in $L^2$ as well, one can see that the decomposition \eqref{FPgeneric} is in spirit analogous to the decomposition \eqref{FPhypocoercive}. The orthogonality condition ensures that the relative entropy $\S_{\mu}$ is dissipated along the flow.

	In view of the discussion that will follow it is important to point out that 
	the second addend of equation \eqref{FPgeneric}, i.e. the part of the equation that can be cast in the form $M_{\rho} (\vd\S(\rho))$, is the so-called {\em gradient-flow} part of the equation, see \cite{Dirr, kpel, kpelmath, MielkePeletierRenger2014}, more details on this in Section \ref{Sec2}. Hence the decomposition \eqref{FPgeneric} can also be viewed as a splitting into a  symmetric/dissipative/gradient-flow part and a antisymmetric/conservative/non-gradient flow component. As a bi-product of this discussion, we have the following: the generator $\cL_{OU}$ of the OU process is symmetric in $\ltm$ and associated to the dissipative/reversible part of the dynamics; its dual, $\cL_{OU}'$, which , using \eqref{FPLangevin},  coincides with $- M_{\rho}(\vd \S (\rho))$, is symmetric in $L^2$ and is associated to the gradient flow part of the dynamics.

	This is no coincidence and it is indeed a specific instance of a well-known `meta-theorem', substantiated by the work of Onsager and Machlup \cite{Onsager} as well as by e.g. \cite{Bertini, Kipnis1, Kipnis2, PeletierV}, according to which reversibility and gradient flow structure are related. One  way of expressing and understanding this meta-theorem is as follows: consider a stochastic particle system made (for simplicity) of i.i.d. particles,   suppose the process describing each  particle is reversible and that the particle  system converges to a deterministic limit, in the sense that the empirical measure associated with the particle system converges to a (deterministic) measure which satisfies a deterministic evolution equation.  Then such a deterministic equation is in gradient flow form. This meta-theorem, expressed in this form,  was settled into an actual theorem in \cite{MielkePeletierRenger2014}, for a large class of reversible processes. This was done by going through a Large Deviation Principle (LDP)  for the particle system. The reason why LDPs enter the picture is quite profound: it turns out that the form of the large deviation functional (LDF) appearing in the LDP is directly related to the gradient flow structure and indeed that it determines it. More specifically, if we know the form of the LDF then we can find $M$ (and $\cS$), see Section \ref{sec:4} and \cite{MielkePeletierRenger2014} for precise statements.   
	Note that typically every gradient flow evolution can be written in gradient flow form in more than one way, i.e. one can find several pairs $M$ and $\cS$ to write the same equation in gradient flow form; so if the objective is to  link reversibility and gradient flow structure,  the LDP helps `select' one such  gradient flow structure. This fact, i.e. the fact that the LDP `selects' one gradient flow form, is not in contradiction with the fact that the deterministic equation can be written in gradient flow form in more than one way: given a deterministic evolution, this evolution can be realised as limit of various  particle systems (e.g. the particle system need not be i.i.d.). Two different particle systems that converge to the same deterministic limit will, in general,  undergo different fluctuations and hence, assuming they  both satisfy a LDP, such LDPs will correspond to different large deviation functionals, which are then associated with different gradient flow structures \cite{PeletierV}.  This connection between evolutions with gradient flow structure and reversibility was then extended to a connection between evolutions in GENERIC form and `non-reversibility' in \cite{MielkePeletierRenger2014}, again making use of LDPs. In Note \ref{Note on decomposition} we elaborate on pros and cons of this approach. 
	
In view of our toy example it should be possible to state and prove the relation between reversibility and gradient flow structure without making use of LDPs. In this paper we state the mentioned `meta-theorem' as follows: assuming a given Markov process has a unique invariant measure $\mu$,  if the generator of the process is symmetric in $\ltm$ (which is equivalent to the process being reversible, see Section \ref{subsec:reversibility}) then the associated FP operator is in gradient-flow form. This allows us to prove this statement without making explicit use of LDPs. 
	
	{\bf Main Results. } We are now in a position to start explaining   the main contributions of this paper, section by section. After introducing setting and notation in Section \ref{Sec2}, in Section \ref{sec:3} we compare  HT and GENERIC -- more precisely, we compare linear HT and so called Wasserstein  pre-GENERIC, see Section \ref{Sec2} and Section \ref{sec:3} for precise statements and definitions. We show that if the KE associated with a given Markov process can be cast in the hypocoercive form \eqref{FPhypocoercive} then the corresponding FP equation can be written in  the form \eqref{FPgeneric}, {\em and} the orthogonality condition \eqref{ortintro} holds as well. We emphasize that the orthogonality condition for the FP evolution is implied by the hypocoercive structure of the KE, without extra assumptions, see Note \ref{note:linghyptopregen} on this point.   In particular, we provide explicit formulas to pass from one formulation to the other (i.e. given $A$ and $B$, we give formulas to obtain $W,M$ and $\cS$, and viceversa). On a practical level, having such ``conversion formulas" at hand allows one  to leverage results on the KE and potentially use them to produce results (almost for free) on the FP evolution, and viceversa. Since equations in GENERIC form dissipate relative entropy, a simple example of how one can produce results  by exploiting such connection is the following:  if the KE is in hypocoercive form \eqref{FPhypocoercive} then relative entropy is dissipated along the corresponding FP equation; see again Note \ref{note:linghyptopregen}. 
	
	More importantly from a conceptual standpoint, the work in Section \ref{sec:3}  provides a simple connection between ``non-reversibility'' and GENERIC structure; hence, by comparing  the symmetric part of both formulations,  it provides a  simple proof of the fact that reversibility gives rise to gradient flow structure, i.e. of the meta-theorem we mentioned above,   see Note \ref{note:linghyptopregen}.

	The results of Section \ref{sec:3} only refer to Markov processes generated by {\em linear}  \footnote{Here by {\em linear} we mean linear in the sense of McKean; i.e. the coefficients of the operator $\cL$ can be nonlinear in the state-space variable, as long as the action of the operator on functions is linear.}  Markov operators, which are typically simple to cast in the form \eqref{FPhypocoercive}. As we have already pointed out, the form \eqref{FPhypocoercive} is simply a symmetric-antisymmetric splitting (in $\ltm$), where the symmetric part is required to have a certain form, i.e. the form $-A^*A$. In Section \ref{section:decomposition}, we relax this assumption on the generator of the dynamics and produce analogous results to those in Section \ref{sec:3} , this time simply splitting the generator into its symmetric and antisymmetric part (again, symmetric and antisymmetric in $\ltm$) and assume that the symmetric part is linear, while the antisymmetric part is allowed to be non-linear. 
	While the main result of Section \ref{section:decomposition} is inspired by techniques used in large deviation theory, and in particular the approach of the proof is quite different from the one used in Section \ref{sec:3}, again no explicit use of LDP is made. 
	On this matter of linear vs non-linear generators, we point out that GENERIC, in the way we present it in Section \ref{Sec2} (which is a slight modification of the presentation in \cite{kpelmath}), is perfectly well adapted to include both linear and nonlinear dynamics.  The hypocoercive form \eqref{FPhypocoercive} is instead  typically only used for linear evolutions. Extensions of the HT to non-linear generators certainly exist, see e.g. \cite[Part III]{Villani}.  For purposes different from those of this paper it would be important to compare such non-linear formulations of HT with GENERIC; this will be subject of future work.
	 For the time being we point out that, remarkably, the same orthogonality condition \eqref{ortintro} which plays an important role in GENERIC, is also explicitly used in Villani's memoir (see \cite[Remark 38]{Villani}) to simplify verification of some of the (many) assumptions needed there.

	Coming back to LDPs,  in view of the fact that in the reversible case the gradient flow structure prescribes the form of the LDF (and viceversa), it is natural to ask whether a similar connection can be established in the non-reversible case. For this reason, in Section \ref{sec:4}, we consider generalised-reversible processes, which are a class of non-reversible processes, and we characterize the form of the LDF of generalised reversible processes. The only other result in this spirit which we know of is contained in \cite{kpelmath}, see also \cite{RZ2021,KJZ2018,Bouchet} {and a recent preprint \cite{Patterson2021} for similar works in this direction  for some other non-reversible processes (but these papers employ a different decomposition, namely decomposing fluxes/forces instead of the generator, and do not make connections to the HT)}.
	
	Finally, in Section \ref{sec:sec6} we show how the results of this paper can be applied to diffusion processes, Section \ref{Diffusion processes}, and to a class of PDMPs, Section \ref{sec:PDMPs}, the so-called Hamiltonian Piecewise Deterministic Markov Chain Monte Carlo processes (Ham-PD-MCMC). We consider diffusion processes for purely expository purposes, to show in a simple setting how to use the results of this paper,  but all the results of Section 	\ref{Diffusion processes} are known already, though perhaps not in the perspective in which we present them here. The PDMPs considered in Section \ref{sec:PDMPs} are considerably less standard; while the HT for Ham-PD-MCMC  has already been developed e.g. in \cite{Andrieu}, to the best of our knowledge GENERIC has not been applied to such processes yet. So, we first cast Ham-PD-MCMC in pre-GENERIC form and then we show that such dynamics constitute an example for which the orthogonality condition \eqref{ortintro} does not hold, and yet one is able to show entropy decay.

	To summarize, the paper is organised as follows: in Section \ref{Sec2}  we first clarify our setting, notation and standing assumptions (Subsection \ref{subsec:setup}); we then give a concise exposition of the HT, gradient flows and GENERIC, in Subsection \ref{subsec: hypocoerc} and Subsection \ref{subsec:GENERIC}.  We also point out that, while GENERIC is usually formulated on manifolds, we rephrase it here in a function space setting, mostly for ease of comparison with HT. {Besides this small modification, the content of Section \ref{Sec2} is well-known.}  In Section \ref{sec:3} we investigate the relation between hypocoercive and GENERIC formulation of the dynamics in the case in which the generator $\cL$ is linear. {The content of this section and of all subsequent sections (with the exception of Section \ref{Diffusion processes}) is new, to the best of our knowledge.} In Section \ref{section:decomposition}  we relax the class of generators for which it is possible to establish this connection and consider a class of non-linear operators. In Section \ref{sec:4} we characterize the structure of Large Deviation functionals of generalised-reversible Markov Processes. Finally, in Section \ref{sec:sec6} we apply the results of this paper to two classes of Markov processes, namely diffusion processes and PDMPs. 
	Overall, because this paper makes use of the work of two very different communities, we have tried to make it as self-contained as possible.

	\section{Background: Hypocoercivity and  Generic}\label{Sec2}
	
	In this section we recall the main facts about the  Hypocoercivity theory and GENERIC.  
	\subsection{Setup, Notation and preliminaries}\label{subsec:setup}
	Both the HT and GENERIC address the study of (stochastic) dynamics, more precisely of their associated Kolmogorov and/or Fokker-Planck PDEs.  Unless otherwise stated, all the stochastic processes $\{X_t\}_{\{t \geq 0\}}$  considered in this paper will be time-homogeneous Markov evolutions with finite dimensional state space; to fix ideas we will assume that the state space is $\R^d$.
	We recall that the semigroup $P_t$ associated with the Markov process $\{X_t\}_{\{t \geq 0\}}$ is defined on the set of functions $f:\R^d \rightarrow \R$ which are continuous and bounded by $(P_tf)(x):=\mathbb E [f(X_t) \vert X_0=x]$. Given $f$,  $(P_tf)(x)$ is a real- valued function of $(t,x)\in \R_+\times \R^d$; by Ito formula, such a function 
	solves a differential equation of the form 
	\be\label{kolmgen}
	\pa_t u(t,x) = \cL u (t,x), \quad u(0,x)=f(x),
	\ee
	where $\cL$ is an appropriate operator (e.g. a second order differential operator in the case in which $X_t$ is a diffusion process), called the Kolmogorov operator of the process $X_t$; correspondingly,  the differential equation \eqref{kolmgen} is called the {\em Kolmogorov equation} (KE) associated with $X_t$. 
	The dual equation, i.e. the equation
	\be\label{FPeqnsec2}
	\pa_t \rho(t,x) = \cL'\rho(t,x), \quad \rho(0,x) = \rho_0(x), 
	\ee
	where $\cL'$ is the (formal) $L^2$-adjoint of $\cL$, is referred to as the {\em Fokker-Planck equation} (FP) associated with $X_t$ and it describes the evolution of the law of $X_t$. A measure  $\mu$ on $\R^d$ is {\em invariant for $P_t$} (or, equivalently, invariant for $X_t$) iff
	\be\label{invmeasPt}
	\int_{\R^d} (P_t f)(x) \mu(x) dx = \int_{\R^d} f(x) \mu(x) dx \quad \mbox{for every } f \in C_b(\R^d) \,.
	\ee

	There are various ways of specifying the domain of the (typically unbounded) operator $\cL$.  If there exists an invariant measure then the semigroup extends to a strongly continuous semigroup on $\ltm$;  in this case  $\cL$ is the generator of such a semigroup and we take as a domain for $\cL$ the domain $\cD_2({\cL})$ of $\cL$ viewed as generator of $\cP_t$ in $\ltm$, see \cite[Definition 1.5]{GuionnetZegarlinski}.  
	 \footnote{We clarify that  we are abusing notation by denoting by $\cL$ both the Kolmogorov operator and the generator of the semigroup, as  these two objects only formally coincide; indeed the former is just a formal expression given by the It\^o formula and most complications in this context arise precisely from trying to reconcile the two objects, i.e. from finding appropriate domains of definition see \cite[Chapter 1 and Chapter 3]{bakry2013analysis} for details.}   One can show that  \eqref{invmeasPt} is  equivalent to (see \cite[Section 1.2]{GuionnetZegarlinski})  $\int({\cL} f)(x) \mu(x) dx =0$ for every $f \in \cD_2({\cL})$. In short, this can be written as  
	\be\label{invmeasLprime}
	\cL'\mu  =0\,. 
	\ee

	For simplicity, we make the following {\bf standing assumptions}, which will hold throughout the paper,  unless otherwise specified:
	\begin{hypothesis}\label{SA} Standing assumptions:
		\begin{enumerate}[label=\textbf{\textup{[SA.\arabic*]}}]
			\item All the probability measures  which we consider have a density with respect to the Lebesgue measure  and, with abuse of notation, we denote the measure and its density with the same letter, i.e. $\mu (dx) = \mu(x) dx$.\label{SA2}
			\item  We only consider {\em ergodic} Markov processes, i.e. the underlying process $X_t$ admits a unique invariant measure, $\mu$, which has a (smooth enough) density with respect to the Lebesgue measure.  \label{SA1} 
			\item We assume that $\cL'$ and the initial datum $\rho_0$  are such that the FP equation \eqref{FPeqnsec2} admits strictly positive classical solutions, which additionally belong to $L^2(\R^d) \cap L^1(\R^d)$.   \footnote{Note that typically Fokker-Planck type equations preserve positivity, see \cite{BogKrylRoeckner}}   \label{SA3}
			\item The operator $\cL$ is the generator of a strongly continuous contraction semigroup on a Hilbert space $\cH$ (which will typically be $\ltm$). 
		\end{enumerate}
	\end{hypothesis}
	
	Let us motivate  the above assumptions and point out explicitly what they imply. 
	
	$\bullet $ Under Assumptions \ref{SA2} and \ref{SA1}, equation \eqref{invmeasLprime} has a unique classical solution;   without loss of generality, we can then assume $\mu(x)>0$ for every $x\in\R^d$ and we shall do so throughout. This will allow us to consider the equation solved by the function $h(x,t) = \rho(x,t) \mu(x)^{-1}$, which we call the {\em modified Kolmogorov equation} (m-KE)---we will be more precise on the relation between KE, FP equation and m-KE in Note \ref{lem: KE vs FPE} below. 
	Let us also recall that if $\mu$ is the unique invariant measure for the semigroup then it is an ergodic measure.  If $\cL$ is linear and  the semigroup is stochastically continuous (\cite[Section 2.1]{daPrato}) this implies that the kernel of the operator $\cL$ in $\ltm$ is made only of functions which are $\mu$-a.s. constant.\footnote  {The fact that $\cL {\bf 1} =0$ follows simply by the Hille-Yoside Theorem. Viceversa, the ergodicity of the measure is equivalent (see \cite[Theorem 3.2.4]{daPrato}) to either one of the following statements: 
		i) if $f \in L^2_{\mu}$ then
		$
		P_tf =f \, \Rightarrow \, f={\mbox{constant}}, \,  \mu-a.s.$;
		ii) if $f \in \cD_2(\cL)$, then
		$
		\cL f =0 \, \Rightarrow \, f={\mbox{constant}}, \,  \mu-a.s.$. 
		To see that statement i) and ii) are equivalent use \cite[equation (1.1.6)]{GuionnetZegarlinski} and recall that the domain of the generator is dense in $\ltm$.}
	
	$\bullet $  In general, the solution of the m-KE and of the FP equation don't lie in the same space; it is usually more natural to study the KE or the m-KE  in the weighted space $\ltm:=\ltm(\R^d;\R):=\{f:\R^d \rightarrow \R \mbox{ s.t. } \int f^2(x) \mu(dx)< \infty\} $,   and the FP equation in $L^2:=L^2(\R^d;\R)$ (or in $L^2$ weighted with an appropriate polynomial or, better yet, simply in measure-space,  see for example \cite[Section 2.4]{Villani}). For this reason we introduce two Hilbert spaces of real-valued functions, $(\cH, \|\cdot\|, \lan \cdot, \cdot \ran)$ {and $(\cZ, ((\cdot)), (\cdot, \cdot ) )$, to be thought of as  $\ltm$ and $L^2$, respectively. In particular, unless otherwise specified, $(\cdot, \cdot)$ will denote $L^2$ scalar product (or duality pairing, we will specify which in context, when needed).} For the KE and the m-KE we don't necessarily restrict to considering classical solutions and, since we assume that $\cL$ generates a strongly continuous contraction semigroup in $\ltm$, the semigroup itself is the solution in $\ltm$ of the KE. 
	
	$\bullet $	Technical matters about Kolmogorov and Fokker-Planck equations are often quite  involved (see e.g. \cite[Chapter 3]{bakry2013analysis}) and are better treated on a case by case basis; GENERIC is unsurprisingly plagued by the same problems  (see e.g. \cite[Remark 1.1]{DuongPeletierZimmer2013} and references therein).   In this paper  we don't restrict to a particular class of Markov processes (e.g. we don't restrict to diffusion processes); on the contrary, we would like to include a broad range of Markov dynamics (see examples in Section \ref{sec:sec6}) and at the same time we want to avoid encumbering the exposition with  excessive technicalities.
	For this reason, we assume the existence of an appropriate set $\dense$ which, unless otherwise stated,  is dense both  in $\cD_2(\cL) \cap \cD_2(\cL^*)$  and in the space $\csol$:=$\{$ positive classical solutions of the FP equation which live in $L^2 \cap L^1$ $\}$.  In practical examples $\dense$  can often be taken to be the set of (positive) Schwartz functions;  this way we can perform all our calculations on $\dense$.   In this sense -- i.e. in the sense that we only work on $\dense$ --  some calculations in this section and the next will be somehow formal. We flag up now and we will come back to this (see Note \ref{note:how to recover the metric}) that even doing this does not completely solve the problem for GENERIC and some remarks will be necessary.  In the examples of Section \ref{sec:sec6} we will give indications on the correct functional framework.

	For any operator or functional, say $\mathcal T$, $\cD(\mathcal T)$ denotes a domain of definition of $\mathcal T$ \footnote{When we want to refer to a specific domain we will do so, see e.g. the difference between $\cD_2$ introduced earlier in this section and $\cD_b$, used in Section \ref{sec:4}. Otherwise $\cD$ generically denotes a set on which the operator is well defined. } and we will use interchangeably $\mathcal T h$ and $\mathcal T(h)$ to denote the action of $\mathcal T$ on an element $h \in \cD(\mathcal T)$. Moreover, for any function $f(t,x)$ depending on both time and space, the notations $f_t(x)$ and $f(t,x)$ will be used interchangeably.  With this  in mind we clarify the relation between the FP equation, the KE and the m-KE.
	
	\begin{Note}[relation between KE, m-KE and FP equation]
		\label{lem: KE vs FPE}
		\textup{
			Suppose that the Kolmogorov operator $\cL$ in \eqref{kolmgen} is of the form 
			$$\cL = B+ \tilde{A}, $$ 
			where $B$ and $\tilde{A}$ are linear operators  with $B$ antisymmmetric in $\ltm(\R^d)$ and $\tilde{A}$ symmetric in $\ltm(\R^d)$. We also assume that $B$ enjoys the chain and product rule.  Let $\rho_t$ be the solution of the FP equation associated with the process,
			$
			\pa_t \rho_t = \cL' \rho_t \,, 
			$
			and set  $\rho_t = h_t \mu$. Then the function $h_t$ solves the m-KE, namely
			\be\label{m-KE}
			\pa_t h_t = \mko  h_t, \quad \mbox{where } \,\,\mko = (-B + \tilde{A}) \,.
			\ee}
		\textup{To see the above it suffices to show what follows:
			\begin{equation}
				\label{eq: KE vs FPE}
				\cL'(h_t \mu)= \mu \,  (\mko h_t).
			\end{equation}
			Indeed if \eqref{eq: KE vs FPE} holds, we then have 
			$$
			\pa_t \rho_t = \mu \pa_t h_t =	\cL'\rho_t=\cL'(h_t \mu) = \mu \,  \mko h_t \,,
			$$
			which directly implies \eqref{m-KE}. 
			To show \eqref{eq: KE vs FPE}, notice that for every $f,h \in \dense$, we have 
			\begin{align*}
				& \int B'(h\mu)(x)f(x)\,dx=\int h(x)\mu(x) (B f)(x)\,dx=-\int f(x)(B h)(x)\mu(x)\,dx,
			\end{align*}
			and, similarly, 
			\begin{align*}
				&\int \tilde{A}'(h\mu)(x)f(x)\,dx=\int h(x)\mu(x) (\tilde{A} f)(x)\,dx=\int f(x)(\tilde{A} h)(x)\mu(x)\,dx,
			\end{align*}
			which imply  
			\be\label{xx}
			B'(h\mu)=-\mu B h  \quad \mbox{ and }  \quad  \tilde{A}'(h\mu)=\mu\tilde{A} h \, ,
			\ee
			from which \eqref{eq: KE vs FPE} follows, as $\cL'=B'+\tilde{A}'$. 
			Note that, from \eqref{eq: KE vs FPE} one can formally see that in this setting $\mu$ is a solution of $\cL' \mu =0$ if and only if the kernel of $\mko$ contains constants. In short, 
			\be\label{invmeasiffL1=0}
			\cL' \mu = 0 \Leftrightarrow \mko {\bf 1} =0,
			\ee
			where ${\bf 1}$ denotes the function identically equal to one.} 
	\end{Note}
	\smallskip
	
	We recall  that if  $\cS$ is a functional on the Hilbert space  ($\cZ$ $( \cdot, \cdot ), (( \cdot ))$), $\mathcal S:\cD(\cS)\subseteq\cZ \rightarrow \R$, the (directional) Gateaux derivative of $\cS$ at $h$ in the direction $g\in \cZ$ is given by
	\be\label{varderdef}
	( \vd\S (h) , g ) = \frac{d}{d \epsilon} \S (h+ \epsilon g) \Big \vert_{\epsilon = 0} \,.
	\ee
	The Frechet derivative of $\S$ at $h$ is the linear functional $\vd \S(h)$ mapping $g$ into the directional derivative of $\S$ at $h$ in the direction $g$. When $\cS$ is defined on the whole Hilbert space $\cZ$ then, for each fixed $h$, one can view $\vd \cS(h)$ as an element of $\cZ$ and on the LHS of \eqref{varderdef} the notation $(\cdot, \cdot)$ is an actual scalar product. 
	This is easier to see in finite dimensions. If $\cZ$ is finite dimensional, i.e. if $\cZ = \R^d$ and $f:\R^d \rightarrow \R$, then the Frechet derivative of $f$ coincides with the differential and it will be denoted, as customary, by $\nabla f$. In this case, for each $x \in \R^d$, $\nabla f(x) \in \R^d$ while the map $\nabla f : \R^d \rightarrow \R^d$ may be viewed as an element of $\cZ^d:= \{u=(u^1 \dd u^d): u^j \in \cZ \mbox{ for all } j\}$.
	However in practical examples, for our choice of the infinite dimensional space $\cZ$ (typically $\cZ=L^2$) it is almost never the case that $\cS$ is defined on the whole of  $\cZ$  and, for fixed $h \in \cZ$, $\vd \cS$ is not in $\cZ$ but in a bigger space. In this case the scalar product of $L^2$ ($\cZ$) will only act as a formal duality pairing, see Subsection \ref{subsec:GENERIC}. 
	
	We will use interchangeably the notation $\nabla \cdot$ and $\div$ for the divergence of a $\R^d$-valued function. 
	The Euclidean norm of $\R^d$ will be denoted by $\vert\cdot \vert$.


	\subsection{Hypocoercivity}\label{subsec: hypocoerc}
	In this section we  fix   $\cH:= \ltm(\R^d)$, where $\mu$ is assumed to be the only invariant measure of the process with generator $\cL$, FP operator $\cL'$ and modified Kolmogorow operator $\mko$. We say that a Markov evolution is  in {\em linear hypocoercive form } if its generator $\cL$ is linear and can be written in the form 
	\be\label{m-KElinearhypocform}
	\cL = B  - A^*A, 
	\ee
	where  $A: \dense \rightarrow (\ltm)^d $ and $B: \dense \rightarrow \ltm $ are  linear  operators, $A$ obeys the chain rule,    $B$ is antisymmetric in $\ltm$, i.e. $B^*=-B$, and obeys both chain and product rule.  
	We recall that under our standing assumptions $\ker \cL = \{constants\}$. Because $A^*$ denotes adjoint in $\ltm$, $A^*A$ is necessarily symmetric in $\ltm$.  \footnote{ We clarify that $A$ should be thought of as  a $d$ - dimensional vector of operators $A= (A_1 \dd A_d)$, e.g. $A=\nabla = (\pa_{x_1}, \dots, \pa_{x_d})$. We also adopt the same understanding as in \cite{Villani}, namely $A^*A$ is a short notation for $A^*A= \sum_1^d A_i^*A_i$.}
	
	If $\cL$ is of the form \eqref{m-KElinearhypocform} the m-KE \eqref{m-KE} takes the form \be\label{m-KEactual}
	\pa_t h_t = - (B+ A^*A) h_t \,.
	\ee
	Let us look at the two parts of the dynamics: along the flow generated by $-B$, i.e. along the solution of 
	the equation $\pa_t f_t = {-}B f_t$, the norm is conserved; indeed, since $B$ is antisymmetric in $\ltm$, $\langle Bf, f\rangle =0$ for every $f \in \dense$, hence
	$$
		\pa_t \| f_t\|^2 = -2 \langle B f_t, f_t\rangle =0. 
	$$
	On the other hand, along the flow generated by $-A^*A$, the norm is dissipated:
	\be\label{pre20}
	\pa_t \| f_t\|^2 = -2 \langle  f_t, A^*Af_t\rangle = -2 \| A f_t\|^2 \leq 0\,. 
	\ee
Overall, with similar calculations,  the $\ltm$- norm of $h_t$ is dissipated:
	\be\label{dissipationofnorm}
	\pa_t \| h_t\|^2 = -2 \| A h_t\|^2 \leq 0\,.
	\ee
	For later comparisons with GENERIC we observe that this formalism does not imply the existence of any conserved quantity. Note that if the generator is in hypocoercive form then 
	\be\label{kerl-kerB int KerA}
	\ker \cL = \ker \mko = \ker A \cap \ker B\,,
	\ee
	see \cite[Proposition 2]{Villani}.
	
	We gather in the next lemma some straightforward facts which will be useful later on.
	\begin{lemma}\label{lemma:B*=B'=-B}
		With the notation introduced so far, let   $W: \dense \rightarrow \ltm$ be a linear operator such that $W$  and $W'$    satisfy the chain and product rule (e.g., a first order differential operator) and both $W'$ and $W^*$ are well-defined on $\dense$.  
		\begin{itemize}
			\item[i)] If  $W'\mu =0$, \footnote{Here we are implicitly assuming that $\mu$ is smooth enough that the operator $W'$ can be applied to $\mu$.}  then 
			$$
			W'h=W^*h= -Wh, \quad \mbox{for every } h \in \dense,  
			$$
			i.e. $W$ is antisymmetric both in $L^2$ and in $\ltm$;
			\item[ii)] If the generator is in hypocoercive form  \eqref{m-KElinearhypocform} and $\ker \cL = \{constants\}$ then $B'\mu =0$. By using point i), if both $B$ and $B'$ satisfy chain and product rule,  this implies $B'=B^*=-B$ and hence also $B \mu =0$. 
		\end{itemize} 
	\end{lemma}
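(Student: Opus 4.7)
The proof exploits the product rule for $W$ and $W'$ together with the hypothesis $W'\mu=0$, used in the form of the $L^2$ adjoint identity. For $f,h\in\dense$, I would evaluate $\int W(hf)\mu\,dx$ in two different ways. On one hand, by the definition of the $L^2$ adjoint,
\[
\int W(hf)(x)\,\mu(x)\,dx=\int h(x)f(x)\,W'\mu(x)\,dx=0 .
\]
On the other hand, by the product rule for $W$,
\[
W(hf)=(Wh)f+h(Wf), \quad\text{hence}\quad \int W(hf)\mu\,dx=\lan Wh,f\ran+\lan h,Wf\ran ,
\]
where $\lan\cdot,\cdot\ran$ is the $\ltm$-scalar product. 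This delivers $W^*=-W$ on $\dense$. Next, to show $W^*=W'$ on $\dense$, I would apply the $L^2$ adjoint identity to the pair $(h,f\mu)$ and use the product rule for $W'$, which combined with $W'\mu=0$ gives $W'(f\mu)=(W'f)\mu+f\,W'\mu=(W'f)\mu$. Therefore
\[
\lan Wh,f\ran=\int (Wh)\,(f\mu)\,dx=\int h\,W'(f\mu)\,dx=\lan h,W'f\ran ,
\]
so $W^*=W'$ on $\dense$. Combining the two identities yields $W'h=W^*h=-Wh$.

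\textbf{Plan for ii).} I would first extract the identity $B\mathbf{1}=0$ and then deduce $B'\mu=0$ from it by exploiting the antisymmetry of $B$ in $\ltm$. Applying Note \ref{lem: KE vs FPE} with $\tilde A=-A^*A$ gives $\mko=-B-A^*A$; by the equivalence \eqref{invmeasiffL1=0} the invariance $\cL'\mu=0$ translates into $\mko\mathbf{1}=-B\mathbf{1}-A^*A\mathbf{1}=0$. Ergodicity (standing assumption) also yields $\cL\mathbf{1}=B\mathbf{1}-A^*A\mathbf{1}=0$. Adding and subtracting these two identities forces separately $B\mathbf{1}=0$ and $A^*A\mathbf{1}=0$. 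Now, the antisymmetry of $B$ in $\ltm$ (built into the hypocoercive form \eqref{m-KElinearhypocform}) gives, for every $f\in\dense$,
\[
\int_{\R^d}(Bf)(x)\,\mu(x)\,dx=\lan Bf,\mathbf{1}\ran=\lan f,B^{*}\mathbf{1}\ran=-\lan f,B\mathbf{1}\ran=0 ,
\]
which by definition of the $L^2$ adjoint is exactly $B'\mu=0$. Finally, invoking part i) with $W=B$ (whose hypotheses $B'\mu=0$ and chain/product rule are now in place) gives $B'=B^{*}=-B$ on $\dense$, and applying this identity to $\mu$ yields $B\mu=-B'\mu=0$.

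\textbf{Main obstacle.} The genuine mathematical content of the proof is very short; the only delicate point is the bookkeeping of domains flagged up in Subsection \ref{subsec:setup}. In particular, $\mathbf{1}$ is typically not in $\dense$ (if $\dense$ is, say, a class of Schwartz functions), so the identities $\cL\mathbf{1}=0$ and $\mko\mathbf{1}=0$ have to be read in the sense of the equivalence \eqref{invmeasiffL1=0} and of the Hille--Yosida footnote on $\ker\cL$ given in Subsection \ref{subsec:setup}; likewise, $B\mu=0$ at the end should be understood in the pointwise/distributional sense once $\mu$ is assumed to be regular enough for $B'$ to act on it. Working consistently on $\dense$ (and upgrading identities by density whenever needed) is exactly the ``formal'' convention the authors have already adopted.
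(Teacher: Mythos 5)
Your proof is correct and follows essentially the same route as the paper. In part i) you derive $W^*=-W$ by applying the product rule to $W(hf)$ and pairing against $\mu$; the paper does the same calculation on the diagonal $h=f=g$ (i.e.\ via $W(g^2)$), which is equivalent by polarization. The identification $W^*=W'$ via the product rule for $W'$ matches the paper's argument exactly. In part ii) the only difference is that you re-derive $B\mathbf{1}=0$ by combining $\cL\mathbf{1}=0$ (ergodicity/Hille--Yosida) with $\mko\mathbf{1}=0$ (invariance of $\mu$), whereas the paper directly invokes the kernel identity $\ker\cL=\ker A\cap\ker B$ from \eqref{kerl-kerB int KerA}; the ensuing derivation of $B'\mu=0$ from $B\mathbf{1}=0$ and antisymmetry in $\ltm$ is then identical. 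Your remark about the domain issues (that $\mathbf{1}\notin\dense$ in general, so $\cL\mathbf{1}=0$ and $\mko\mathbf{1}=0$ must be read in the sense of Subsection~\ref{subsec:setup}) is an accurate and worthwhile clarification that the paper leaves implicit.
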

	\begin{proof}[Proof of Lemma \ref{lemma:B*=B'=-B}]To prove i),  we first prove that $W'h=W^*h$ for every $h \in \dense$; trivially, if $W'\mu =0$ then for every $f,g \in \dense$
		\begin{align*}
			\int_{\R^d} (Wf)(x) g(x) \mu(x) dx & = \int_{\R^d} f(x) (W^*g)(x) \mu(x) dx \\
			& = 
			\int_{\R^d} f(x) W'(g \mu)(x) dx = \int_{\R^d} f(x) (W'g)(x) \mu(x) dx \,.  
		\end{align*}
		We now show $W^*=-W$ (on $\dense$) by proving that $\lan W g, g\ran = \int (Wg)(x) g(x) \mu(x) dx =0$ for every $g \in \dense$. Indeed:   
		$$\lan W g, g\ran = \frac{1}{2} \int_{\R^d} [W(g^2)](x) \mu(x) dx = \frac{1}{2} \int_{\R^d} g^2(x) (W' \mu)(x) dx =0 \,, $$ 
		which  concludes the proof of i). As for ii),  if $\cL {\bf 1}=0$, by \eqref{kerl-kerB int KerA}, $B{\bf 1}=0$ as well, hence
		$$
		0= \int_{\R^d} (B{\bf 1}) \mu(x) g(x) dx =- \int_{\R^d} \mu(x) (Bg)(x) dx = {-}\int_{\R^d} (B'\mu)(x) g(x)  dx \,,\quad \mbox{for every } g \in \dense , 
		$$
		where the first equality comes from using $B^*=-B$.
	\end{proof}
	An example which notoriously satisfies the assumptions of the above theorem is the Liouville operator $B$ discussed in the introduction,  which is indeed antisymmetric both in $L^2$ and in $\ltm$, with $\mu$ the Boltzmann distribution.  
	\begin{Note}\label{note:aim}
		The aim of the hypocoercivity theory  \cite{Villani, Herau} is to study exponentially fast decay to equilibrium when the operator $\cL$ is not coercive in the $\ltm$ norm, but it is instead coercive in a modified norm.  In the HT writing the splitting \eqref{m-KElinearhypocform}   is only a starting point, and  for this reason  we refer to equations driven by operators in the form \eqref{m-KElinearhypocform} as being in linear hypocoercive form; however for the sake of clarity we emphasize that the fact that the operator $\cL$ is in the form \eqref{m-KElinearhypocform} does not mean per se  that it is hypocoercive (see \cite[Section 3]{Villani} for a definition of hypocoercive operator) and hence that exponentially fast decay holds,   as  further quantitative assumptions on $A$ and $B$ are needed to that end. In this paper we compare primarily the structure of the involved equations, so we don't make any such quantitative assumptions. Understanding the meaning of such assumptions in the context of GENERIC will be the object of future work. 
	\end{Note}

	\subsection{Gradient Flows, GENERIC and pre-GENERIC}\label{subsec:GENERIC}
	The classical formulation of  GENERIC  is on  manifolds \cite{kpelmath} (and this is perhaps a more insightful way of understanding the theory). Here, 
	to compare more easily with the HT, we start by reformulating GENERIC and pre-GENERIC in a function-space setting and we work on $\dense \subseteq \cZ$, where again $\cZ$ is to be thought of $L^2$, although the scalar product of $L^2$ will often only act as a formal duality pairing.   We make remarks on why this is the case and on  how this formulation compares with the classical one on manifolds throughout the paper,  see in particular Note \ref{note:how to recover the metric} and Note \ref{note:what Lagrangian be on manifolds}.

	$\bullet $ {\bf Quadratic Gradient flows}.  Let $\{\rho_t\}_{t \geq 0} \subset \dense$ and $\S: \dense \rightarrow \R$.  An evolution equation of the form
	\be\label{quadraticgradientflow}
	\pa_t \rho_t = - M_{\rho_t}\Big(\frac{1}{2}\vd \S (\rho_t)\Big)
	\ee
	is a {\em (quadratic) gradient flow} if
	for each $\rho \in \dense$ s.t. $\rho>0$, $M_\rho: \cD(M_{\rho}) \supseteq \dense \rightarrow \dense$ is a symmetric and positive semidefinite operator, i.e.
	\be\label{positiveM}
	( M_{\rho}(h), g ) = ( h, M_{\rho} (g) ), \quad \mbox{and} \quad 
	( M_{\rho}(g), g ) \geq 0 \quad \forall h,g \in \dense \,,
	\ee
	$\cS$ is Frechet differentiable and  $\vd \cS(\rho) \in \cD(M_{\rho})$ for every $\rho \in \dense$. 
	If
	$M_{\rho} (\cdot) = -2\div (\rho \nabla \cdot)$, then \eqref{quadraticgradientflow} is a {\em Wasserstein gradient flow}, see \cite{Dirr}. If we can write $M_{\rho}$ as 
	\be\label{genWasgradflow}
	M_{\rho} (\cdot) = 2 A'(\rho A(\cdot))\, 
	\ee
	for some operator $A$ and $L^2$-dual $A'$, then we say that \eqref{quadraticgradientflow} is a {\em generalised Wasserstein gradient flow}. Note that  the operator $M_{\rho}$ in \eqref{genWasgradflow} is symmetric and positive definite (we show this explicitly after \eqref{kerBorthogonality}); moreover,  when $A=\nabla$ we recover $M_{\rho} (\cdot) = -2\div (\rho \nabla \cdot)$.
	
	As a consequence of the gradient-flow structure, the {\em entropy} $\cS$ is dissipated along the solution of \eqref{quadraticgradientflow}:
	\be\label{dissSquadratic grad flow}
	\frac{d\cS (\rho_t)}{dt} = (\vd\cS(\rho_t), \pa_t \rho_t) =
	- \Big(\vd  \cS (\rho_t), M_{\rho_t}\big(\frac{1}{2}\vd \S (\rho_t)\big)\Big) \leq 0 \,,
	\ee
	having used the positivity of $M_{\rho}$. We emphasize the analogy between the above calculation and the one in \eqref{pre20}.  
	Finally, there is no natural conserved quantity associated with the flow \eqref{quadraticgradientflow}.

	\begin{Note}\label{note:how to recover the metric}   If $\cZ$ was a general manifold (as in \cite{MielkePeletierRenger2014, kpelmath}) instead of a Hilbert space and $\cS:\cZ \rightarrow \R$  then, by definition \eqref{varderdef} of variational derivative,  $\vd\cS(\rho) \in T^*_{\rho}\cZ$, where $T^*_{\rho}\cZ$ is the cotangent space of $\cZ$ at $\rho$,  and $M_{\rho}:T^*_{\rho}\cZ \rightarrow T_{\rho}\cZ$, where $T_{\rho}\cZ$ is the tangent space of $\cZ$ at $\rho$, for every $\rho \in \cZ$. (Note that any Hilbert space $\cZ$ is a Hilbert manifold and both  $T^*_{\rho}\cZ$ and $T_{\rho}\cZ$  are isomorphic to $\cZ$.)
		This approach is somehow neater and, as we said, possibly more insightful. The difficulty here becomes to first identify the correct manifold $\cZ$ and then describe explicitly $T\cZ$ and $T^*\cZ$. This is not straightforward when e.g. $\cZ=\{$ finite positive Borel probability measures$\}$, see \cite[Section 3]{MielkePeletierRenger2014}. Further issues then arise when introducing dissipation potentials, as we will do in the next subsection on gradient flows, as such functionals are then typically not defined on the whole tangent and cotangent space. To avoid these complications we simply work on a very good set, where everything is well posed. However also this approach needs some extra comments, as it is clear from the example we gave in the Introduction, where $(\vd\cS(\rho))(x,v) = \log \rho + V(x)+1+ v^2/2$, that the function $\vd\cS(\rho)$ does not belong to $L^2$ but for example it belongs to $\ltm$ instead. In that example all would work by choosing $\cD(M_{\rho})$ to be the set of smooth functions in $\ltm$. Hence in \eqref{positiveM} or in \eqref{dissSquadratic grad flow} the scalar product of $L^2$ acts only as a formal pairing, in the sense that, upon making   a good choice of $\cD(M_{\rho})$, all the integrations implied by the formal $L^2$ scalar product make sense and this is how we will understand them throughout (including in the expressions \eqref{legtrans} and \eqref{legtrans1} below). 
	\end{Note}

	$\bullet $ {\bf Gradient flows}. To introduce a more general (non-quadratic) notion of gradient flow,  let us first give the definition of {\em dissipation potential}. For every $\rho  \in \dense$ let $\psi(\rho;\cdot): \cD(\psi_{\rho}) \subseteq\dense \rightarrow \R$ and let $\psi^{\star}(\rho; \cdot): \cD(\psi^{\star}_{\rho}) \supseteq \dense \rightarrow \R$ denote its Legendre transform, namely
	\begin{align}\label{legtrans}
		\psi^{\star}(\rho;\xi) &= \sup_{v } 
		\{(v,\xi) - \psi(\rho;v)\}\, . 
	\end{align}
	We recall that, given a convex, \footnote{ We say that a function $F$ is convex if  $F(\lambda \xi_1+(1-\lambda)\xi_2)\leq \lambda F(\xi_1)+(1-\lambda)F(\xi_2)$ for all $\lambda\in[0,1]$} lower semicontinuous function $F$, not taking the value $-\infty$, we have  $F^{\star\star}=F$, see \cite[Proposition 3.1 and Proposition 4.1]{Ekeland1976} for details. Under such assumptions on $\psi$, we  have    
	\begin{align}\label{legtrans1}
		\psi(\rho; v) = \sup_{\xi } 
		\{(v,\xi) - \psi^{\star}(\rho;\xi)\}\, ,
	\end{align}
	i.e. $\psi^{\star \star}=\psi$ (at least on a good enough set of functions). 
	We will use interchangeably the notation $\psi(\rho;v)$ and $\psi_{\rho}(v)$ (same for $\psis$). 
	We say that the pair $\psi, \psi^{\star}$ is a pair of
	{\em dissipation potentials} if both $\psi$ and $\psi^{\star}$ are strictly convex,  continuously differentiable \footnote{This assumptions can be relaxed. If $\psi^\star$ is not differentiable, we can substitute its derivative by its convex subdifferential, see \cite[Section 2.5]{Mielke2011}} and 
	\be\label{psipsistarzeroatzero}
	\psi(\rho;0)= \psi^{\star}(\rho;0)=0, \quad \mbox{for all } \rho \in \dense \,.
	\ee
	The functions $\psi$ and $\psi^{\star}$ are symmetric if $\psi(\rho;v) = \psi(\rho;-v)$ and $\psi^{\star}(\rho;\xi)= \psi^{\star}(\rho;-\xi)$, for every $\rho, v, \xi$. We recall that, by the definition of  Legendre transform, i.e. by using \eqref{legtrans} and \eqref{legtrans1}, 
	\begin{align}
		& \psi(\rho;0)=0  \,\,\Leftrightarrow \,\, \inf\psi^{\star}(\rho;\cdot) = 0 \Rightarrow \psi^{\star}(\rho;\cdot) \geq 0, \label{pos1}\\
		& \psi^{\star}(\rho;0)=0  \,\,\Leftrightarrow \,\, \inf\psi(\rho;\cdot) = 0 \Rightarrow \psi(\rho;\cdot) \geq 0. \label{pos2}
	\end{align}

	A {\em gradient flow} with respect to a functional $\cS:\dense \rightarrow \R$ and dissipation potentials $\psi, \psi^{\star}$ is an evolution equation of the form 
	\be\label{non-quadratic gradient flow}
	\pa_t \rho_t = \vd_{\xi} \psi^{\star} \left(\rho_t; -\frac{1}{2} \vd\cS(\rho_t) \right) \,.
	\ee
	Equivalently, one can show (see \cite[Section 2.1]{kpelmath}) that an evolution equation $\pa_t \rho_t = \vecfield (\rho_t)$ (where $\vecfield: \cD(\vecfield) \supseteq \dense  \rightarrow \dense$ is an operator) is a gradient flow with respect to $\cS, \psi, \psi^{\star}$ iff
	\begin{equation}\label{gradflowequivdefinition}
		\psi(\rho_t; \vecfield(\rho_t)) + \psi^{\star}\left(\rho_t; -\frac{1}{2} \vd \cS(\rho_t)\right)+ \frac{1}{2} ( \vecfield (\rho_t), \vd \cS(\rho_t))=0 \,  .  
	\end{equation}
	To clarify the notation in the above, on the RHS of \eqref{non-quadratic gradient flow} $\vd_{\xi}$ is the Frechet derivative of $\psis(\rho;\xi)$ with respect to its second argument (then calculated at $-\vd\cS(\rho)/2$). As  we will always regard $\psis(\rho;\xi)$ as a function of $\xi$ for every $\rho$ fixed, we could have dropped the subscript $\xi$, which is there both for clarity and to keep closer to tradition.

	If $\cZ = \R^d$ (which, strictly speaking, is not allowed by our setup, as we assumed that $\cZ$ is a space of real-valued functions) and  $\psis_{\rho}(\xi)= M \xi\cdot  \xi $, where $M$ is a symmetric positive definite matrix,  then \eqref{non-quadratic gradient flow} boils down to \eqref{quadraticgradientflow}, hence the name {\em quadratic} gradient flow for \eqref{quadraticgradientflow}.

	A calculation analogous to \eqref{dissSquadratic grad flow} -- this time exploiting the convexity of $\psis$\footnote{We recall that if $F$ is convex and continuously differentiable then $(x-y) (\vd F(x) - \vd F(y)) \geq 0$ for every $x,y$. } rather than the positivity of $M$ --  allows again to show that the entropy $\cS$ is dissipated along the flow.

	$\bullet $ {\bf pre-GENERIC}. Let $W:\cD(W) \supseteq \dense  \rightarrow \dense$ be an operator and $\cS$ and $\psis$ as above. Then an evolution equation of the form 
	\be\label{pre-GENERIC}
	\pa_t\rho_t = W(\rho_t) + \vd_{\xi}\psis_{\rho_t}\left(-\frac{1}{2}\vd\cS(\rho_t) \right)
	\ee
	is said to be a {\em pre-GENERIC} flow (with respect to $W, \cS, \psi, \psi^{\star}$) if the following degeneracy condition is satisfied:
	\be\label{degconpre-GENERIC}
	(W(\rho), \vd \cS(\rho)) =0 \quad \mbox{for all } \rho \in \dense \,.
	\ee
	Equivalently (see Appendix \ref{app:equiv}) the evolution equation $\pa_t \rho_t = \vecfield (\rho_t)$ is a {\em pre-GENERIC} flow (with respect to $W, \cS, \psi, \psi^{\star}$) iff  \eqref{degconpre-GENERIC} holds and 
	\begin{equation}\label{equivdefpre-generic}
		\psi(\rho_t;\vecfield(\rho_t) - W(\rho_t)) + \psi^{\star}\left(\rho_t;-\frac{1}{2}  \vd \cS(\rho_t) \right) + \frac12(\vecfield(\rho_t), \, \vd \cS(\rho_t))=0 \,.
	\end{equation}
	\begin{Note}\label{note:variousondissip}
		Some comments on the above definition
		\begin{description} 
			\item[i)]If the gradient flow part of \eqref{pre-GENERIC} is quadratic, i.e. if the evolution is given by 
			\be\label{quadratic pre-generic}
			\pa_t \rho_t = W(\rho_t)- M_{\rho_t} \left(\frac{1}{2}\vd \cS(\rho_t)\right), 
			\ee
			still appended with the condition \eqref{degconpre-GENERIC}, then we talk about {\em quadratic pre-GENERIC}; if the flow is {quadratic pre-generic} with $M_{\rho}$ given by \eqref{genWasgradflow}, then we talk about {\em generalised Wasserstein pre-GENERIC}.
			\item[ii)]If \eqref{pre-GENERIC} and \eqref{degconpre-GENERIC} hold, then
			 $\cS$ decays along the flow. However, if the purpose is to have entropy dissipation, then \eqref{degconpre-GENERIC} can be replaced with the following dissipativity condition
			 \be\label{dissipcond}
			 (W(\rho), \vd \cS(\rho)) \leq 0 \quad \mbox{for all } \rho \in \dense \,.
			 \ee
			Also from the point of view of the equation structure, one can dispense with the orthogonality condition \eqref{degconpre-GENERIC} at the price of modifying \eqref{equivdefpre-generic}. Indeed one can see (see Appendix \ref{app:equiv})  that the flow $\pa_t \rho_t = \vecfield (\rho_t)$ is of the form \eqref{pre-GENERIC} if and only if
			\begin{equation}\label{equivdefpre-genericnoorthogonality}
				\psi(\rho_t;\vecfield(\rho_t) - W(\rho_t)) + \psi^{\star}\left(\rho_t;-\frac{1}{2}  \vd \cS(\rho_t) \right) + \frac12(\vecfield(\rho_t) - W(\rho_t), \, \vd \cS(\rho_t))=0 \,.
			\end{equation}
		However  entropy decay does not necessarily hold if \eqref{equivdefpre-genericnoorthogonality} holds in place of \eqref{equivdefpre-generic}-\eqref{degconpre-GENERIC}.  If \eqref{equivdefpre-genericnoorthogonality} is appended with the dissipativity condition   \eqref{dissipcond} 
			then the entropy functional does decrease along the flow. Indeed,  using the non-negativity of $\psi$ and $\psi^{\star}$ (see \eqref{pos1}-\eqref{pos2}), \eqref{equivdefpre-genericnoorthogonality} and \eqref{dissipcond}, one has
			\begin{align*}
				\frac{1}{2}\frac{d}{dt}\cS(\rho_t)&=\frac{1}{2}\left(\dot{\rho_t},\vd \cS(\rho_t)\right)=\frac{1}{2}\left(\vecfield(\rho_t),\vd\cS(\rho_t)\right)
				\\&=-\psi(\rho_t,\vecfield(\rho_t)-W(\rho_t))-\psi^*(\rho_t,-\frac{1}{2}\vd \cS(\rho_t))+\left( W(\rho_t), \frac{1}{2} \vd \cS(\rho_t)\right) \leq 0\,.
			\end{align*}
			Hence $\cS$ is a Lyapunov functional of the dynamics. 
			\item[iii)]Since $\psi^{\star}$ is assumed to be symmetric and $\psi$ is its Legendre dual, then also $\psi$ is symmetric. 
		\end{description}
	\end{Note}

	\subsection{The many meanings of  the word ``reversibility"}\label{subsec:reversibility}
	It is rather unfortunate that the word ``reversible" is used by different communities  with substantially opposite meanings. This misunderstanding becomes particularly confusing in the context of this paper, so we make some clarifications but refer the reader to \cite{Pavliotis} and \cite{Lelievre2010} for complete statements and proofs. A continuous-time stochastic process $\{X_t\}_{t\geq 0}$ is called {\em time-reversible} or simply {\em (microscopically) reversible} if its law is invariant under time-reversal, i.e. if for every fixed $T>0$ the process $\{X_t\}$ and the process $\{X_{T-t}\}$ have the same distributions on the space of continuous paths. Intuitively, this can be interpreted as follows: the process is time-reversible if by watching a movie of the process run forwards and then backwards, we would not be able to distinguish the two.   The process is stationary if for every $\tau \in \R$ the process $\{X_t\}$ and the process $\{X_{t+\tau}\}$ have the same finite dimensional distributions.  A time- reversible process is stationary but the converse is not true in general. Importantly to our purposes, a stationary diffusion process with invariant measure $\mu$ is reversible if and only if its generator is self-adjoint in $\ltm$ (see \cite[Theorem 4.5]{Pavliotis}) i.e. iff
	\be\label{detbalance}
	\int_{\R^d} f(x) (\cL g)(x) d\mu(x) = \int_{\R^d} g(x) (\cL f)(x) d\mu(x), \quad \mbox{for every } f,g, \in \cD_2(\cL). 
	\ee
	When the above holds we also say that $\mu$ satisfies {\em detailed balance} with respect to $\cL$. If $\cL$ generates a strongly continuous contraction semigroup, by the Lumer-Phillips Theorem \cite[Section 1.4]{Pazy} it is negative as well hence, 
	with a calculation completely analogous to the one leading to \eqref{dissipationofnorm}, one can see that  reversible dynamics are dissipative. However in the acronym GENERIC -- General Equations for Non-Equilibirum Reversible Irreversible coupling -- the term ``reversible"  refers to a different property, enjoyed in particular by Hamiltonian dynamics, which are certainly non-reversible according to the definition we have given above. Indeed, Hamiltonian dynamics are associated with the antisymmetric part of the evolution and hence they are conservative. A simple example can be given by considering planar  rotations (which are Hamiltonian dynamics), for which one  can  distinguish whether the process is running forwards or backward in time by looking at the verse of rotation.  In the acronym GENERIC the term  ``reversible'' refers to flows  $\phi_t$ on $\R^{2d}$, $\phi_t:\R^{2d} \rightarrow \R^{2d}$,  which enjoy the following property:
	$$
	(\phi_t)^{-1}(x,v)=\phi_{-t}(x,v) =(F \circ \phi_t \circ F)(x,v), \quad \mbox{where } F(x,v) = (x, -v)\,,
	$$
	while the word ``irreversible" alludes to the {\em macroscopic} irreversibility of the dynamics, i.e. to dissipativity.  We will avoid using the word ``reversible" at all, but when we do we will mean it in the sense of microscopic time-reversibility.

	
	\section{Relating linear hypocoercivity and Wasserstein pre-GENERIC}\label{sec:3}
	\subsection{From linear hypocoercivity to Wasserstein pre-GENERIC}\label{linhypoc to quadpre-Gen}
	Suppose the m-KE is in linear hypocoercive form \eqref{m-KEactual}, with both $B$ and $B'$ obeying chain and product rule.   In this section we  show that, if this is the case, then  the Fokker-Planck equation for $\rho_t=h_t \mu$ is in Wasserstein pre-GENERIC form \eqref{quadratic pre-generic} with entropy functional $\cS(\rho)$ given by the {\em relative entropy} $\cS_{\mu}(\rho)$ of $\rho$ with respect to $\mu$, i.e.
	\be\label{hypto GEN definitions}
	\cS(\rho) = \cS_{\mu}(\rho):=\int_{\R^d} \log \left( \frac{\rho(x)}{\mu (x)}\right) \rho(x) dx,
\end{equation}
and 
\be\label{hypto GEN definitions2}
M_{\rho} (\xi) = 2 A'(\rho A \xi), \quad W(\rho)= -B \rho\, , 
\ee
where the operators $A$ and $B$ in the above are as prescribed by the hypocoercive structure \eqref{m-KEactual}. 
Indeed, if the m-KE is in the form \eqref{m-KEactual} then 
\be\label{put1}
\partial_t\rho_t=\mu \partial_t h_t=-\mu (B+A^*A)h_t=-\mu B(\rho_t/\mu)-\mu A^*A(\rho_t/\mu).
\ee
For the first term in \eqref{put1}, using the fact that the kernel of $\cL$ is made of constants  and the first equality in \eqref{xx},   we get 
$$
-\mu B(\rho_t/\mu)= -\mu Bh_t = B'(h_t \mu) = -B\rho_t \, ,
$$
hence
\be\label{brho}
\mu B(\rho_t /\mu ) = B \rho_t \,.  
\ee
For the second term in \eqref{put1}, we can see that for every $f \in \dense$ one has
\begin{align}
	\label{eq: integral 1}
	\int_{\R^d} \mu(x) [A^*A(\rho/\mu)](x) f(x) dx &=\int_{\R^d}  [A(\rho/\mu)](x) (A f)(x) \mu(x) dx \nonumber \\
	&=\int_{\R^d} [A'(A(\rho/\mu)\mu)](x) f(x) dx ;
\end{align}
using the fact that $A$  satisfies the chain rule, we can further express the inner integral in \eqref{eq: integral 1} as follows
$$
A(\rho/\mu)=(\rho/\mu) A\left(\log(\rho/\mu)\right).
$$
Substituting the above back into \eqref{eq: integral 1} we then get
$$
\int_{\R^d} \mu(x) [A^*A(\rho/\mu)](x) f(x) dx=\int_{\R^d} A'\left[ \rho A\left(\log(\rho/\mu)\right)\right](x) f(x) dx \,, \quad \mbox{for every } f \in \dense. 
$$
Now notice that if $\cS$ is as in \eqref{hypto GEN definitions} then $\vd \cS (\rho) = \log\left({\rho}/{\mu} \right) +1$. Again by using the fact that $\mathrm{ker} \cL = \{constants\}$ and \eqref{kerl-kerB int KerA}, we have $A \mathbf{1}=0$, hence  $A(\vd \cS (\rho))= A\left(\log\left({\rho}/{\mu} \right) \right)$, from which we deduce
\be\label{put2}
\mu A^*A(\rho/\mu)=A'(\rho A\log(\rho/\mu))=
A'\left[\rho A \left(\vd\cS(\rho)\right)\right] = M_{\rho}\left(\frac{1}{2}\vd\cS(\rho)\right)\,.
\ee
Putting together \eqref{put1}, \eqref{brho} and \eqref{put2}, we have therefore obtained 
\begin{align*}
	\partial_t\rho_t =\cL'\rho_t
	& = -B\rho_t - A'(\rho A(\vd \cS(\rho)))\\
	&= W\rho_t-M_{\rho_t}\left(\frac{1}{2}\vd\cS(\rho_t)\right)\,.
\end{align*}

To show that the above is a pre-GENERIC system we still need to  verify that the degeneracy condition \eqref{degconpre-GENERIC} holds; to this end,  using  the chain rule for $B$, $B{\bf 1}=0$ and $B^*=-B=B'$ (which follows from Lemma \ref{lemma:B*=B'=-B}), we have 
\begin{align}\label{kerBorthogonality}
	\int_{\R^d} (B\rho)(x) [\vd\cS(\rho)](x) dx& = \int_{\R^d} (B \rho)(x) [\log(\rho(x)/\mu(x)) +1] dx \nonumber \\
	& = - \int_{\R^d} \mu(x) [B(\rho /\mu)](x) dx
	- \int_{\R^d} \rho(x) B1 \, dx \nonumber\\
	& = -\int_{\R^d} (B^* \mu)(x) \rho(x)/\mu(x) dx = \int_{\R^d} (B \mu)(x) \rho(x)/\mu(x) dx = 0 \,. 
\end{align} 
It remains to check that $M$ is symmetric and positive semidefinite. In fact,
$$
( M\prho\xi, \xi)=2\left( A'(\rho A \xi),  \xi \right)=2\int_{\R^d} \rho(x) [(A\xi)^2(x)] dx\geq 0,  \quad \mbox{for every } \xi \in \dense, 
$$
as $\rho>0$ (standing assumption \ref{SA3}).
Similarly, for every  $\xi_1, \xi_2 \in \dense$, we have 
$$
(M\prho\xi_1, \xi_2) =2\int_{\R^d} [A'(\rho A \xi_1)](x) \xi_2(x) dx=2\int \rho(x) (A\xi_1)(x) (A\xi_2)(x) dx= ( M\prho\xi_2, \xi_1 )\,.  
$$
\subsection{From Wasserstein pre-GENERIC to linear Hypocoercivity}\label{sec:from pre-Gen to hyp}
Recalling that  $\mu$ is the unique invariant measure of the underlying process $X_t$,  we now assume that the FP equation  is in pre-GENERIC form \eqref{degconpre-GENERIC}-\eqref{quadratic pre-generic}, with $\cS(\rho)=\cS_{\mu}(\rho)$ as in \eqref{hypto GEN definitions} and $W$ is  an operator on $\dense$ such that both $W$ and $W'$ are defined on $\dense$ and obey the chain and product rule; because $W'$ obeys the product rule,   the kernel of $W'$ contains constants, i.e. $W'{\bf{1}}=0$.
Assuming that that FP equation is of the form \eqref{quadratic pre-generic} means that the FP operator $\cL'$ is precisely given by
$$
\cL' \rho=  W \rho -M_{\rho}\left(\frac{1}{2}\vd \cS(\rho)\right)\,.
$$
With this premise, we will show that the dual of $\cL'$, i.e. the Kolmogorov  operator $\cL$,\footnote{We are implicitly assuming that $(\cL')'=\cL$, at least on $\dense$,  similarly for $B$ and $W$.}
can be written in linear hypocoercive form $\cL= B-A^*A$. In particular we will show that $W'$ is antisymmetric in $\ltm$ and it is related to the operator $B$ appearing in the hypocoercive form  by $W'=B$ while $M_{\rho}$ is related to $A$ through the relations \eqref{squareroot}-\eqref{relMA} below.    

\smallskip
We start by showing that $W'$ is antisymmetric. To this end, from the degeneracy condition \eqref{degconpre-GENERIC} 
we have 
\begin{align}\label{ortogdegen2}
	0 = (W(\rho), \vd\cS(\rho)) = \int_{\R^d} \!\!\rho(x) \, [W'(\log(\rho/\mu))](x) dx+ \int_{\R^d} \!\!\rho(x)\,  (W'{\bf 1}) dx = \int_{\R^d} \frac{\rho(x)}{\mu(x)} (W \mu)(x) dx;    
\end{align}
since the degeneracy condition \eqref{degconpre-GENERIC} holds for every $\rho\in \dense$, this implies 
$W \mu =0$.  
Using Lemma \ref{lemma:B*=B'=-B} we conclude that  $B=W'$ is antisymmetric in $\ltm$. Let us notice that by setting $B=W'$ we also have $B {\bf{1}}=0$. 

Regarding the gradient-flow part, because  $M_{\rho}$ is symmetric and positive definite for every $\rho > 0$ fixed,  one can find a square root of $M_{\rho}$, i.e. an operator $M_{\rho}^{1/2}$ such that $(M_{\rho}^{1/2})'M_{\rho}^{1/2} = M_{\rho}$; guided by the calculation in the previous subsection, we assume that one such square root\footnote{As is well known, the square root operator is not unique.} can be written in the form
\be\label{squareroot}
M_{\rho}^{1/2}(\xi):= \sqrt{2\rho} A (\xi), 
\ee
for some  operator $A$ which does not depend on $\rho$ (let us point out that any It\^o diffusion belongs to this category, see Section \ref{Diffusion processes}, and that expressions analogous to the above also appear in \cite{ZimmerCelia2014}). 
With this choice, we have 
$
(M_{\rho}^{1/2})'(\xi) = \sqrt{2}A'(\sqrt{\rho} \xi)$, so that 
\be\label{relMA}
M_{\rho}(\xi) = (M_{\rho}^{1/2})'(M_{\rho}^{1/2}(\xi)) = 2A'(\rho A(\xi)). 
\ee
If $A$  obeys the chain rule then,  with calculations completely analogous to those in the previous subsection, one finds that if $\rho_t$ solves \eqref{quadratic pre-generic} and all of the above assumptions are satisfied, then the m-KE for $h_t=\rho_t/\mu$ can be written in the form \eqref{m-KEactual}.

To conclude, recall that the fact that the kernel of $\cL$ is only made of constants is a consequence of ergodicity, see Section \ref{subsec:setup}. However, in this setup this can anyway be recovered from the structure of the equations. Indeed,  because $W\mu=0$ and $\mu$ is an invariant measure, i.e. a stationary solution of \eqref{quadratic pre-generic}, it follows that $M_{\mu}(\vd\cS(\mu))=0$; if this is the case then, since $\vd\cS(\mu)=1$,  for every $g \in \dense$ one has
$$
0= (M_{\mu}(\vd\cS(\mu)), g) = (M_{\mu}^{1/2}(\vd\cS(\mu)), M_{\mu}^{1/2}(g)) = \int (A{\bf 1}) (Ag) \mu =  \int  (A^*A {\bf{1}}) \, g \mu \, ,
$$
which implies ${\bf{1}} \in \ker(A^*A)$. Because $\ker (A^*A)=\ker A$, one has $A{\bf{1}}=0$. This, together with the fact that $B {\bf{1}} = W' {\bf{1}} =0$ implies $\cL {\bf{1}} = \mko {\bf{1}} =0$. 

\begin{Note}\label{note:linghyptopregen}
	A couple of observations on Section \ref{linhypoc to quadpre-Gen} and Section \ref{sec:from pre-Gen to hyp}. 
	\begin{itemize}	
		\item The calculation in \eqref{kerBorthogonality}  shows that, in the framework of  Section \ref{linhypoc to quadpre-Gen}, i.e. if the generator $\cL$ is in linear hypocoercive form,  the degeneracy condition \eqref{degconpre-GENERIC} is ultimately a consequence of the antisymmetry in $\ltm$ of $B$ and of $B \textbf{1}=0$, which implies $B \mu=0$ (by Lemma \ref{lemma:B*=B'=-B}). Viceversa, in Section \ref{sec:from pre-Gen to hyp} the degeneracy condition implies $W \mu =0$ (hence $B \mu =0$ as well), but it does not seem to imply $W'1=B1=0$ directly.   
		\item On a practical level one can see the content of these subsections as giving explicit formulas to go  from linear hypocoercivity to pre-GENERIC formulation and viceversa. The potential advantage is that one could obtain  some results almost for free; for example, suppose a particular equation is known to have hypocoercive form. Using Section \ref{linhypoc to quadpre-Gen} one can then write it in pre-GENERIC form and hence obtain relative entropy dissipation.
		An indeed a {\bf Corollary} of the content of  Section \ref{linhypoc to quadpre-Gen} is the following: if the generator $\cL$ is in linear hypocoercive form, then the relative entropy is dissipated along the flow of the associated FP equation. 
		\item When $B=0$, a straightforward consequence of Section \ref{linhypoc to quadpre-Gen} is the following: if the generator of the process is self-adjoint in $\ltm$, which is equivalent to the underlying process being reversible, the the Fokker Planck operator is in gradient flow form. In this sense the content of  Section \ref{linhypoc to quadpre-Gen} can be seen as providing a simple proof of the link between reversibility and gradient flow structure. 
		\end{itemize}
	\end{Note}

\section{Relation between pre-GENERIC and symmetric-antisymmetric decomposition}\label{section:decomposition}
In Section \ref{sec:3} we worked with linear Markov generators in hypocoercive form \eqref{m-KElinearhypocform}. 
In this section we slightly relax the class of Markov generators we consider; we work again under the standing assumptions of Hypothesis \ref{SA} but this time we simply split the generator $\cL$ as follows
\begin{equation}\label{eq: decomposition L}
	\mathcal{L}=\mathcal{L}_a+\mathcal{L}_s,    
\end{equation}
where  $\mathcal{L}_s$ and $\mathcal{L}_a$ are, respectively, the symmetric and anti-symmetric parts in  $\ltm$ of $\mathcal{L}$. We will assume that $\cL_s$ is linear, but no such assumption will be needed for $\cL_a$. Moreover, we don't explicitly assume that $\cL_s$ should be of the form $-A^*A$, as in Section \ref{sec:3}. {\footnote{ We note that for less than regularity issues related to the domain of the adjoint operator and for the potential practical difficulty of finding $A$,  any non-negative symmetric operator can be written in the form $-A^*A$. But this might not be easy in practice and indeed this is inconvenient for the PDMPs we study in Section \ref{sec:PDMPs}.}} With the splitting \eqref{eq: decomposition L} in mind,  for every $\rho \in \dense$ we consider the following Hamiltonian functional  $\mathscr{H}(\rho; \cdot):\cD(\mathscr{H}_{\rho})\rightarrow \R$,    
\begin{align}
	\label{eq: decompose H}
	\mathscr{H}(\rho;\xi)&:=\int_{\R^d} e^{-\xi(x)} \left(\mathcal{L} e^{\xi}\right)\!(x)\rho(x) dx\\
	&=\int_{\R^d} e^{-\xi(x)}  \left[(\mathcal{L}_a+\mathcal{L}_s) e^{\xi}\right]\!(x)\rho(x) dx
	=:\mathscr{H}_a(\rho;\xi)+\mathscr{H}_s(\rho;\xi),
	\nonumber 
\end{align}
where, for every $\rho \in \dense$, $\mathscr{H}_s(\rho; \cdot):\cD(\mathscr{H}_{s,\rho})\rightarrow \R$, $\mathscr{H}_a(\rho; \cdot):\cD(\mathscr{H}_{a,\rho})\rightarrow \R$, 
\be\label{HaHs}
\mathscr{H}_a(\rho;\xi):=\int_{\R^d} e^{-\xi} \left(\mathcal{L}_a e^{\xi}\right)d\rho,\quad \mathscr{H}_s(\rho;\xi):=\int_{\R^d} e^{-\xi} \left(\mathcal{L}_s e^{\xi}\right)d\rho \, ,
\ee
and $\cD(\mathscr{H}_{\rho})$ is the set of functions $\xi$ such the integrand in \eqref{eq: decompose H} makes sense  and  the integral in \eqref{eq: decompose H} is finite (similarly for $\cD(\mathscr{H}_{s,\rho}), \cD(\mathscr{H}_{a,\rho})$). We will also make the following technical assumption. 
\begin{assumption}\label{ass:domH}
	For every $\rho \in \dense$, $\cD(\mathscr{H}_{s,\rho})$ is such that for any  $\xi, \bar\xi, \tilde \xi \in \cD(\mathscr{H}_{s,\rho})$, $\xi+\gamma \bar \xi \in \cD(\mathscr{H}_{s,\rho})$ for any $\gamma \in (0,1)$ and  the  integral 
	$$
	\int_{\R^d} e^{-\xi(x)}  \alpha\xi^p(x) \left[\cL_s (\beta\bar\xi^p e^{\tilde \xi})\right](x) \rho(x) \, dx
	$$
	is finite,  for any  $p \in \{1,2\}, \alpha, \beta \in \{0,1\}$.  
\end{assumption}
The above assumption is easily satisfied in practice, as in examples $\cD(\mathscr{H}_{s,\rho})$  will typically contain functions that grow polynomially while $\rho$ decays exponentially fast at infinity. Finally, before stating our main theorem, we recall (see \eqref{equivdefpre-generic}) that the  Fokker-Planck equation $\dot{\rho_t}=\mathcal{L}'(\rho_t)$  is in pre-GENERIC form  with respect to $(\cS,\psi, \psi^{\star},W)$ if and only if 
\begin{equation}
	\label{eq: def of preGENERIC}
	\psi(\rho_t;\mathcal{L}'(\rho_t)-W(\rho_t))+\psi^{\star}\left(\rho_t;-\frac{1}{2}\vd \cS(\rho_t)\right)+\frac{1}{2}( \mathcal{L}'(\rho_t), \vd \cS(\rho_t)) =0 
\end{equation}
and the orthogonality condition \eqref{degconpre-GENERIC} holds. 

{With this in mind, the purpose of Theorem \ref{theorem:decomposition} below is to show that if the symmetric-antisymmetric decomposition  \eqref{eq: decomposition L} holds for the generator then the Fokker-Planck equation can be written in pre-GENERIC form, and vice versa. The theorem also provides a constructive way to pass from one formulation to the other. }

\begin{theorem}\label{theorem:decomposition} With the notation introduced so far, the following holds: 

{ {\bf $\bullet$ From symmetric-antisymmetric decomposition of the generator to pre-GENERIC structure of the Fokker-Planck equation}.}
	Let  $\cL$  be the generator of a time-homogeneous Markov process  with  unique invariant measure $\mu$ and  consider a symmetric-antisymmetric splitting of $\cL$ as in \eqref{eq: decomposition L}
	where we  assume that $\cL_s$ is a linear operator.    Given $\cL$, we consider $\mathscr{H}_s$ as in \eqref{HaHs} and assume that $\mathscr{H}_s (\rho; \cdot)$ is {convex} and  once Frechet differentiable in the second argument, for every $\rho$.  
	For every $\rho \in \dense$ we then let $\Psi^{\star}(\rho; \cdot):\cD(\mathscr H_{s,\rho})\rightarrow \R$ be the functional   
	\begin{equation}
		\label{eq: Psi from Ls}
		\Psi^{\star}(\rho;\xi)=\mathscr{H}_s\left(\rho; \frac{1}{2}\vd \cS_{\mu}(\rho)+\xi\right)-\mathscr{H}_s\left(\rho; \frac{1}{2}\vd \cS_{\mu}(\rho)\right)\, ,
	\end{equation}
	where $\cS_{\mu}$ is the relative entropy between $\rho$ and $\mu$. 
	Suppose  $\cL_s {\bf 1}=0$ and let  Assumption \ref{ass:domH}  and   the following orthogonality condition hold, 
	\be\label{ortogproof}( \mathcal{L}_a'(\rho), \vd \cS_{\mu}(\rho))=0 \, \quad \mbox{for every } \rho \in \dense \,. 
	\ee 
	Then the Fokker-Planck equation $\dot{\rho}=\mathcal{L}'(\rho)$ is a $(\cS_{\mu},\Psi, \Psi^{\star},\mathcal{L}_a')$ pre-GENERIC flow.
	That is, the operator $\cL_s$ is associated with the gradient flow part of the dynamics, i.e.
	\be\label{lsprimoproof}
	\cL_s'\rho = \vd_{\xi} \Psi^{\star}\left(\rho; -\frac{1}{2} \vd \cS_{\mu}(\rho)\right)
	\ee
	and  the flow generated by $\cL'$ can be written as 
	\be\label{splitting1}
	\pa_t \rho_t = W (\rho_t)+\vd_{\xi} \Psi^{\star}\left(\rho_t; -\frac{1}{2} \vd \cS_{\mu}(\rho_t)\right), \quad \mbox{with } W\rho = \cL_a'\rho \,.
	\ee
	
{ {\bf $\bullet$ From pre-GENERIC form of the Fokker-Planck equation to symmetric-antisymmetric splitting of the generator}.} Vice versa, suppose that there exist $\psi, \psi^{\star},W$ and $\cS \in C^1$ such that the Fokker-Planck equation $\dot{\rho}_t=\mathcal{L}'(\rho_t)$ is a $(\cS,\psi, \psi^{\star},W)$ pre-GENERIC system;  that is, suppose 
	\be\label{splitting2}
	\pa_t \rho_t =\cL'\rho_t =  W (\rho_t)+\vd_{\xi} \psi^{\star}\left(\rho_t; -\frac{1}{2} \vd \cS(\rho_t)\right)\,,
	\ee
	with $W$ and $\cS$ such that $(W(\rho), \vd \cS(\rho))=0$ for every $\rho \in \dense$.  
	Let 
	$$
	\tilde{\mathscr{H}}(\rho;\xi):= \psi^{\star}\left(\rho; \xi- \frac12 \vd\cS(\rho)\right) - \psi^{\star}\left(\rho; -\frac12 \vd\cS(\rho)\right)
	$$
	and assume that $\tilde{\mathscr{H}}$ is convex and $\tilde{\mathscr{H}}(\rho;0)=0$ for every $\rho \in \dense$. Assume moreover that $\tilde{\mathscr{L}}$, the Legendre transform of $\tilde{\mathscr{H}}$,  is twice differentiable at the point $(\mu;0)$. 
	Then the operator $\mathcal{L}-W'$ is  symmetric with respect to $L^2_\mu$ and
	$$
	\mathcal{L}=(\mathcal{L}-W')+W'
	$$
	constitutes a symmetric-antisymmetric decomposition of $\mathcal{L}$. 
\end{theorem}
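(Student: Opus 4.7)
My plan is to handle the two directions separately, with the Hamiltonian $\mathscr{H}_s(\rho; \xi)$ (respectively $\tilde{\mathscr{H}}(\rho; \xi)$) playing the role of bridge between the decomposition of $\cL$ and the dissipation-potential formulation of the flow.

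For the first direction, the central step is a first-order expansion of $\mathscr{H}_s$. A direct computation shows that for $\zeta \in \cD(\mathscr{H}_{s,\rho})$,
\[
\vd_\xi \mathscr{H}_s(\rho; \eta)[\zeta] = \int_{\R^d} \bigl[-\zeta\, e^{-\eta}(\cL_s e^\eta) + e^{-\eta}\cL_s(\zeta e^\eta)\bigr]\, \rho\, dx,
\]
and setting $\eta = 0$ the first term vanishes because $\cL_s\mathbf{1} = 0$, while the second reduces to $\int \zeta(\cL_s'\rho)\, dx$, giving $\vd_\xi \mathscr{H}_s(\rho; 0) = \cL_s'\rho$. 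By definition \eqref{eq: Psi from Ls}, this yields \eqref{lsprimoproof}, hence \eqref{splitting1} follows on adding $\cL_a'\rho = W\rho$, and the orthogonality condition is exactly the hypothesis \eqref{ortogproof}. For $\Psi, \Psi^*$ to form a valid pair of dissipation potentials one still needs $\Psi(\rho;0)=0$, which by \eqref{pos1} is equivalent to $\Psi^* \geq 0$, i.e.\ to $\frac{1}{2}\vd\cS_\mu(\rho)$ being a minimizer of $\mathscr{H}_s(\rho;\cdot)$. The critical-point equation $\vd_\xi \mathscr{H}_s(\rho; \frac{1}{2}\vd\cS_\mu(\rho)) = 0$ follows from the same derivative formula by substituting $e^\eta \propto \sqrt{\rho/\mu}$ and invoking the symmetry of $\cL_s$ in $\ltm$; convexity of $\mathscr{H}_s$ then promotes this critical point to a global minimum. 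Assumption \ref{ass:domH} is used throughout to justify differentiation under the integral.

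For the second direction, set $\hat{\cL} := \cL - W'$; the target is to show $\int f\, \hat{\cL}g\, d\mu = \int g\, \hat{\cL}f\, d\mu$ for $f, g$ in a dense class of $\ltm$. The plan is to exploit the symmetry of $\psi^*$ (Note \ref{note:variousondissip}(iii)) to first obtain
\[
\tilde{\mathscr{H}}(\rho;\, \vd\cS(\rho) - \xi) = \tilde{\mathscr{H}}(\rho; \xi) \quad \text{for all } \rho, \xi,
\]
which is precisely the Hamiltonian form of reversibility. If this identity transfers to the natural Hamiltonian $\hat{\mathscr{H}}(\rho; \xi) := \int e^{-\xi}(\hat{\cL} e^\xi)\, d\rho$---with $\vd\cS$ replaced by $\vd\cS_\mu$---then the change of variable $f = (\rho/\mu) e^{-\xi}$, $g = e^\xi$ recasts it as $\int g\, \hat{\cL}f\, d\mu = \int f\, \hat{\cL}g\, d\mu$, and as $\rho, \xi$ range over $\dense$ the pairs $(f,g)$ sweep out a dense class, giving symmetry of $\hat{\cL}$ by density.

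The decisive and most delicate step, which I expect to be the main obstacle, is thus to identify $\tilde{\mathscr{H}}$ with $\hat{\mathscr{H}}$ beyond their trivial agreement at $\xi = 0$, where $\tilde{\mathscr{H}}(\rho;0) = 0 = \hat{\mathscr{H}}(\rho;0)$ and $\vd_\xi \tilde{\mathscr{H}}(\rho;0) = \cL'\rho - W\rho = \hat{\cL}'\rho = \vd_\xi\hat{\mathscr{H}}(\rho;0)$ follow from \eqref{splitting2}. Here the assumption that $\tilde{\mathscr{L}}$ is twice differentiable at $(\mu;0)$ is crucial: via Legendre duality it ensures that $\vd^2_\xi \tilde{\mathscr{H}}(\mu;0)$ is well defined and equal to the inverse of $\vd^2_j \tilde{\mathscr{L}}(\mu;0)$, while a direct computation (using $\hat{\cL}'\mu = 0$ to kill the boundary term $\int \hat{\cL}(\eta\zeta)d\mu$) yields
\[
\vd^2_\xi \hat{\mathscr{H}}(\mu;0)[\eta,\zeta] = -\int_{\R^d}\bigl(\eta\,\hat{\cL}\zeta + \zeta\,\hat{\cL}\eta\bigr)\, d\mu.
\]
Matching Hessians at equilibrium and then linearizing the pointwise-in-$\rho$ pre-GENERIC structure extends the identification $\tilde{\mathscr{H}} = \hat{\mathscr{H}}$ locally in $\dense$; combined with the symmetry identity and the compatibility $\vd\cS \equiv \vd\cS_\mu$ inherited from $\mu$ being the unique stationary solution of \eqref{splitting2}, this delivers the desired symmetry of $\hat{\cL}$.
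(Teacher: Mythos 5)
Your argument for the first direction is essentially the paper's. The key computation $\vd_\xi\mathscr{H}_s(\rho;0)=\cL_s'\rho$ and the use of $\cL_s\mathbf{1}=0$ match the paper exactly. For non-negativity of $\Psi^{\star}$ the paper goes through the symmetry identity $\mathscr{H}_s(\rho;\frac12\vd\cS_\mu(\rho)+\xi)=\mathscr{H}_s(\rho;\frac12\vd\cS_\mu(\rho)-\xi)$ (established via the computation in Note~\ref{sec: alternative computation for H} with $\trans=\mathrm{id}$) and then uses convexity; your route — showing directly that $\frac12\vd\cS_\mu(\rho)$ is a critical, hence by convexity a global, minimizer of $\mathscr{H}_s(\rho;\cdot)$, via the $\ltm$-symmetry of $\cL_s$ — is a slight repackaging of the same computation and works. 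This direction is fine.

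The second direction has a genuine gap, and it is exactly the one you flagged as ``the decisive and most delicate step''. Your step (1) — deriving $\tilde{\mathscr{H}}(\rho;\vd\cS(\rho)-\xi)=\tilde{\mathscr{H}}(\rho;\xi)$ from the symmetry of $\psi^{\star}$, or, as the paper does, from \cite[Theorem~3.4]{kpelmath} — is fine, and your step (3) — that the analogous symmetry for the Feng--Kurtz Hamiltonian $\hat{\mathscr{H}}(\rho;\xi)=\int e^{-\xi}(\hat{\cL}e^\xi)\,d\rho$ of $\hat{\cL}=\cL-W'$, with $\cS$ replaced by $\cS_\mu$, is equivalent to $\ltm$-symmetry of $\hat{\cL}$ via $f=(\rho/\mu)e^{-\xi}$, $g=e^\xi$ — is also correct (this is precisely the first item of Note~\ref{sec: alternative computation for H} read backwards). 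But step (2), promoting the identity from $\tilde{\mathscr{H}}$ to $\hat{\mathscr{H}}$, is never established. Matching values, first derivatives and Hessians at the single point $(\mu;0)$ does not give equality of the two functionals, and your phrase ``linearizing the pointwise-in-$\rho$ pre-GENERIC structure extends the identification locally'' is not an argument. Moreover, the identification $\tilde{\mathscr{H}}=\hat{\mathscr{H}}$ (or even $\cS\equiv\cS_\mu$, which you also assume without justification) is not actually needed — it is overkill. The paper avoids the whole issue: from the reversibility identity for $\tilde{\mathscr{H}}$ it passes to the equivalent property \eqref{eq: relationLdetbalance} for its Legendre transform $\tilde{\mathscr{L}}$, and then invokes \cite[Theorem~3.3]{MielkePeletierRenger2014} (using only the direction that does not require an LDP, as clarified in Note~\ref{Note on decomposition}), which states that this detailed-balance relation on the Lagrangian implies $\ltm$-symmetry of the generator. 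That theorem is the ingredient your proposal is trying to reprove; a self-contained version would have to carry out a second-order expansion around $(\mu;0)$ in the style of the converse part of the proof of Proposition~\ref{prop:irrevlagrangian} (which is why the twice-differentiability hypothesis on $\tilde{\mathscr{L}}$ at $(\mu;0)$ appears), but your sketch does not do this.
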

\begin{proof}
	{We first show that $\Psi^{\star}$ defined in \eqref{eq: Psi from Ls} is a dissipation potential, i.e. that it is non-negative, symmetric, convex in the second argument and attains its minimum at $0$. In fact, the convexity of $\Psi^{\star}$ follows immediately from the convexity of $\mathscr{H}_s$. We will show in   Note \ref{sec: alternative computation for H} (take $\trans$ to be the identity there) that the symmetry in $\ltm$ of  $\mathcal{L}_s$ implies  
		\begin{equation}
			\label{eq: symmetry of Hs}
			\mathscr{H}_s(\rho;\frac{1}{2}\vd\cS_\mu(\rho)+\xi)=\mathscr{H}_s(\rho;\frac{1}{2}\vd\cS_\mu(\rho)-\xi),    
		\end{equation}
		which in turn  implies the symmetry of $\Psi^{\star}$. From the definition  of $\Psi^{\star}$, equation \eqref{eq: Psi from Ls}, we have  
		$$
		\Psi^{\star}(\rho,0)=0\quad\text{and}\quad\vd_\xi\Psi^{\star}(\rho;\xi)=(\vd_\xi \mathscr{H}_s)\left(\rho;\frac{1}{2}\vd \cS_{\mu}(\rho)+\xi\right),
		$$
		thus, 
		$$
		\vd_\xi\Psi^{\star}(\rho;0)=(\vd_\xi \mathscr{H}_s)\left(\rho;\frac{1}{2}\vd \cS_{\mu}(\rho)\right)\quad\text{and}\quad	\vd_\xi\Psi^{\star}\left(\rho;-\frac{1}{2}\vd \cS_{\mu}(\rho)\right)=(\vd_\xi \mathscr{H}_s)(\rho,0).
		$$
		To clarify notation, the RHS of the above formulas is the Frechet derivative of $\mathscr{H}_s$ with respect to its second argument, calculated at $\frac{1}{2}\vd \cS_{\mu}(\rho)$ and $0$ respectively. 
		By taking the derivative with respect to $\xi$ on both sides of \eqref{eq: symmetry of Hs} and then letting $\xi=0$, we get
		$$
		\vd_\xi\Psi^{\star}(\rho;0)=(\vd_\xi \mathscr{H}_s)\left(\rho;\frac{1}{2}\vd \cS_{\mu}(\rho)\right)=0.
		$$
		Using the convexity and differentiability of $\Psi^{\star}$, we have
		$$
		\Psi^{\star}(\rho;\xi)\geq \Psi^{\star}(\rho;0)+(\vd_\xi\Psi^{\star}(\rho;0),\xi)=0, \,\, \mbox{ for every } \rho, \xi \,.
		$$
		We have thus proved that $\Psi^{\star}$ is a dissipation potential. Next we will establish \eqref{splitting1}.}
	For every $\bar \xi \in \cD(\mathscr{H}_{s,\rho})$, we have 
	\be\label{limit}
	( \vd_{\xi} \mathscr{H}_s(\rho;\xi),\bar{\xi})=\lim\limits_{\varepsilon\rightarrow 0}\frac{\mathscr{H}_s(\rho; \xi+\varepsilon \bar{\xi})-\mathscr{H}_s(\rho;\xi)}{\varepsilon}=\int_{\R^d}\Big[-\bar{\xi}e^{-\xi} \mathcal{L}_s e^\xi+ e^{-\xi}\mathcal{L}_s(\bar{\xi}e^{\xi})\Big]\,d\rho.
	\ee 
	We postpone showing how the above expression is obtained and proceed with the main argument. From the above, 
	$$
	( \vd_{\xi} \mathscr{H}_s(\rho;0),\bar{\xi})=\int_{\R^d}\Big[-\bar{\xi} (\mathcal{L}_s \mathbf{1})+ (\mathcal{L}_s\bar{\xi})\Big]\,d\rho=\int_{\R^d} (\mathcal{L}_s\bar{\xi})\,d\rho=(\mathcal{L}_s'(\rho),\bar{\xi}),
	$$
	where we have used $\mathcal{L}_s{\mathbf{1}}=0$. From this, we deduce 
	$$
	\mathcal{L}_s'(\rho)=\vd_\xi \mathscr{H}_s(\rho;0)=
	\vd_\xi\Psi^*\left(\rho;-\frac{1}{2}\vd\cS_{\mu}(\rho)\right)\, , 
	$$
	which is precisely \eqref{lsprimoproof}. That is,  $\cL_s'$ is in gradient flow form; using \eqref{gradflowequivdefinition} and writing $\mathcal{L}_s'(\rho)=\mathcal{L}'(\rho)-\mathcal{L}_a'(\rho)$, we then have
	\begin{align*}
		\Psi(\rho; \mathcal{L}'(\rho)-\mathcal{L}_a'(\rho))+\Psi^{\star}(\rho;-\frac{1}{2}\vd \cS(\rho))=\left(\mathcal{L}'(\rho)-\mathcal{L}_a'(\rho), -\frac{1}{2}\vd  \cS(\rho)\right) \stackrel{\eqref{ortogproof}}{=}\left( \mathcal{L}'(\rho), -\frac{1}{2}\vd \cS(\rho)\right), 
	\end{align*}
	which,  is equivalent to \eqref{splitting1} (see \eqref{equivdefpre-generic}). 
	We are now only left with showing the limit \eqref{limit}. To this end, for some $\theta=\theta(x) \in (0,1)$ we have:
	\begin{align*}
		\mathscr{H}_s(\rho; \xi+\varepsilon \bar{\xi})&=\int_{\R^d} e^{-(\xi+\varepsilon \bar{\xi})}\big[\mathcal{L}_s (e^{\xi+\varepsilon \bar{\xi}})\big]\,d\rho
		\\&=\int_{\R^d} \Big(e^{-\xi}-\varepsilon \bar{\xi}e^{-\xi}+\frac{1}{2} \varepsilon^2 
		(\bar \xi(x))^2 e^{-\xi(x)- \theta \varepsilon \bar\xi(x)}\Big)\big[\mathcal{L}_s \big(e^{\xi}+\varepsilon \bar{\xi} e^{\xi}+\frac{1}{2} \varepsilon^2 
		(\bar \xi(x))^2 e^{\xi(x)+ \theta \varepsilon \bar\xi(x)}\big)\big]\,d\rho
		\\&=\int_{\R^d}\Big(e^{-\xi}\mathcal{L}_s e^\xi+\varepsilon \big[-\bar{\xi}e^{-\xi}\mathcal{L}_s(e^\xi)+e^{-\xi}\mathcal{L}_s(\bar{\xi}e^\xi)\big]\Big)\,d\rho + \varepsilon^2 I
		\\& = \mathscr{H}_s(\rho,\xi) + 
		\varepsilon \int_{\R^d} \big[-\bar{\xi}e^{-\xi}\mathcal{L}_s(e^\xi)+e^{-\xi}\mathcal{L}_s(\bar{\xi}e^\xi)\big]\,d\rho + \varepsilon^2 I\, ,		
	\end{align*}
	where the term $I$ gathers all the terms which are multiplied by a power of $\varepsilon$ bigger or equal than two. In view of Assumption \ref{ass:domH} all the integrals appearing in the above are finite, hence we can take the limit $\varepsilon \rightarrow 0$ and obtain the result.

	Now suppose that \eqref{splitting2} holds true or, equivalently, 
	\begin{equation}
		\label{eq: HT from GENERIC}
		\psi(\rho_t;\mathcal{L}'(\rho_t)-W(\rho_t))+\psi^{\star}(\rho_t; -\frac{1}{2}\vd \cS(\rho_t))+\frac{1}{2}( \mathcal{L}'(\rho_t), \vd\cS(\rho_t))=0,  \quad  ( W(\rho),
		\vd \cS(\rho)) =0 \,\,\,\forall \rho \in \dense \,.
	\end{equation}
	By subtracting $\frac{1}{2}\left( W(\rho_t), \vd\cS(\rho_t)\right)=0$ from the LHS of  \eqref{eq: HT from GENERIC}, we get
	\begin{equation}\label{subtrW}
		\psi(\rho;\mathcal{L}'(\rho)-W(\rho))+\psi^{\star}(\rho;-\frac{1}{2}\vd \cS(\rho))+( \mathcal{L}'(\rho)-W(\rho), \frac{1}{2} \vd\cS(\rho))=0.
	\end{equation}
	{The above implies that the operator $\cL'-W$ is a gradient flow. From
		\cite[Theorem 3.4]{kpelmath} the Hamiltonian $\tilde{\mathscr{H}}$ is reversible with respect to $\cS$, i.e. $\tilde{\mathscr{H}}(\rho; \xi)= \tilde{\mathscr{H}}(\rho;\vd\cS(\rho)-\xi)$ for every $\xi$.  This is equivalent to $\tilde{\mathscr{L}}$, the Legendre transform of $\tilde{\mathscr{H}}$,  satisfying equality \eqref{eq: relationLdetbalance}  (that is, the relation \cite[(2.6)]{MielkePeletierRenger2014}).  This implies, by \cite[Theorem 3.3]{MielkePeletierRenger2014}, to the fact that the operator $(\cL'-W)'$ satisfies detailed balance with respect to $\mu$}. This concludes the proof.
\end{proof}
\begin{Note}
\label{Note on decomposition}
	Some comments on Theorem \ref{theorem:decomposition}. 
	\begin{itemize}
		\item The first part of the theorem implies that if $\cL_s$ is symmetric in $\ltm$ then $\cL'_s$ is in gradient flow form. We prove this fact without making use of LDPs. It must be  emphasized that we find {\em one possible} gradient flow structure for $\cL'_s$, the one that corresponds to using the relative entropy as entropy functional. In other words the first part of the theorem says that we can always look at $\cL'_s$ as a gradient flow for the relative entropy. It is this ``a-priori choice" of entropy functional that allows one to bypass the use of LDPs. Avoiding the use of LDPs makes proofs simpler, and this is particularly true in the linear case of Section \ref{sec:3}, where only straightforward arguments are used. However it also conceals important physical considerations (summarised in the introduction) which shed a more profound light on the microscopic origin of the gradient flow structure.    
		\item Condition \eqref{ortogproof} can be dropped by using the observation in Note \ref{note:variousondissip}, point ii).
		\item In the proof of the second part of the theorem we used \cite[Theorem 3.3]{MielkePeletierRenger2014}, which assumes the validity of a LDP. However the proof of the specific implication of that theorem that we use  here does not require assuming the validity of a LDP. 
		\item Suppose that $\mathscr{H}_s(\rho;\cdot)$ is differentiable. Then it is convex in the second argument, for every $\rho$ fixed, if and only if
			$$
			\mathscr{H}_s(\rho;\xi+\bar{\xi})\geq \mathscr{H}_s(\rho;\xi)+( \vd_{\xi} \mathscr{H}_s(\rho;\xi),\bar{\xi})\, ,
			$$
			for all functions $\xi,\bar{\xi}$ in the appropriate domains.  
			Let $\Gamma_s$ be the carr\'{e} du champ operator associated to $\mathcal{L}_s$, namely
			$$
			\Gamma_s(\xi,\bar{\xi}):=\frac{1}{2}\left(\mathcal{L}_s(\xi \bar{\xi})-\xi\mathcal{L}_s\bar{\xi}-\bar{\xi}\mathcal{L}_s\xi\right).
			$$
			Then
			\begin{align*}
				\mathscr{H}_s(\rho;\xi+\bar{\xi})&=\int_{\R^d}\left(e^{-\xi}\mathcal{L}_s e^{\xi}+e^{-\bar{\xi}}\mathcal{L}_s e^{\bar{\xi}}+2e^{-(\xi+\bar{\xi})}\Gamma_s(e^{\xi},e^{\bar{\xi}})\right)\,d\rho(x),
				\\ \mathscr{H}_s(\rho;\xi)+( \vd_{\xi} \mathscr{H}_s(\rho;\xi),\bar{\xi})&=\int_{\R^d} e^{-\xi}\mathcal{L}_s e^{\xi}\,d\rho(x)+\int \left(\mathcal{L}_s\bar{\xi}+2e^{-\xi}\Gamma_s(e^{\xi},\bar{\xi})\right)\,d\rho(x).
			\end{align*}
			Thus $\mathscr{H}_s$ is convex if and only if
			\begin{equation}
				\label{eq: convexity condition}
				\int_{\R^d} e^{-\bar{\xi}}\left(\mathcal{L}_s e^{\bar{\xi}}+ 2e^{-\xi}\Gamma_s(e^{\xi},e^{\bar{\xi}})\right)\,d\rho(x)\geq \int_{\R^d}\left(\mathcal{L}_s \bar{\xi}+2e^{-\xi}\Gamma_s(e^{\xi},\bar{\xi})\right)\,d\rho(x)~\text{for all}~ \xi,\bar{\xi}.
			\end{equation}
			For diffusion processes, $\mathcal{L}_s$ and $\Gamma_s$ satisfy the chain rule 
			$$
			\mathcal{L}_s\phi(\xi)=\partial_\xi\phi(\xi)\mathcal{L}_s \xi+ \partial^{2}_\xi\phi(\xi)\Gamma_s(\xi,\xi)\quad\text{and}\quad \Gamma_s(\phi(\xi),\bar{\xi})=\partial_\xi\phi(\xi)\Gamma_s(\xi,\bar{\xi}),
			$$
			thus
			\begin{equation}
			    \label{eq: diffusion-carreduchamp}
					\mathcal{L}_s e^{\bar{\xi}}=e^{\bar{\xi}}(\mathcal{L}_s \bar{\xi}+\Gamma_s(\bar{\xi},\bar{\xi})),\quad e^{-(\xi+\bar{\xi})}\Gamma_s(e^{\xi},e^{\bar{\xi}})=e^{-\xi}\Gamma_s(e^{\xi},\bar{\xi})=\Gamma_s(\xi,\bar{\xi}).
				\end{equation}
			In this case the convexity condition \eqref{eq: convexity condition} reduces to 
			$$
			\int_{\R^d} \Gamma_s(\bar{\xi},\bar{\xi})\, d\rho(x)\geq 0,
			$$
			which is always true as $\Gamma_s(\bar{\xi},\bar{\xi}) \geq 0$ (see for instance \cite[Section 1.4.2]{bakry2013analysis}).
		\end{itemize}
\end{Note}


\section{Large Deviation Principles (LDPs) and  generalised-reversibility}\label{sec:4}
In this section we  consider a particular class of non-reversible processes, the class of so-called  {\em generalised-reversible} processes, and we characterize the form of their large deviation rate functional. To this end, in Subsection \ref{subsec:4.1}  and Subsection \ref{subsec:4.2}, respectively,  we give a summary background on generalised reversibility and  LDPs, respectively. In Subsection \ref{subsec:4.3} we state and prove the main result of this section,  Proposition \ref{prop:irrevlagrangian}. 
\subsection{Generalised Reversibility}\label{subsec:4.1}
We start by briefly recalling the definition and main facts  about generalised reversibility, more details can be found in \cite[Section 2.2.1.2]{Lelievre2010}, but this definition goes back to at least  the work of Yaglom \cite{Yaglom} (and for this reason generalised reversible processes are also called Yaglom-reversible at times).
Let $\mu$ be a probability measure on $\R^d$ and $\trans$ be an involutive transformation (i.e. $\trans = \trans^{-1}$) on $\R^d$ leaving $\mu$ invariant, that is,  
\be\label{invarS}
\mu(\trans (dx)) = \mu(\trans^{-1} (dx)) = \mu (dx) \,.
\ee

An $\R^d$-valued time-homogeneous  Markov process $(X(t))_{t\geq 0}$ is said to be {\em generalised reversible with respect to $\mu$ up to $\trans$} iff whenever $X(0)$ is distributed according to $\mu$ then $(X(t))_{t\in[0,T]}$ and the time-reversed process $(\trans(X(T-t)))_{t\in[0,T]}$ have the same  distribution (on the space of continuous paths), for every fixed $T>0$. 

Let $\cL$ be the generator of $X_t$. If $\trans$ is smooth enough -- in particular, if it is such that the composition $f\circ \trans$ belongs to the domain of $\cL$ whenever $f$ does, which we assume from now on -- one can see that $X_t$  is generalised reversible up to $\trans$ with respect to $\mu$ iff
\begin{equation}
	\label{eq: generalized revseribility}
	\int f(x)(\mathcal{L} g)(x)\,d\mu(x)=\int (g\circ \trans)(x)[\mathcal{L}(f\circ \trans)](x)\,d\mu(x), \quad \forall\,  f,g \in \cD_2(\cL)\,.
\end{equation}
From the above it is clear that a generalised-reversible process  is non-reversible in the sense that the detailed balance equation \eqref{detbalance} does not hold. 
However when $\trans$ is the identity, the above condition simply reduces to detailed balance. 
It is easy to see that if  $\cL$ is reversible up to $\trans$ with respect to $\mu$ \footnote{By this we mean that the process with generator $\cL$ is reversible up to $\trans$ with respect to $\mu$. } then $\mu$ is an invariant measure for $X_t$ (just take $f \equiv 1$ in the above and use the fact that  constant functions belong to the kernel of $\cL$). 
Note that \eqref{eq: generalized revseribility} can be reformulated as 
\begin{equation}
	\label{eq: generalized reversibility 2}
	(\cL^*f)(x)={\trans}_\#[\cL ({\trans_\#}f)](x) = (\cL ({\trans_\#}f))(\trans(x)), \quad \mbox{ where } {\trans}_\# f:=f\circ \trans \,. 
\end{equation}
Indeed,  by a change of variable, we have
\begin{align*}
	\int g(x){\trans_\#}[\cL ({\trans_\#}f)](x)\,d\mu(x)
	&=\int g(x)[\cL (f\circ \trans)(\trans (x))] d \mu(x)
	\\& \stackrel{\eqref{invarS}}{=}\int g(\trans (x))[\cL (f\circ \trans)](x)\,d\mu(x) \,
	\\&\stackrel{\eqref{eq: generalized revseribility}}{=}\int f(x)(\cL g)(x)d\mu(x).
\end{align*}

A typical example of generalised reversibility is the Langevin dynamics $(x_t, v_t)$ of equation \eqref{Langevin}, 
which is generalised reversible with respect to the measure $\mu$ in \eqref{target}  up to the velocity flip $\trans:(x,v)\mapsto (x,-v)$, see \cite{Lelievre2010}. 

\subsection{Feng-Kurtz approach to large deviation principle of empirical measures}\label{subsec:4.2}
\newcommand{\cDb}{\cD_b}
The functional setting of this subsection and the next is slightly different from the one we used in previous sections; in particular throughout Section \ref{sec:4} we drop our standing assumptions   Hypothesis \ref{SA}. 

Let $t \in [0,T]$ and  $\{X_t\nup\}_{\{ n \in \N \}}$ be a sequence of time-homogeneous independent Markov processes, each of them with common state space $\R^d$  and with common generator  $\mathcal{L}: \cDb(\mathcal{L}) \rightarrow C_b(\R^d)$, where the domain $\cDb$ of $\cL$ is the set of functions $f:\R^d \rightarrow \R$ such that $\cL f \in C_b(\R^d)$ \footnote{Let us recall that there are in general two approaches to defining the domain of a Markov operator: either one defines it as the largest set where the operator can be seen as the generator of the associated Markov semigroup, and this is substantially what we did in previous sections, or one defines it by fixing the image of $\cL$, which is what we do here. We use the space of continuous and bounded functions here because it generates the narrow (weak) topology in the space of probability measures, which is a natural topology for large deviation results in this section~ \cite{DawsonGartner1987,FengKurtz}.} and $\cL$ is a linear operator.  We assume that $\cDb$ is large enough that if \eqref{eq: generalized revseribility} holds for every $f,g, \in \cDb$ then it also holds for every $f,g \in \cD_2$.

The empirical process $\rho\nup$ associated to $\{X_t\nup\}_n$ is defined by
\begin{equation}
	\label{eq: empirical process}
	\rho^{(n)}: t\mapsto \frac{1}{n}\sum_{i=1}^n\delta_{X^{(i)}(t)},\quad t \in [0,T] \, ,
\end{equation}
where $\delta$ is the dirac delta. Denoting by $\Prd$ the space of probability measures on $\R^d$, for each $t \in [0,T]$ and $n\in \N$ fixed, $\rho_t\nup$ is a random probability measure on $\R^d$. Hence 
$\{\rho^{(n)}(\cdot)\}_{n}$ can be viewed as a sequence of $D([0,T];\mathcal{P}(\R^d))$-valued random variables where $D([0,T];\mathcal{P}(\R^d))$ denotes the Skorohod space of paths from $[0,T]$ to $\R^d$. Suppose that $\rho^{(n)}(0)$, as a sequence of $\mathcal{P}(\R^d)$- valued random variables, satisfies a LDP with a good rate function $I_0: \mathcal{P}(\R^d)\rightarrow [0,\infty)$ (see Appendix \ref{appendixA} for basic definitions about LDPs). 
Then, under rather general conditions (see \cite[Chapter 13]{FengKurtz} for non-degenerate diffusion processes, \cite{Budhiraja2012, DuongPeletierZimmer2013} for degenerate diffusion processes, \cite{Feng1994, MielkePeletierRenger2014} for finite-state continuous time Markov chains or also \cite{DawsonGartner1987}), the empirical process satisfies a large deviation principle in $D([0,T];\mathcal{P}(\R^d))$ of the form
\be\label{largedevpath}
\mathrm{Prob}\Big((\rho^{(n)}_t)_{t=0}^T\approx \pathrho\Big)\overset{n\rightarrow \infty}{\sim} e^{-n \cI_T(\pathrho)},\quad \cI_T(\pathrho):= I_0(\rho_0)+\int_0^T \L(\rho_t;\dot{\rho}_t)\,dt, \footnote{The complete explanation of this popular short-hand notation is contained in Appendix \ref{appendixA}.}
\ee
for some function $\L$ which is commonly referred to as the {\em Lagrangian}, for reasons that will be clear in few lines. In the above and throughout  the notation $\pathrho$ is to specify that we are referring to the whole path $\{\rho_t\}_{t \in [0,T]} \subset \mathcal P(\R^d)$. If $\rho\nup_{\cdot}$ satisfies the LDP \eqref{largedevpath} then  the functional $\L$ is convex in the second variable  and 
$$
\L(\rho_t; \dot\rho_t) = 0 \Leftrightarrow \dot \rho_t = \cL'\rho_t \, 
$$
where the Fokker Planck-equation on the RHS of the above is to be understood on the space of probability measures \cite[Proposition 3.1]{MielkePeletierRenger2014}. 

\noindent
Feng and Kurtz \cite{FengKurtz}  provide a formal method to calculate the large-deviation rate functional $\L$ in a variety of scenarios; in our setting, i.e. in the case in which we consider LDPs for empirical measures of i.i.d processes, such a method proceeds as follows: starting from the generator $\cL$,  we compute the so-called {\em Hamiltonian} $\mathscr{H} = \mathscr{H} (\rho;\xi) = \mathscr{H}_{\rho}(\xi)$, as follows
\begin{equation}
	\label{eq: H}
	\mathscr{H}(\rho;\xi):=\int_{\R^d} e^{-\xi(x)} \left(\mathcal{L} e^{\xi}\right)(x) \, d\rho(x) \,.
\end{equation}
For every $\rho \in \mathcal{P}(\R^d)$, we regard $\mathscr{H}$ as a function $\mathscr{H}(\rho; \cdot): \cDb(\mathscr{H}_{\rho})  \rightarrow \mathbb{R}$, where $\cDb (\mathscr{H}_{\rho})$ consists of the set of functions $\xi: \R^d \rightarrow \R$ such  that $e^{\xi} \in \cDb(\cL)$ and the right hand side of the above makes sense.    Once the Hamiltonian is obtained, if $\rho\nup_{\cdot}$ satisfies a LDP,  the Lagrangian    $\L(\rho; \cdot): \cDb(\L_{\rho})  \rightarrow \R$ is found as the Legendre transform of  $\mathscr{H}$, namely
\begin{equation}
	\label{eq: L and H}
	\L(\rho; g)=\sup_{\xi \in \cD_b(\mathscr{H}_{\rho})}\Big\{( \xi, g) - \mathscr{H}(\rho;\xi)\Big\} \,,
\end{equation}
where in the above $( \cdot, \cdot )$ is a formal $L^2$ scalar product. 

\begin{Note}\label{note:what Lagrangian be on manifolds} 
 We emphasize that  the approach of Feng and Kurtz gives a formal procedure to find the large deviation rate functional $\L$, however to make this  procedure rigorous one needs to verify many technical conditions;  \cite{FengKurtz} presents rigorous proofs of LDPs for many Markov processes. Moreover, as one would expect, specifying the appropriate domains of $\mathscr{H}$ and $\L$  is better done on a case by case basis, see e.g. \cite[Section 4.2]{MielkePeletierRenger2014}. Again to compare with the presentation of this theory on manifolds, if $\cZ$ is  a  manifold then, for every $z \in \cZ$,  $\mathscr{H}(z; \cdot): T^*_z\cZ \rightarrow \R$ and 
	$\L(z;\cdot ):T_z\cZ \rightarrow \R $. 
\end{Note}

Assuming that for every $n \in \N$  the process $\{X_t\nup\}_{t\geq 0}$ admits a unique invariant measure $\mu$ (which is the same for every $n \in \N$) and that the associated empirical process $\rho\nup$ satisfies a LDP of the form \eqref{largedevpath} with {L}agrangian $\L$,  one of the main results of  \cite{MielkePeletierRenger2014} can be informally stated as follows (see \cite[Theorem 3.3]{MielkePeletierRenger2014} for a precise statement): the process $X_t\nup$ is reversible with respect to  $\mu$ (in the sense that $\mu$ satisfies detailed balance \eqref{detbalance}) {\em if and only if } the Lagrangian $\L$ appearing in the LDP satisfies the following relation
\begin{equation}
	\label{eq: relationLdetbalance}
	\L(\rho;g)-\L(\rho;-g)= ( \vd\cS_\mu(\rho),g), \quad \mbox{for every } \rho \in \mathcal P(\R^d) , g \in \cD_b(\L_{\rho}), 
\end{equation}
where $\cS_\mu(\rho)$ has been defined in \eqref{hypto GEN definitions}. In the next subsection we show an analogous result for the case in which the invariant measure $\mu$ is only reversible up to some transformation $\trans$.

\subsection{Reversibility, generalised reversibility and large deviation principles}\label{subsec:4.3}
We keep working in the setting of Subsection \ref{subsec:4.2}, i.e. $\{X_t\nup\}_{t\geq 0}$ is a sequence of independent time-homogeneous Markov processes with common generator $\cL$; again suppose that for each $n \in \N$ the process $X^{(n)}_t$ admits an invariant measure $\mu$ which is the same for every {$n$}.
Consider the map $\varphi: D([0,T];\mathcal{P}(\R^d))\rightarrow D([0,T];\mathcal{P}(\R^d))$ defined as
\be\label{mapvarphi}
\varphi(\{\rho_t\}_{t \in [0,T]})=\{\trans_\#\rho_{T-t}\}_{t\in[0,T]} \,,
\ee
where we recall $\trans_\#\rho$ denotes the push-forward of the probability measure $\rho$ under a map $\trans: \R^d \rightarrow \R^d$, i.e.  {$\trans_\#\rho(B)=\rho(\trans^{-1}(B))$} for every Borel set $B\subset \R^d$. We make the following assumption. 
\begin{assumption} \label{assF}
	Let $\trans$ be an involutive,  volume preserving transformation  (i.e. $|J\trans|=1$)\footnote{Here $|J\trans|$ denotes the determinant of the Jacobian of $\trans$.} on $\R^d$  leaving $\mu$ invariant and satisfying the following regularity assumptions: i) $\trans$ is smooth enough so that the map $\varphi$ in \eqref{mapvarphi} is well defined (i.e. $\{\trans_\#\rho_{T-t}\}_{t\in[0,T]}$ belongs to $D([0,T];\mathcal{P}(\R^d))$ if the path $\{\rho_t\}_{t \in [0,T]}$ does) and  continuous; ii) if  $ g \in \cDb(\L)$ ($ \cDb(\mathscr{H}),$ respectively) then $g \circ \trans \in \cDb(\L)$ ($\cDb(\mathscr{H})$, respectively). 
\end{assumption}


\begin{proposition} \label{prop:irrevlagrangian} 
	With the setting and notation introduced above, suppose that $\{X\nup(0)\}_{n \in \N}$ are i.i.d random variables, distributed according to $\mu$; suppose  the empirical process $\rho^{(n)}$ associated with the sequence $\{X\nup(t)\}_{n \in N}$ satisfies a large deviation principle on $D([0,T];\mathcal{P}(\R^d))$ with a good rate function $\cI_T: D([0,T];\mathcal{P}(\R^d))\rightarrow [0,\infty)$ given by
	\begin{equation}
		\label{eq: rate functional}
		\cI_T(\pathrho)=I_0(\rho_0)+\int_0^T\L(\rho_t;\dot{\rho}_t)\,dt,
	\end{equation}
	where $I_0$ is the large deviation rate functional of $\{\rho^{(n)}(0)\}_n$,  the empirical process at the initial time.  
	Let  $\trans: \R^d \rightarrow \R^d$ be a transformation on $\R^d$ satisfying Assumption \ref{assF}.      If for each $n \in \N$ the process $\{X\nup(t)\}_t$ is reversible with respect to $\mu$ up to $\trans$ then the Lagrangian $\L$ appearing in the LDP satisfies the following relation: 
	\begin{equation}
		\label{eq: relation L}
		\L(\rho;\s)-\L(\trans_\#\rho;-\s\circ \trans)= ( \vd\cS_\mu(\rho),\s), \quad \mbox{for every } \rho \in \mathcal P(\R^d), \s \in \cDb(\L_{\rho}), 
	\end{equation}
	where $\cS_\mu(\rho)$ is as in \eqref{hypto GEN definitions}.    Moreover, if $\vd\cS_\mu(\rho) \in \cDb(\mathscr{H}_{\rho})$,   \eqref{eq: relation L} is equivalent to the following relation (expressed in terms of the Hamiltonian)
	\begin{equation}
		\label{eq: relation H}
		\mathscr{H}(\rho;\vd\cS_\mu(\rho)+\xi)=\mathscr{H}(\trans_\#\rho;-\xi\circ \trans).
	\end{equation}
	If, additionally, the Lagrangian $\L$ is twice differentiable near $(\mu;0)$, then the converse statement holds as well, i.e. \eqref{eq: relation L} implies \eqref{eq: generalized revseribility}. 
\end{proposition}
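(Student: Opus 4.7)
The plan is to reduce the proposition to a time-reversal symmetry of the path-space LDP, mirroring the reversible-case argument of \cite[Theorem 3.3]{MielkePeletierRenger2014} but inserting the involution $\trans$ at the appropriate places. For the forward direction I would first observe that, because $X^{(n)}(0)\sim\mu$ is stationary and each $X^{(n)}$ is generalised reversible up to $\trans$, the paths $\{X^{(n)}(t)\}_{t\in[0,T]}$ and $\{\trans(X^{(n)}(T-t))\}_{t\in[0,T]}$ have the same distribution. Transferring this symmetry to the empirical processes, $\{\rho^{(n)}_t\}_{t\in[0,T]}$ and $\varphi(\rho^{(n)}_\cdot)=\{\trans_\#\rho^{(n)}_{T-t}\}_{t\in[0,T]}$ are equal in law on $D([0,T];\mathcal{P}(\R^d))$. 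Continuity of $\varphi$ (Assumption \ref{assF}) together with uniqueness of the good rate function then forces $\cI_T(\pathrho)=\cI_T(\varphi(\pathrho))$ for every admissible path $\pathrho$.

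Next, I would identify $I_0=\cS_\mu$ by Sanov's theorem (since $\rho^{(n)}(0)$ is the empirical measure of i.i.d.\ samples from $\mu$), and use that $\trans$ is a volume-preserving involution leaving $\mu$ invariant to verify $\cS_\mu(\trans_\#\rho)=\cS_\mu(\rho)$ and $\trans_\#\dot\rho=\dot\rho\circ\trans$. Expanding both sides of $\cI_T(\pathrho)=\cI_T(\varphi(\pathrho))$ and applying the change of variables $s=T-t$ on the right yields
\begin{equation*}
\cS_\mu(\rho_0)+\int_0^T\L(\rho_t;\dot\rho_t)\,dt=\cS_\mu(\rho_T)+\int_0^T\L(\trans_\#\rho_t;-\dot\rho_t\circ\trans)\,dt.
\end{equation*}
Differentiating this identity in $T$ and arguing that the terminal pair $(\rho_T,\dot\rho_T)$ can be chosen to range over a sufficiently rich set of admissible $(\rho,\s)$ yields the pointwise identity \eqref{eq: relation L}. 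The equivalence with \eqref{eq: relation H} is then a direct Legendre-transform computation: substituting \eqref{eq: relation L} into the defining supremum for $\mathscr{H}(\rho;\vd\cS_\mu(\rho)+\xi)$ and performing the change of variable $\s\mapsto-\s\circ\trans$ (a bijection of tangent vectors because $|J\trans|=1$ and $\trans^{-1}=\trans$), using $(\xi,-\s\circ\trans)=-(\xi\circ\trans,\s)$, turns the supremum into $\mathscr{H}(\trans_\#\rho;-\xi\circ\trans)$; the reverse implication follows by the same computation read backwards.

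For the converse, I would differentiate \eqref{eq: relation H} once in $\xi$ at $\xi=0$ in an arbitrary direction $\bar\xi$, using the formula \eqref{limit} for $\vd_\xi\mathscr{H}$ together with the elementary observation that $\mathscr{H}(\rho;\xi+c)=\mathscr{H}(\rho;\xi)$ for every constant $c$ (since $\cL$ is linear and $\cL\mathbf{1}=0$), which allows replacing $\vd\cS_\mu(\rho)=\log(\rho/\mu)+1$ by $\log(\rho/\mu)$. After using $\cL'\mu=0$ to kill one term on the left, the left-hand derivative reduces to $-\int\bar\xi\,\mu\,\cL(\rho/\mu)\,dx$, while the right-hand derivative, after the change of variable induced by $\trans$, reduces to $-\int[\cL(\bar\xi\circ\trans)]\circ\trans\,d\rho$. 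Equating the two expressions for all $\bar\xi$ and $\rho$, and rewriting $\rho=h\mu$, gives precisely the reformulation \eqref{eq: generalized reversibility 2} of generalised reversibility. The main obstacle, in my view, is the passage from the integrated identity to the pointwise identity \eqref{eq: relation L}: one must show that sufficiently many admissible paths exist so that $(\rho_T,\dot\rho_T)$ essentially sweeps out the full domain of $\L$, which is a standard but technical density/approximation question in this literature; the twice-differentiability hypothesis on $\L$ near $(\mu;0)$ is what keeps the final differentiation step in the converse direction well-defined.
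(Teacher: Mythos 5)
Your forward argument and the equivalence between \eqref{eq: relation L} and \eqref{eq: relation H} follow essentially the same path as the paper: you transfer the time-reversal $+$ flip symmetry of the particle paths to the empirical process via $\varphi$, invoke uniqueness of the rate function (the paper phrases this through the contraction principle applied to the invertible $\varphi$, but the substance is identical), identify $I_0=\cS_\mu$ by Sanov, differentiate the integrated identity in $T$, and then convert between $\L$ and $\mathscr{H}$ by the change of variable $\s\mapsto -\s\circ\trans$ inside the supremum. The one omission is the explicit verification $-\dot{\hat\rho}_t = \dot\rho_{T-t}\circ\trans$, which the paper proves by testing against a smooth function and using $|J\trans|=1$; your shorthand $\trans_\#\dot\rho=\dot\rho\circ\trans$ is the right ingredient, just stated without the short computation.

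Where you genuinely diverge from the paper is in the converse direction, and your route is in fact more direct. The paper introduces a six-parameter auxiliary function
\[
a(\epsilon_1,\dots,\epsilon_6)=\L\bigl((1+\epsilon_1\rho+\epsilon_4\rho_\#)\mu;\,\cL'((1+\epsilon_2\rho+\epsilon_3 g+\epsilon_5\rho_\#+\epsilon_6 g_\#)\mu)\bigr),
\]
differentiates \eqref{eq: relation L} twice, exploits the stationarity of $\L$ along the Fokker--Planck flow, and cancels mixed second partials $D_{13}a,D_{23}a,D_{46}a,D_{56}a$ to reconstruct the weak form of \eqref{eq: generalized revseribility}; this is why it needs $\L$ twice differentiable near $(\mu;0)$. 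You instead go through \eqref{eq: relation H} and differentiate it once in $\xi$ at $\xi=0$. Your computation is correct: writing $\vd\cS_\mu(\rho)=\log(\rho/\mu)+1$ and using $\cL\mathbf{1}=0$ (so the additive constant $1$ drops), the left side reduces, via $\cL'\mu=0$, to $-\int\bar\xi\,\mu\,\cL(\rho/\mu)\,dx$; the right side, after pulling the integral back along $\trans$, reduces to $-\int [\cL(\bar\xi\circ\trans)]\circ\trans\,\rho\,dx$; setting $\rho=h\mu$ and changing variables once more gives exactly
\[
\int \bar\xi\,(\cL h)\,d\mu=\int (h\circ\trans)\,\cL(\bar\xi\circ\trans)\,d\mu,
\]
which is \eqref{eq: generalized revseribility}. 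The trade-off is in the hypotheses: your chain is \eqref{eq: relation L} $\Rightarrow$ \eqref{eq: relation H} $\Rightarrow$ \eqref{eq: generalized revseribility}, so you inherit the standing assumption $\vd\cS_\mu(\rho)\in\cDb(\mathscr{H}_\rho)$ required for the first implication, plus Gateaux differentiability of $\mathscr{H}$ in $\xi$ (which the paper supplies in \eqref{limit} but only for $\mathscr{H}_s$ — you should note that the identical expansion applies to $\mathscr{H}$ with $\cL_s$ replaced by $\cL$). These are not the same as the paper's stated hypothesis of twice-differentiability of $\L$ near $(\mu;0)$, so if you adopt your route you should restate the regularity assumption accordingly. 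Finally, the density/approximation issue you flag at the end is real but is treated at the same formal level in the paper, which simply asserts that $(\rho_T,\dot\rho_T)$ ranges over a rich enough set; so this is not a gap relative to the paper's own standard of rigor.
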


\begin{proof}
	We build on the proof of \cite[Theorem 3.3]{MielkePeletierRenger2014}. Consider the map $\varphi: D([0,T];\mathcal{P}(\R^d))\rightarrow D([0,T];\mathcal{P}(\R^d))$ defined in \eqref{mapvarphi}. 
	By the involutivity of $\trans$, $\varphi$ is invertible and involutive, and indeed the inverse map is given by
	$$
	\varphi^{-1}(\{\rho_{t}\}_{t \in [0,T]})=\{\trans_\#\rho_{T-t}\}_{t \in [0,T]} \,, \quad \mbox{for any }  \{\rho_{t}\}_{t \in [0,T]} \in D([0,T]; \mathcal P(\R^d)) \,.
	$$
	Let $\tilde{\rho}^{(n)}$ be the empirical process associated with the process $\trans(X\nup(T-t))$, i.e.
	\[
	\tilde{\rho}^{(n)}\colon t\mapsto\frac{1}{n}\sum_{i=1}^n\delta_{\trans(X^{(i)}(T-t))}.
	\]
	Using the identity
	$$
	\int_{\R^d} f(y) (\trans_\#\rho)(dy)=\int_{\R^d} f(\trans(x)) \rho(dx), 
	$$
	which holds for every continuous and bounded $f:\R^d \rightarrow \R$, 
	one can see that
	$\tilde{\rho}^{(n)}_{\cdot}=\varphi(\rho^{(n)}_{\cdot})$. 
	Since $\rho^{(n)}_{\cdot}$ satisfies a LDP with the rate functional $\cI_T$, by the contraction principle \cite[Lemma 3.11]{FengKurtz}, \footnote{It is in applying the contraction principle that we use the assumption that $\cI$ is a good rate function.} also $\tilde{\rho}^{(n)}$ satisfies a LDP with the rate functional $\tilde{\cI}: D([0,T];\mathcal{P}(\R^d))\rightarrow [0,\infty)$ given by 
	\be\label{iitilde}
	\tilde{\cI}_T(\{\rho_t\}_{t \in [0,T]})=\cI(\varphi(\{\rho_t\}_{t \in [0,T]})). \,\footnote{Since $\varphi$ is invertible, in the application of contraction principle we do not need to take an infimum over the pre-image set}
	\ee
	Letting $\widehat{\{\rho_t\}_t}=\varphi(\{\rho_t\}_t)$, the above can be rewritten as   
	\begin{equation*}
		\tilde{\cI}_T(\{\rho_t\}_t)=\cI_T(\widehat{\{\rho_t\}}_t)=I_0(\hat{\rho}_0)+\int_0^T \L(\hat{\rho}_t,\dot{\hat{\rho}}_t)\,dt.
	\end{equation*}
	We claim that the following identity holds:
	\begin{equation}\label{minus}
		-\dot{\hat{\rho}}_t(x)=(\dot{\rho}_{T-t}\circ \trans)(x) \,, \quad \mbox{for every }  x \in \R^d\,.
	\end{equation} 
	We will prove this claim below and for the time being we move on with the main argument.
	
	\noindent
	Note now that the generalised reversibility of the underlying particle systems implies that $\cI=\tilde{\cI}$. This is because, by generalised reversibility, for each $n$, $(X\nup(t))_{t\in[0,T]}$ and the time-reversed process $(\trans(X\nup(T-t)))_{t\in[0,T]}$ have the same  distribution, for every fixed $T>0$. Hence, 
	$$
	\mathrm{Prob} \left( \{\rho\nup_t\}_{t \in [0,T]} \in \Theta\right) = 
	\mathrm{Prob} \left( \{\rho\nup_t\}_{t \in [0,T]} \in \varphi(\Theta)\right)\,,
	$$
	{for every } open set $\Theta \subseteq D([0,T]; \mathcal P(\R^d)) $.
	From  $\cI=\tilde{\cI}$ and  \eqref{iitilde}, 
	for any curve $\{\rho_t\}_ t  \subset D([0,T]; \mathcal P(\R^d)) $ we then have 
	\begin{align*}
		I_0(\rho_0)+\int_0^T\L(\rho_t;\dot{\rho}_t)\, dt&=I_0(\hat{\rho}_0)+\int_0^T \L(\hat{\rho}_t;\dot{\hat{\rho}}_t)\,dt
		\\& \stackrel{\eqref{minus}}{=}I_0(\trans_{\#}\rho_T)+\int_0^T\L(\trans_{\#}\rho_{T-t};-\dot{\rho}_{T-t}\circ \trans)\, dt
		\\&=I_0(\trans_{\#}\rho_T)+\int_0^T\L(\trans_{\#}\rho_t;-\dot{\rho}_t\circ \trans)\, dt \,.
	\end{align*}
	Since $\{X\nup(0)\}_n$ are i.i.d. with distribution $\mu$, according to Sanov's theorem \cite[Theorem 6.2.10]{DemboZeitouni-book} we have $I_0(\rho_0)=\cS_{\mu}(\rho_0)$. Thus, 
	\begin{equation}
		\cS_{\mu}(\trans_{\#}\rho_T)-\cS_{\mu}(\rho_0)=\int_0^T \left[ \L(\rho_t;\dot{\rho}_t)-\L(\trans_{\#}\rho_t;-\dot{\rho}_t\circ \trans) \right]\, dt
	\end{equation}
	Differentiating the above with respect to $T$, we then get
	\[
	( \vd \cS_{\mu}(\trans_{\#}\rho_T),\dot{\rho}_T\circ \trans)=\L(\rho_T;\dot{\rho}_T)-\L(\trans_{\#}\rho_T; -\dot{\rho}_T\circ \trans) \,.
	\]
	Since $\{\rho_t\}_t$ is an arbitrary curve in $D([0,T]; \mathcal P(\R^d))$,   $\rho_T$  is an arbitrary probability measure, as is arbitrary the element $\dot{\rho}_T \in \cDb(\L_{\rho_T})$,   hence  
	\[
	( \vd \cS_{\mu}(\rho),\s\circ \trans)=\L(\trans_{\#}\rho;\s)-\L(\rho;-\s\circ \trans),
	\]
	for any $\rho,\s$ in the appropriate spaces, 
	which is equivalent to \eqref{eq: relation L}. To conclude the proof of \eqref{eq: relation L} we just need to prove the claim \eqref{minus}. 
	This can be seen from the following calculations
	\begin{align*}
		\int \dot{\hat{\rho}}_t (x) f(x)\,dx
		&=   \frac{d}{dt}\int \hat{\rho}_t(x) f(x)\,dx
		\\
		&=-\int \dot{\rho}_{T-t}(x) (f\circ \trans)(x)\,dx=-\int (\dot{\rho}_{T-t}\circ \trans)(x) f(x)\,dx ,
	\end{align*}
	where the second equality follows from the definition of $\hat{\rho}_t$ and having used the fact that $\trans$ is volume preserving. 
	The convexity of $\L$ (implied by the LDP) now allows us to find the relation \eqref{eq: relation H}  as follows:  
	\begin{align*}
		\mathscr{H}(\rho; \vd\cS_\mu(\rho)+\xi)&=\sup_s\{( \vd\cS_\mu(\rho)+\xi,\s )-\L(\rho;\s)\}
		\\& \stackrel{\eqref{eq: relation L}}{=}\sup_\s\{( \xi,\s )-\L(\trans_\#\rho;-\s\circ \trans)\}
		\\&=\sup_\s\{( \xi,-\s\circ \trans )-\L(\trans_\#\rho;\s)\}
		\\&=\sup_\s\{( -\xi\circ \trans,  \s )-\L(\trans_\#\rho; \s)\}
		\\&=\mathscr{H}(\trans_\#\rho; -\xi\circ \trans)\,.
	\end{align*}
	Let us now prove the converse implication, i.e. that \eqref{eq: relation L} implies \eqref{eq: generalized revseribility}. This can be done by a modification of the proof of  \cite[Theorem 3.3]{MielkePeletierRenger2014}, so we don't repeat all the details but we just explain the main steps. Standing the observations of  \cite[Remark 3.4]{MielkePeletierRenger2014}, let us define the function
	$$
	a(\epsilon_1, \epsilon_2, \dots, \epsilon_6) = 
	\L((1+ \epsilon_1 \rho + \epsilon_4 \rho_\#)\mu ; \, 
	\cL'((1+\epsilon_2\rho+\epsilon_3 g+\epsilon_5 \rho_\#+\epsilon_6 g_\#)\mu)) \,,
	$$
	where $\epsilon_j \in \R$ for every $j \in\{1, \dots, 6\}$ and, just for the rest of this proof,  we set $g_\#=\trans_\#g$, similarly for $\rho_\#$.  
	Since $\vd^2 \cS_{\mu}(\mu)[g_1, g_2]=
	\int_{\R^d} \frac{dg_1}{d\mu}\frac{dg_2}{d\mu} d\mu $, the statement that we want to prove can be rewritten as 
	\be\label{modstat}
	\vd^2 \cS_{\mu}(\mu)[ \mu \rho, \cL'(\mu g)] =  
	\vd^2 \cS_{\mu}(\mu)[\mu g_\# , \cL' (\mu\rho_\# )] \,.
	\ee
	To prove the above we make two observations: firstly, by \eqref{eq: relation L} (applied to the functions $(1+\epsilon_1\rho)\mu$ and $\cL'\big((1+\epsilon_3 g)\mu\big)$), we have 
	\be\label{ob1}
	a(\epsilon_1, 0, \epsilon_3, 0,0,0) -
	a(0,0,0,\epsilon_1, 0, -\epsilon_3)= 
	(\vd \cS_{\mu}((1+\epsilon_1\rho)\mu)), \cL'(\epsilon_3 g \mu))\, ; 
	\ee
	secondly, by the validity of the LDP, the function 
	$\L((1+\epsilon_1 \rho+\epsilon_4 \rho_\#)\mu ; \cL'[(1+\epsilon_1 \rho+\epsilon_4 \rho_\#)\mu] +h )$ attains its minimum when $h=0$, hence
	\be\label{ob2}
	D_3 a(\epsilon_1, \epsilon_1, 0,\epsilon_4, \epsilon_4, 0)=0 \,\,\mbox{ and } \,\,
	D_6 a(\epsilon_1, \epsilon_1, 0,\epsilon_4, \epsilon_4, 0)=0 \,.
	\ee
	If we differentiate \eqref{ob1} with respect to $\epsilon_3$ and then with respect to $\epsilon_1$ (and then calculate both derivatives in $\epsilon_1=\epsilon_3=0$), we get 
	$$
	D_{13}a(0,\dots, 0)+D_{46}a(0,\dots, 0) =
	\vd^2\cS_{\mu} (\mu)[\rho\mu, \cL'(g\mu)]\,.
	$$
	From \eqref{ob2} instead, we have 
	\begin{align*}
		&D_{13}a(0,\dots, 0)+D_{23}a(0,\dots, 0)	=0\\
		&D_{46}a(0,\dots, 0)+D_{56}a(0,\dots, 0) =0 \,.
	\end{align*}
	From the above we deduce
	$$
	-D_{23}a(0,\dots, 0)-D_{56}a(0,\dots, 0)= \vd^2\cS_{\mu} (\mu)[\rho\mu, \cL'(g\mu)]\,.
	$$
	If we calculate explicitly the two derivatives on the  LHS of the above, we obtain
	$$
	\vd^2_{gg}\L(\mu,0)\left((\cL'(g\mu),\cL'(\rho\mu))+
	(\cL'(\mu\rho_\#), \cL'(\mu g_\#)) = \vd^2\cS_{\mu} (\mu)[\rho\mu, \cL'(g\mu)]\,,
	\right)
	$$
	where $\vd^2_{gg}\L$ denotes the second derivative of $\L$ with respect to its second argument. The LHS of the above expression remains unchanged upon swapping $g$ with $\rho_\#$ and $\rho$ with $g_\#$, hence \eqref{modstat}.  This concludes the proof. 
	
\end{proof}

\begin{Note}
	\label{sec: alternative computation for H}
	\begin{itemize}\
	\item 
	\textup{In Proposition \ref{prop:irrevlagrangian} we started with assuming that the process $\{X\nup(t)\}_t$ is reversible with respect to $\mu$ up to $\trans$ and proved that, if this is the case, then the Lagrangian needs to satisfy the relation  \eqref{eq: rate functional}. From the relation for the Lagrangian we then deduced property \eqref{eq: relation H} for the Hamiltonian.   
		An alternative  way of proceeding is to derive   \eqref{eq: relation H} directly  from  the generalized reversibility condition \eqref{eq: generalized revseribility} expressed in terms of the generator, starting from the explicit formula of $\mathscr{H}$ in \eqref{eq: H}:
		\begin{align*}
			\mathscr{H}(\rho,\vd\cS_\mu(\rho)+\xi)&=\mathscr{H}(\rho,\log\frac{d\rho}{d\mu}+1+\xi)
			\\&=\int_{\R^d} e^{-\log\frac{\rho(x)}{\mu(x)}-1-\xi(x)}
			\mathcal{L} \left( e^{\log\frac{\rho}{\mu}+1+\xi} \right)(x)
			\rho(x) dx   
			\\&=\int_{\R^d} e^{-\xi(x)}\left(\mathcal{L}  e^{\log\frac{\rho}{\mu}+\xi}\right)\!(x)
			\,\mu(x) dx
			\\&
			\stackrel{\eqref{invarS},
				\eqref{eq: generalized revseribility}}{=}\int_{\R^d} 
			\rho(\trans(x)) e^{(\xi \circ \trans)(x)}
			\left( \cL e^{-(\xi \circ \trans)} \right)\!(x) dx
			\\
			& = \mathscr{H}(\trans_{\#} \rho; - \xi \circ \trans)\,.
		\end{align*}
		Note that in the first equality we have used the linearity of $\mathcal{L}$ to take the constant $e^{-1}$ out of the argument of $\cL$.  
		Once \eqref{eq: relation H} is formally obtained, \eqref{eq: relation L} can be derived from \eqref{eq: relation H} as follows ({in the second equality below we use the fact that for any function $f=f(\xi)$ one has $\sup_{\xi}f(\xi) = \sup_{\xi}f(\xi+a)$, for any constant $a$})
		\begin{align*}
			\L(\rho;\s)&=\sup_{\xi}\{( \xi, \s)-\mathscr{H}(\rho;\xi)\}
			\\&=\sup_{\xi}\{( \xi+\vd\cS_\mu(\rho), \s)-\mathscr{H}(\rho{;}\xi+\vd\cS_{{\mu}}(\rho))\}
			\\& {=}( \vd\cS_{{\mu}}(\rho),\s)+\sup_{\xi}\{(\xi,\s)-
			\mathscr{H}(\rho;\xi+\vd\cS_\mu(\rho))\}
			\\&\stackrel{\eqref{eq: relation H}}{=}( \vd\cS_\mu(\rho),\s)
			+\sup_{\xi}\{(\xi,\s)-\mathscr{H}
			(\rho\circ \trans;-\xi\circ \trans)\}
			\\& {=}( \vd\cS_\mu(\rho),\s)
			+\sup_{\xi}\{(\xi\circ \trans,\s)-\mathscr{H}(\rho\circ \trans;-\xi)\}
			\\&=( \vd\cS_\mu(\rho),\s)
			+\sup_{\xi}\{(\xi,\s\circ \trans)-\mathscr{H}(\rho\circ \trans,-\xi)\}
			\\&=( \vd\cS_\mu(\rho),\s)+\sup_{\xi}\{( -\xi,-\s\circ \trans)-\mathscr{H}(\rho\circ \trans;-\xi)\}\\&=
			( \vd\cS_\mu(\rho),\s)
			+\sup_{\xi}\{( \xi,-\s\circ \trans)-\mathscr{H}(\rho\circ \trans;\xi)\}	
			\\&=( \vd\cS_\mu(\rho),\s)+\L(\rho\circ \trans ; -\s\circ \trans).
		\end{align*}
		The observations in \cite[Remark 3.5]{MielkePeletierRenger2014} can be seen as a specific instance of the above calculations, in the case in which $\trans$ is the identity. 
		}
\item {By combining Theorem \ref{theorem:decomposition} and Proposition \ref{prop:irrevlagrangian}, one can derive a gradient flow structure of the Fokker Planck equation $\partial_t\rho=\mathcal{L}'\rho$ from a large deviation rate functional by computing the Hamiltonian $\mathscr{H}$ from the Lagrangian $\mathscr{L}$ appearing in the rate functional \eqref{eq: rate functional} then the dissipation potential $\Psi^\star$ from $\mathscr{H}$ using \eqref{eq: Psi from Ls} (the entropy functional being the relative entropy). As shown in \cite{MielkePeletierRenger2014} for a diffusion process with detailed balance, the resulting gradient flow structure is precisely the (quadratic) Wasserstein gradient flow discovered by Jordan-Kinderlehrer-Otto \cite{JKO98,AGS} (cf. \eqref{genWasgradflow}); while for a continuous time Markov chain with detailed balance, it is a non-quadratic gradient flow (cf. \eqref{non-quadratic gradient flow}), corresponding to the dissipation potential given by \cite[Eq. (4.6)]{MielkePeletierRenger2014}
$$
\Psi^\star(\rho,\xi)=\frac{1}{2}\sum_{i,j=1}^J \sqrt{\rho_i\rho_j Q_{ij}Q_{ji}}(\cosh(\xi_j-\xi_i)-1),
$$
where $Q=(Q_{ij})\in \mathbb{R}^{J\times J}$ denotes the generator of the chain. This is not the gradient flow structure for a (detailed balance) Markov chain found by Maas, Chow et.al. and Mielke \cite{Maas2011,Chow, Mielke2011b} (although the entropy functional being the same relative entropy, the dissipation potential in these papers is quadratic), see also the Introduction for relevant discussion. In Section 6, we derive pre-GENERIC structures for non-reversible processes and PDMP processes using \eqref{eq: H} to directly compute the Hamiltonian from the generator.
		}
\end{itemize}
\end{Note}


\section{Examples: Diffusion Processes and PDMPs}\label{sec:sec6}

In this section we consider two classes of processes, Diffusion Processes and Piecewise Deterministic Markov Processes (PDMPs), Subsection \ref{Diffusion processes} and Subsection \ref{sec:PDMPs}, respectively. We will show how the theory developed in previous sections can be applied to such classes of Markov evolutions and indicate assumptions under which the functional framework of previous sections can be made rigorous. 

\subsection{Diffusion processes} \label{Diffusion processes}We demonstrate how the results in Sections \ref{sec:3} and Section \ref{section:decomposition} can be applied to diffusion processes. Let $b\in C^1(\R^d;\R^d)$ be globally Lipschitz and $\sigma\in C^2(\R^{d};\R^{d\times m})$ be uniformly elliptic. We consider the following Ito diffusion process
\begin{equation}
	\label{eq: diffusion}
	dX_t=b(X_t)\,dt+\sqrt{2}\sigma(X_t)\,d\beta_t, \quad X_0=x \in \R^d\, ,
\end{equation}
where $(\beta_t)_{t\geq 0}$ is a standard Brownian motion in $\R^m$.
The generator of this process is given, on suitably smooth functions $u: \R^d \rightarrow \R$,  by
\begin{equation}
	\label{eq: diffusion operator}
	(\mathcal{L}u)(x)=b(x)\cdot\nabla u(x)+D(x):\nabla^2 u(x),
\end{equation}
where $D(x)=\sigma(x)\sigma(x)^\top$ and the notation ``:" denotes the Frobenius inner product of two matrices. The Fokker Planck equation associated to \eqref{eq: diffusion} is
\begin{equation}
	\label{eq: FPE}
	(\partial_t\rho_t)(x)=(\mathcal{L}'\rho_t)(x)=\div(D(x)\nabla \rho_t(x)-b(x)\rho_t(x)).
\end{equation}
Assume that the process admits an invariant measure  $\mu(dx)=\mu(x)\,dx$ (simple sufficient conditions for this can be found e.g. in \cite{Pardoux}), which is necessarily unique and smooth in view of our ellipticity assumption. We can also assume that $\mu$ is positive everywhere. By definition, such an invariant measure satisfies 
\begin{equation}
	\label{eq: invariant FPE}
	\mathcal{L}'\mu=\div(D\nabla\mu-b\mu)=0,
\end{equation}
With this setup we can take $\dense$ to be the set of (positive) Schwartz functions. 
\subsubsection{Wasserstein pre-GENERIC and hypocoercive formulations}
Let $h_t(x)=\rho_t(x)/\mu(x)$. Then $h$ satisfies the hypocoercive modified-Kolmogorov equation \eqref{m-KEactual} {where (see \cite[Proposition 3]{Villani})\footnote{See the discussion after \eqref{m-KElinearhypocform} on the notation $A^*A$.}
	$$
	B:=(b-D\nabla\log\mu)\cdot\nabla,\quad A:=\sigma\nabla \quad\text{and}\quad A^*g=-\mathrm{div}(\sigma^{{\top}} g)-(\nabla \log \mu, \sigma^{{\top}} g).
	$$
}
Since $B$ and $A$ are derivations on $\R^d$, they satisfy both chain and product rules. Hence according to Section \ref{linhypoc to quadpre-Gen}, the Fokker-Planck equation \eqref{eq: FPE} can be written in the quadratic pre-GENERIC form \eqref{quadratic pre-generic} with the entropy functional given by the relative entropy of $\rho$ with respect to $\mu$ (see \eqref{hypto GEN definitions}) and (see \eqref{hypto GEN definitions2})
\begin{align}
	&W\rho=B'\rho=\div\Big(\rho D\nabla\log(\mu)-b\rho\Big),\label{eq: W-FPE}
	\\& M_{\rho}(\xi)=2A'(\rho A\xi)=-2\div\Big(\rho D\nabla \xi\Big).    \label{eq: M-FPE}
\end{align}

\subsubsection{Pre-GENERIC formulation of the FPE from the symmetric-antisymmetric decomposition}

In this section, we will cast the Fokker-Planck equation \eqref{eq: FPE} into the pre-GENERIC formulation \eqref{splitting2} using the symmetric-antisymmetric decomposition of the generator $\cL$ (see \eqref{eq: diffusion operator}) and show that this is the same as the quadratic pre-GENERIC structure \eqref{eq: W-FPE}-\eqref{eq: M-FPE} obtained from the hypocoercive modified-Kolmogorov equation \eqref{m-KEactual} in the previous section.

First, we find the symmetric and antisymmetric parts in $L^2_\mu$ of $\cL$, using the relation
$$
\cL_s\phi=\frac{\cL\phi+\cL^*\phi}{2}=\frac{\cL\phi+ \mu^{-1}\cL'(\mu\phi) }{2},
$$
{where the second equality in the above follows from the following calculation
	$$
	(\varphi, \mathcal{L}^*\phi)=\langle \mu^{-1}\varphi, \mathcal{L}^*\phi\rangle=\langle \phi,\mathcal{L}(\mu^{-1}\varphi)\rangle=(\varphi,\mu^{-1}\mathcal{L}'(\mu\phi)).
	$$}
The symmetric part $\cL_s$ can be found explicitly
\begin{align*}
	\cL_s\phi&=\frac{1}{2}\Big(\cL\phi+ \mu^{-1}\cL'(\mu\phi)\Big) 
	\\&=\div(D\nabla\phi)+D\nabla\phi\cdot\nabla\log\mu \,,
\end{align*}
 we have used standard calculations and  the stationary condition \eqref{eq: invariant FPE} of $\mu$ to obtain the second equality. The anti-symmetric part is then given by
$$
\cL_a\phi=\cL\phi-\cL_s\phi=b\cdot\nabla\phi-D\nabla\phi\cdot\nabla\log\mu.
$$
The $L^2$-dual operator $\cL_s'$ of $\cL_s$ is 
\begin{equation}
	\label{eq: Ls dual}
	\cL'_s\rho=\div(D\nabla\rho)-\div(\rho D\nabla\log\mu)=\div\left[\rho D\nabla\big(\log(\rho/\mu)\big)\right]=\div\left[\rho D\nabla \big(\vd\cS_\mu(\rho)\big)\right].
\end{equation}
Next, we find the Hamiltonian $\mathscr{H}$ and its symmetric part $\mathscr{H}_s$ from $\cL$ and $\cL_s$ (see \eqref{eq: decompose H} and \eqref{HaHs}). 
Using \eqref{eq: diffusion-carreduchamp} we obtain
\begin{equation}
	\mathscr{H}(\rho;\xi)=\int e^{-\xi}\cL e^{\xi}\rho=\int e^{-\xi}\Big[e^{\xi}(\cL \xi+\Gamma(\xi,\xi))\Big]\rho=(\xi, \cL'\rho)+( D\nabla\xi\cdot\nabla\xi,\rho). \label{eq: Hdiffusion}
\end{equation}
Similarly we get
$$
\mathscr{H}_s(\rho;\xi)=\int e^{-\xi}\cL_s e^{\xi}\rho=(\xi, \cL_s'\rho)+(D\nabla\xi\cdot\nabla\xi,\rho).
$$
Having obtained $\mathscr{H}_s$, we now calculate $\psi^\star$ using \eqref{eq: Psi from Ls}
\begin{align*}
	\psi^\star(\rho;\xi)&=\mathscr{H}_s\left(\rho;\xi+\frac{1}{2}\vd\cS_\mu(\rho)\right)-\mathscr{H}_s\left(\rho;\frac{1}{2}\vd\cS_\mu(\rho)\right)
	\\&=(\xi,\cL_s'\rho)+\left(D\nabla (\vd\cS_\mu(\rho))\cdot\nabla\xi,\rho\right)+( D\nabla\xi\cdot\nabla\xi,\rho)
	\\&=(\xi,\cL_s'\rho)- \Big(\xi,\div\big[\rho D\nabla (\vd\cS_\mu(\rho))\big]\Big)+( D\nabla\xi\cdot\nabla\xi,\rho)
	\\&=(D\nabla\xi\cdot\nabla\xi,\rho),
\end{align*}
where we have used \eqref{eq: Ls dual} to obtain the last equality. The Frechet derivative of $\psi^\star$ with respect to its second argument is given by
$$
\vd_\xi\psi^\star(\rho;\xi)=-2\div(\rho D\nabla \xi),
$$
which is equal to $M_\rho(\xi)$ in \eqref{eq: M-FPE}. Finally, we have
\begin{align*}
	W\rho=\cL_a' \rho=\div(\rho D\nabla\log\mu-b\rho),
\end{align*}
which is the same as \eqref{eq: W-FPE}. Hence, the pre-GENERIC formulation \eqref{splitting2} of the Fokker-Planck equation obtained from the symmetric-antisymmetric decomposition of the generator is the same as the quadratic pre-GENERIC structure \eqref{eq: W-FPE}-\eqref{eq: M-FPE} obtained from the hypocoercive modified-Kolmogorov equation \eqref{m-KEactual}. 
\subsubsection{Kinetic Fokker Planck equation}
We now consider the kinetic Fokker Planck equation mentioned in the Introduction demonstrating Proposition \ref{prop:irrevlagrangian}. This equation is not elliptic but we can still take $\dense$ to be the set of Schwartz functions, see \cite{Villani}. We recall that the operator $\cL$ for the kinetic Fokker Planck  equation is given by (here in multidimensional setting)
\begin{equation*}
	(\cL\xi)(x,v) =v\cdot \nabla_x\xi-\nabla V(x)\cdot\nabla_v \xi-v\cdot\nabla_v \xi+\Delta_v\xi=\mathcal{J}\nabla H\cdot\nabla\xi+\div_p\Big[\rho\nabla_p\big(\vd\cS_\mu(\rho)\big)\Big],
\end{equation*}
where $\cS_\mu$ is the relative entropy \eqref{eq: entropykFPE} and
$$
\mathcal{J}:=\begin{pmatrix}
	0&I\\ -I&0
\end{pmatrix}, \quad H(x,v):=V(x)+\frac{1}{2}|v|^2
$$
The kinetic Fokker-Planck equation is a special case of \eqref{eq: diffusion} with
$$
b(x,v)=\begin{pmatrix}
	v\\
	-\nabla V(x)-v
\end{pmatrix}, \quad \sigma=\begin{pmatrix}
	0&0\\0 &I
\end{pmatrix}.
$$
The operator $\cL$ is reversible with respect to $d\mu(x,v)=Z^{-1}e^{-(V+\frac{1}{2}|v|^2)}dxdv$ up to $\mathcal{F}(x,v)=(x,-v)$  \cite{Lelievre2010}. According to \eqref{eq: Hdiffusion}, the Hamiltonian $\mathscr{H}:\mathcal{D}(\mathscr{H})\rightarrow \mathbb{R}$ is given by 
\begin{equation}
	\label{eq: HkFPE}
	\mathscr{H}(\rho,\xi)=\int (\cL \xi+|\nabla_p\xi|^2)\rho=( \cL'\rho,\xi)+( |\nabla_p\xi|^2,\rho).
\end{equation}
We now verify directly the relation \eqref{eq: relation H} using \eqref{eq: HkFPE}. In fact, the LHS of \eqref{eq: relation H} is
\begin{align*}
	\mathscr{H}(\rho; \vd\cS_\mu(\rho)+\xi)&= (\cL'\rho, \vd\cS_\mu(\rho)+\xi)+( |\nabla_p(\vd\cS_\mu(\rho)+\xi)|^2,\rho)
	\\&=\left(-\mathcal{J}\nabla H \cdot \nabla \rho+\div_p\Big[\rho\nabla_p\big( \vd\cS_\mu(\rho)\big)\Big],\vd\cS_\mu(\rho)+\xi\right)+\left( |\nabla_p(\vd\cS_\mu(\rho)+\xi)|^2,\rho\right)
	\\&=-(\mathcal{J}\nabla H\cdot\nabla\rho,\xi)+(\nabla_p (\vd\cS_\mu(\rho))\cdot\nabla_p\xi,\rho)+(|\nabla_p\xi|^2,\rho).
\end{align*}
The RHS of \eqref{eq: relation H} is
\begin{align*}
	\mathscr{H}(\trans_\#\rho; -\xi\circ \trans)&=( \cL'(\trans_\#\rho),-\xi\circ \trans)+(|\nabla_p (-\xi\circ \trans)|^2,\trans_\#\rho)
	\\&=(\mathcal{J}\nabla H\cdot\nabla (\trans_\#\rho),\xi\circ \trans)+\Big(\nabla_p \big(\vd\cS_\mu(\trans_\#\rho)\big)\cdot\nabla_p(\xi\circ \trans\Big), \trans_\#\rho) 
	\\&\qquad+ (|\nabla_p (\xi\circ \trans)|^2,\trans_\# \rho)
	\\&=-(\mathcal{J}\nabla H\cdot\nabla\rho,\xi)+\Big(\nabla_p (\vd\cS_\mu(\rho))\cdot\nabla_p\xi,\rho\Big)+(|\nabla_p\xi|^2,\rho).
\end{align*}
Thus $ \mathscr{H}(\rho;\vd\cS(\rho)+\xi)=\mathscr{H}(\trans_\#\rho; -\xi\circ \trans)$ as expected. Finally, we recall that the GENERIC structure of Generalised Langevin equations has been studied in \cite{DuongPavl}.

\subsection{Hamiltonian -Piecewise Deterministic Markov Processes (PDMPs)}
\label{sec:PDMPs}
A PDMP $\{z_t\}_{t\geq 0}$ on some state space say $Z$ is a continuous-time stochastic process that evolves as follows: between random times (usually called random events), it evolves according to a deterministic dynamics described by an ODE; random events happen at a rate $\lambda(z)$; if before the random event the value of the process was $z_{t-}$, when the random event takes place  the process jumps to a new position selected according to a Markov kernel $Q(z_{t-}; \cdot)$ so that $z_t \sim Q(z_{t-}; \cdot)$; the new position $z_t$ is then used as the new initial condition to start again the ODE evolution. More precisely, a PDMP is described by three ingredients:
\begin{description}
	\item[i)] an ODE with drift $F: Z \rightarrow Z$,  namely
	$$
	\frac{dz_t}{dt}= F(z_t)
	$$
	where $F$ is a nice enough function to ensure at least that the Cauchy problem for this ODE  is well-posed for every initial datum in $Z$;
	\item[ii)] a rate function $\lambda: Z \rightarrow \R_+$; \footnote{Because of the way the rate function will be employed, we will have that the probability of an event taking place in the interval $[t, t+\delta]$ is precisely $\lambda(z_t) \delta + o(\delta)$.}
	\item[iii)] a family of Markov kernels $Q$ on $Z$; that is,  for every $z \in Z$, $Q(z; \cdot)$ is a Markov kernel on $Z$. 
\end{description}
With this notation in place, the generator of the dynamics is given by 
\be\label{genPDMPgen}
(\cL f)(z)= b(z) \cdot \nabla_z f(z) +
\lambda(z) \int_{Z} \left[f(z') - f(z)\right] Q(z,dz')
\ee
for sufficiently smooth functions $f:Z \rightarrow \R$. The associated Fokker Planck operator $\cL'$ is 
\be\label{gendualPDMPgen}
(\cL'f)(z)= - \nabla_z \cdot (b(z)f(z))-\lambda(z)f(z)+\int_{Z}\lambda(z')f(z')Q(z',z)dz' \,,
\ee
having assumed for simplicity that the kernel $Q(z;dz')$ has a density which, with abuse of notation, we call $Q(z,z')dz'$. 
Clear references on the matter  are \cite{FearRos, DoucetVanetti}. In particular in \cite{FearRos} it is shown how some very popular sampling algorithms are of the above form, with $Z=\{(x,v) \in\R^d \times \{-1,1\}^d\}$,  $Z=\R^d \times \{\mbox{some discrete set}\}$ or $Z=\R^d \times \R^d$. The latter case  is the one we will be considering in our example and we will denote $z=(x,v)$ a point in  $\R^d \times \R^d$. 

A classical example which belongs to this setup is the so-called {\em Andersen thermostat} (see \cite[IIA-Sec2]{kpel}) which, from a sampling perspective, is the Hybrid Monte Carlo algorithm (see \cite{RadfordNeal} and \cite{OttStuart, OttFursov} for related Hamiltonian samplers). The Andersen thermostat can be understood as follows:  suppose we want to sample from the measure $\mu$ defined in \eqref{target}, or, more precisely, from its multidimensional version, 
\be\label{invmeas multid}
\mu(x,v)= e^{-V(x)}e^{-\lv v\rv^2/2} \, ,
\ee
where the normalization constant has now been included in the potential $V:\R^d \rightarrow \R$. 
One way of doing so is to employ a deterministic Hamiltonian dynamics, i.e. a dynamics with generator 
$$
\cL_H f : = v \cdot \nabla_x f(x,v)  -   \nabla_x  V(x) \cdot\nabla_v f(x,v) \,
$$
which is just the Liouville operator associated with the Hamiltonian $H:\R^d \times \R^d \rightarrow \R$ \footnote{This is a classical finite-dimensional Hamiltonian function, i.e. here we don't use the word {\em Hamiltonian} with the same meaning as for the functional $\mathscr H$ introduced in equation  \eqref{eq: H}.}  $H(x,v) =  V(x)+ |v|^2/2$. This dynamics preserves $\mu$ but it is clearly not ergodic on $Z$; however it can be modified by resampling the momentum variable according to a standard Gaussian on $\R^d$, 
$
\pi_0 \sim \mathcal{N}(0, I_d), $ where $I_d$ is the $d$-dimensional identity matrix. 
The resulting dynamics is the Andersen thermostat, which evolves according to the following generator:
\be\label{GenAndersenThermosta}
\cL_{AT}f:=\cL_H f+ \cL_Q f
\ee
where 
$$
(\cL_Q f)(x,v): = \lambda_r \left[(\tilde Q f)(x,v)-f(x,v)\right]
$$
with the constant $\lambda_r>0$ being the rate at which the momentum variable is resampled and
$$
(\tilde Q f)(x,v):= \int_{\R^d} f(x, \xi) \, d\pi_0(\xi) \,.
$$
The dual $\cL_Q'$ of $\cL_Q$ is often referred to as the BGK collision operator, see \cite{DMS2015}; the dual of $\cL_{AT}$, $\cL_{AT}'$, is given by 
$$
\cL_{AT}' = -\cL_H + \cL_Q'\, 
$$
with $\cL_Q'$ as in Lemma \ref{lemma:dualsPDMPexample} below; such an operator  can be put in  pre-GENERIC form \eqref{pre-GENERIC} (see \cite{kpelmath, kpel}) 
by 
setting
$
W(f) = -\cL_H f \,$ and using 
the dissipation potential 
$$
\psis_f(\xi) = \frac{\lambda_r}{(2\pi)^d} \iiint dx dv dv'\left[\cosh(\xi(x,v') - \xi(x,v)) -1  \right] e^{- \frac{|v|^2+|v'|^2}{4}} \sqrt{f(x,v)f(x,v')}\,
$$
and  the relative entropy $\cS_{\mu}(f)$ in \eqref{hypto GEN definitions} as entropy functional;   for $\mu$ as in \eqref{invmeas multid}, $\cS_{\mu}(f)$ becomes 
\begin{align}
	\cS(f)&= \cS_{\mu}(f) = \iint   dx dv f(x,v) \log\left( \frac{f(x,v)}{\mu(x,v)} \right) \nonumber \\
	&\stackrel{\eqref{hypto GEN definitions} }{=} \iint f \log f dx dv+ \iint f \left( V(x) + \frac{|v|^2}{2} \right) dx dv \,. \label{entropy}
\end{align}
In particular, $-\cL_H$ corresponds to the ``non gradient-flow" part of the dynamics, while, with $\psis$ and $\cS_{\mu}$ as in the above,  $\cL_Q'$ is in gradient flow  form:
\be\label{psiforLQ}
\cL_Q'f =  \vd_\xi\psis\left(f; -\frac{\vd\cS_{\mu}(f)}{2}\right)\,.
\ee
It is straightforward to verify that the degeneracy condition \eqref{degconpre-GENERIC} is satisfied, i.e.
\be\label{degconAndersentherm}
(W(f), \vd\cS_{\mu}(f)) = (-\cL_H f, \vd\cS_{\mu} (f))=0 \,
\ee
so that the entropy \eqref{entropy} is dissipated along the Andersen thermostat  flow $\pa_t f_t = \cL_{AT}'f_t$. 
\begin{Note}\label{note:any potentialandersthermost}
	\textup{We note now in passing that if in the expression \eqref{entropy} one replaces the potential $V(x)$ with any other (suitable) function of $x$ only,  $\hat V = \hat V (x)$ \footnote{suitable in the sense that the integral in \eqref{entropy} still makes sense} then the representation \eqref{psiforLQ} still holds, and $\cL_{AT}$ can still be cast in pre-GENERIC  form \eqref{pre-GENERIC},  but the degeneracy condition \eqref{degconAndersentherm} holds iff $\nabla_x\hat V(x) = \nabla_x V(x)$,    coherently with the fact that the dynamics converges to the measure  $\mu$.  }
\end{Note}

\smallskip \noindent
{\bf Casting Hamiltonian PD-MCMC in pre-GENERIC form. } If the Hamiltonian flow is too complicated to simulate numerically, one may wish to use an auxiliary potential $\tilde{V}(x)$ and  the associated Liouville operator
$$
(\tilde\cL_H f)(x,v) : = v \cdot \nabla_x f(x,v)  -   \nabla_x  \tilde{V}(x) \cdot\nabla_v f(x,v)\,.
$$
However the generator $\tilde \cL_H+\cL_Q$ does not preserve $\mu$ (it preserves $\tilde\mu = e^{-\tilde{V}(x)}\times\pi_0(v)$ instead). To correct for this bias one can introduce a further jump process, which restores the invariance of $\mu$. The resulting dynamics is a so-called Hamiltonian PD-MCMC, see \cite[Section 2.4]{DoucetVanetti}, with generator given by
\be\label{genhampdmp}
\cL f(x,v) =(\tilde \cL_H f)(x,v) +  (\cL_Q f) (x,v) + (\cL_R f)(x,v)  
\ee
where 
$$
(\cL_Rf)(x,v):= \tilde\lambda(x,v) [ f(x,R_x^{\tilde U} v) - f(x,v)], 
$$
$\tilde U (x) = V(x) - \tilde V(x)$ and 
\begin{equation}\label{paraminfbouncy}
	\tilde\lambda(x,v) :=  (v \cdot \nabla_x \tilde U)_+,  \quad R_x^{\tilde U} := v - \frac {2  (v \cdot  \nabla_x \tilde U)}{\| \nabla_x \tilde U \|^2}  \nabla_x \tilde U.
\end{equation}
As customary, in the above $c_+$ denotes the positive part of $c \in \R$ and ${\cdot}$ is just scalar product in $\R^d$. For future use we also set 
\be\label{defa}
a(x,v) = (v \cdot \nabla_x \tilde U) \,.
\ee
The rate of decay to equilibrium in $\hat{\ltm}:= \{f \in \ltm : \int f d\mu =0\}$ for the dynamics generated by \eqref{genhampdmp} can  be studied via hypocoercive techniques; this has been done in detail in \cite{Andrieu} (see also references therein). Using \cite[Theorem 1]{Andrieu} \footnote{The generator we are studying is a particular case of the generator $\cL_2$ defined in \cite[eqn (3)]{Andrieu}. To obtain our generator from $\cL_2$ in \cite{Andrieu} take $m_2=1, \lambda_{ref}(x)=\lambda_{ref}, F_0(x) = \nabla_x \tilde{V}, F_1(x) = \nabla_x \tilde{U}, K=1, n_1(x)= \nabla_x \tilde{U}/\|\nabla_x \tilde{U}\|, \lambda_1(x,v) = \tilde{\lambda}(x,v), (\mathcal B_1f)(x,v) = f(x,\rxu)$ and $(\Pi_v f)(x,v) = (\tilde Q f)(x,v)$. In doing so bear in mind that their $U_0$ is our $\tilde V$ and their $U$ is our $V$.} one can show the following: if the potential $V(x)$ satisfies assumption {\bf{H.1}} in \cite{Andrieu} and the auxiliary potential $\tilde{V}$ is such that 
$$
|\nabla_x \tilde V(x)| \leq b (1+ |\nabla_x V(x)| ), \quad \mbox{for some } b>0,\footnote{This is to satisfy \cite[Assumption \bf{H.2} (c)]{Andrieu}.} 
$$
then there exist constants $C, c>0$ such that
$$
\|e^{t \cL}f\|_{\ltm} \leq C e^{-ct}\|f\|_{\ltm}, \quad \mbox{for every } f \in \hat{\ltm} \,.
$$
Note that the functional framework for PDMPs is rather involved, and partly still to be developed as generators of PDMPs do not enjoy smoothing properties \cite{Andrieu, Monm}. However,  it has been shown in \cite{Andrieu} that, under the above assumptions the operator  $\cL$ generates a strongly continuous contraction semigroup in $\ltm$ and that the set $C_b^2$ (of bounded functions  with bounded derivatives up to order two) is a core for $\cD(\cL) \cap \cD(\cL^*)$, hence throughout this section we will assume that $\cL$ and $\cL^*$ act on such a set. As domain for  $\cL'$ (and all dual operators) we will instead consider the set $\dense$ of positive Schwartz functions.  

\begin{lemma}\label{lemma:dualsPDMPexample}
	The $L^2$ and $\ltm$ (formal) adjoint of the generator $\cL$ in  \eqref{genhampdmp} are given, respectively, by
	\begin{align*}
		\cL'& =  \tlh'+\cL_R'+\cL_Q'\\
		\cL^* &= \tlh^*+\cL_R^*+\cL_Q^*\, ,
	\end{align*}
	where
	\begin{align}
		(\cL'_Q f)(x,v) & = \lambda_r\pi_0(v) \left(\int f(x,w) dw\right) - \lambda_r f(x,v)\,, \label{def:lQprime} \\
		(\cL_R' f)(x,v) &= \tilde \lambda (x,v) [f(x,R_x^{\tilde U} v) - f(x,v)] - ( v \cdot \nabla_x \tilde U) f(x,R_x^{\tilde U} v) \nonumber \\
		& = ( -  (v \cdot \nabla_x \tilde U))_+ f(x,R_x^{\tilde U} v) - \tilde\lambda(x,v) f(x,v)\, \nonumber 
	\end{align}
	and 
	$$
	\tlh'=-\tlh\,, \quad \cL_R^*=\cL_R'\,.
	$$
	Moreover, 
	$$
	\cL_Q^*=\cL_Q \quad \mbox{and} \quad \tilde\cL^*_Hf = -\tilde\cL_H f+  (v \cdot \nabla_x\tilde U) f \,.
	$$
\end{lemma}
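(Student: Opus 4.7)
The plan is to treat the three summands of $\cL$ separately, compute the $L^2$-adjoint of each, and then pass to the $\ltm$-adjoint via the standard identity $\cL^* f = \mu^{-1} \cL'(\mu f)$ on $\dense$. Since $\mu(x,v)$ factorises as $e^{-V(x)} \pi_0(v)$ and depends on $v$ only through $|v|^2$, the two objects to keep track of throughout are the logarithmic derivatives $\nabla_x \log \mu = -\nabla_x V$ and $\nabla_v \log \mu = -v$ (for the differential piece), and the invariance $\mu(x, R_x^{\tilde U} v) = \mu(x,v)$ (for the reflection piece).

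For $\tilde\cL_H$, the vector field $(v, -\nabla_x \tilde V(x))$ on $\R^{2d}$ is divergence-free, so two straightforward integrations by parts give $\tilde\cL_H' = -\tilde\cL_H$. Computing $\mu^{-1}\tilde\cL_H'(\mu f) = -\mu^{-1}\tilde\cL_H(\mu f)$ and applying the product rule along with the two logarithmic derivatives above produces the cross term $(v \cdot \nabla_x V - v \cdot \nabla_x \tilde V)f = (v \cdot \nabla_x \tilde U) f$, yielding $\tilde\cL_H^* f = -\tilde\cL_H f + (v \cdot \nabla_x \tilde U) f$. For $\cL_Q$, Fubini in $\int (\cL_Q f) g \, dx\, dv$ swaps the roles of $v$ and $\xi$ and immediately gives the stated $\cL_Q'$. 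The symmetry $\cL_Q^* = \cL_Q$ in $\ltm$ then drops out after rewriting $g(x,v)\mu(x,v) = g(x,v) e^{-V(x)} \pi_0(v)$ and reading the inner integral as $(\tilde Q g)(x,v)$; equivalently, one checks directly that $\mu^{-1} \cL_Q'(\mu f) = \lambda_r (\tilde Q f) - \lambda_r f$.

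The main computational effort is on $\cL_R$. The scheme is: in the summand $\int \tilde\lambda(x,v) f(x, R_x^{\tilde U} v) g(x,v)\, dx\, dv$, perform for each fixed $x$ the change of variables $w = R_x^{\tilde U} v$. Since $R_x^{\tilde U}$ is a reflection through the hyperplane orthogonal to $\nabla_x \tilde U$, it is an involution with $|\det R_x^{\tilde U}| = 1$, so $dv = dw$ and $v = R_x^{\tilde U} w$. The key algebraic identity is
\[
R_x^{\tilde U} w \cdot \nabla_x \tilde U \;=\; w \cdot \nabla_x \tilde U - 2\, \frac{(w \cdot \nabla_x \tilde U)}{\|\nabla_x \tilde U\|^2}\, \|\nabla_x \tilde U\|^2 \;=\; -\,w \cdot \nabla_x \tilde U,
\]
which forces $\tilde\lambda(x, R_x^{\tilde U} w) = (-(w \cdot \nabla_x \tilde U))_+$. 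Reading off the dual then gives $(\cL_R' g)(x,v) = (-(v\cdot\nabla_x \tilde U))_+ g(x, R_x^{\tilde U} v) - \tilde\lambda(x,v) g(x,v)$, and the equivalent form in the lemma follows from the elementary identity $(-(v\cdot\nabla_x \tilde U))_+ = \tilde\lambda(x,v) - (v\cdot \nabla_x \tilde U)$.

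For $\cL_R^* = \cL_R'$ one repeats the same change of variables but now inside $\int (\cL_R f) g\, \mu\, dx\, dv$; the extra factor $\mu(x,R_x^{\tilde U} w) = \mu(x,w)$ passes through unchanged because $\|R_x^{\tilde U} w\|^2 = \|w\|^2$, so $\mu$ contributes nothing new and one obtains the same integrand against $\mu$ as before. I expect the bookkeeping for this reflection step, especially keeping straight the evaluation points of $\tilde\lambda$ and $g$ after the substitution and verifying the identity for $\tilde\lambda(x, R_x^{\tilde U} w)$, to be the only part of the argument that is not purely mechanical; everything else reduces to integration by parts and Fubini.
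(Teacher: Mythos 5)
Your proposal is correct and follows essentially the same route as the paper: the reflection change of variables $w = R_x^{\tilde U} v$ with $|\det|=1$, involutivity, the identity $R_x^{\tilde U}w \cdot \nabla_x\tilde U = -w\cdot\nabla_x\tilde U$, and the invariance $\mu(x,R_x^{\tilde U}v)=\mu(x,v)$ from $|R_x^{\tilde U}v|^2 = |v|^2$ are precisely the ingredients the paper uses for $\cL_R'$ and $\cL_R^*$. The paper only writes out the $\cL_R'$ computation and asserts the remaining adjoints are ``obtained with similar calculations''; you fill those in via the standard identity $\cL^* f = \mu^{-1}\cL'(\mu f)$ and the divergence-free observation for $\tilde\cL_H$, which is a clean way of making the omitted steps explicit.
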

\begin{proof}[Proof of Lemma \ref{lemma:dualsPDMPexample}] We show how to calculate $\cL_R'$, the rest is obtained with similar calculations:
	\begin{align*}
		\int dx \, dv  (\cL_R f) (x,v) g(x,v) & = 
		\int dx \, dv \,  \tilde \lambda (x,v) [f(x,R_x^{\tilde U} v) - f(x,v)] g(x,v) \,.
	\end{align*}
	Now make change of variable  $v' = \rxu v$ and then observe that  transformation $\rxu v$ is involutive,  so that $\rxu v'= v$,  and the Jacobian of the transformation is -1. We will also use below the identity 
	$$
	a(x,R_x^{\tilde U} v) = ( R_x^{\tilde U} v \cdot \nabla_x \tilde U ) = - ( v, \nabla_x \tilde U ) = -a(x,v) \,.$$
	With this in mind, 
	\begin{align*}
		\int \!\!\!dx \, dv  (\cL_R f) (x,v) g(x,v) & = \!\!
		\int \tilde \lambda (x,v) f(x,R_x^{\tilde U} v) g(x,v) dx\, dv - \int  \tilde \lambda (x,v) f(x,v) g(x,v) dx\, dv  \\
		& = \!\! \int \tilde \lambda (x, R_x^{\tilde U} v') 
		f (x,v') g(x,R_x^{\tilde U} v') dx dv'- \int  \tilde \lambda (x,v) f(x,v) g(x,v) dx\, dv \\
		& =\!\! \int (- ( v \cdot \nabla_x \tilde U ))_+ f(x,v) g(x,R_x^{\tilde U} v) dx dv- \int  \tilde \lambda (x,v) f(x,v) g(x,v) dx\, dv \,.
	\end{align*}
	This gives the second expression for  $\cL_R'$. The first is found by observing that $(-c)_+=c_-= c_+-c$, for every $ c \in \R$. To find the expression for $\cL_R^*$ one also needs to use 
	\be\label{v2conserved}
	\left\vert v\right\vert^2 = \left\vert \rxu v \right\vert^2, \quad \mbox{for every } v \in \R^d.
	\ee
\end{proof}
Using the above lemma, the symmetric and antisymmetric parts (in $\ltm$) of the operator \eqref{genhampdmp} are
\begin{align}\label{defLS}
	(\cL_S f)(x,v)&= \frac{\cL + \cL^*}{2}f (x,v) \nonumber \\
	&= (\cL_Q f)(x,v)+ 
	\frac{1}{2} \left( ( a_{-}(x,v))+ a_+(x,v) \right) \left[ f(x, R_x^{\tilde U} v) -f(x,v) \right] 
\end{align}
and 
\be\label{la}
(\cL_A f)(x,v)= \frac{\cL - \cL^*}{2}f (x,v) = 
(\tilde\cL_H f)(x,v)+ \cL_{JA}f  \,, 
\ee
where 
\be\label{lja}
\cL_{JA}f:=\frac{1}{2}a(x,v)\left[ f(x,R_x^{\tilde U}v) -f(x,v)\right] \,,
\ee
and, for the sake of clarity, we iterate that $a_+(x,v)$ and $a_-(x,v)$ are, respectively, the positive and negative parts of the function $a(x,v)$. 
We now come to show that the operator $\cL'= \cL_A'+\cL_S'$ can be put in pre-GENERIC form \eqref{pre-GENERIC}. To this end,  let 
\be\label{tildeS}
\cS_{\hV}(f) =  \iint dx dv \,  f(x,v) \log f(x,v) + \iint dxdv \,  f(x,v) \left(\hV (x) + \frac{|v|^2}{2} \right) \,,  
\ee
and note that, when $\hV (x) = V(x)$ the above entropy functional coincides with $\cS_{\mu}$. Then the following holds. 
\begin{proposition}\label{prop:hamPDMP}
	Let $\cL$ be the generator of the Hamiltonian PD-MCMC introduced in \eqref{genhampdmp}. Then, for any potential $\hV = \hV(x)$ such that the second addend of \eqref{tildeS} is integrable,  the operator $\cL'$ can be written in the form \be\label{pre-GENERICforPDMP}
	\cL'\rho = \tilde{W} (\rho) + \vd_{\xi}\tilde{\psis}\left(\rho;- \frac{\vd \cS_{\hV}(\rho)}{2}\right)
	\ee                     
	where $\tilde{W} (\rho)=\cL_A'\rho$ and, for every $\rho \in \dense$, $\tilde{\psis} \left(\rho; \xi\right) = \psis\left(\rho; \xi\right)+\psisj\left(\rho; \xi\right)$ with $\psis$ as in \eqref{psiforLQ} and $\psisj$ given by
	\be\label{disspotentialLSJ} 
	\psisj(\rho; \xi) := \frac 12
	\iint dx \, dv \, (a_++a_-)(x,v) 
	\sqrt{\rho(x,\rxv)\rho(x,v)} \left[ \cosh{\left(\xi(x,\rxv)-\xi(x,v)\right)} -1 \right] \,,
	\ee
	is positive and convex (in the the second argument).
\end{proposition}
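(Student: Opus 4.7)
The plan is to decompose $\cL' = \cL_A' + \cL_S'$ using the antisymmetric/symmetric splitting given in \eqref{la}--\eqref{defLS}; taking $\tilde W := \cL_A'$ by definition, it suffices to show that the symmetric part satisfies
\[
\cL_S'\rho = \vd_\xi \tilde\psis\bigl(\rho; -\tfrac{1}{2}\vd\cS_{\hV}(\rho)\bigr), \qquad \tilde\psis = \psis + \psisj.
\]
To this end I would further split $\cL_S = \cL_Q + \cL_{SR}$, where $\cL_{SR} f := \tfrac{1}{2}(a_+ + a_-)[f(x,\rxu v) - f(x,v)]$ is the symmetrised reflection operator appearing in \eqref{defLS}, and match the two contributions separately. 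The Andersen/BGK part $\cL_Q'$ is dealt with immediately by Note \ref{note:any potentialandersthermost}: the kernel $\tilde Q$ and the resampling rate $\lambda_r$ are independent of the potential, so the representation \eqref{psiforLQ} remains valid upon replacing $\cS_{\mu}$ by $\cS_{\hV}$, yielding $\cL_Q'\rho = \vd_\xi \psis(\rho; -\tfrac{1}{2}\vd\cS_{\hV}(\rho))$.

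Next I would compute $\cL_{SR}'$ directly. Using the involutivity of $\rxu$, the fact that the Jacobian of $v \mapsto \rxu v$ has absolute value one, and the identity $a(x,\rxu v) = -a(x,v)$ (hence $(a_+ + a_-)(x,\rxu v) = (a_+ + a_-)(x,v)$), the change of variables $v \mapsto \rxu v$ in the term containing $f(\rxu v)$ shows that $\cL_{SR}$ is self-adjoint in $L^2$, so
\[
\cL_{SR}'\rho(x,v) = \tfrac{1}{2}(a_+ + a_-)(x,v)\bigl[\rho(x,\rxu v) - \rho(x,v)\bigr].
\]
In parallel I would compute the Fr\'{e}chet derivative of $\psisj$ in its second argument: writing $Rv := \rxu v$ for brevity, differentiating under the integral using $\tfrac{d}{dt}\cosh t = \sinh t$, and then applying the same change of variables $v \mapsto Rv$ to the contribution with test function evaluated at $Rv$, one obtains
\[
\vd_\xi \psisj(\rho;\xi)(x,v) = -(a_+ + a_-)(x,v)\sqrt{\rho(x,Rv)\rho(x,v)}\,\sinh\bigl(\xi(x,Rv) - \xi(x,v)\bigr).
\]

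Substituting $\xi = -\tfrac{1}{2}\vd\cS_{\hV}(\rho) = -\tfrac{1}{2}(\log\rho + 1 + \hV(x) + |v|^2/2)$, the constant $1$ and the $\hV(x)$ contributions cancel in $\xi(Rv) - \xi(v)$ (the latter because $\hV$ depends only on $x$), while $|v|^2/2$ cancels by the identity $|Rv|^2 = |v|^2$ recorded in \eqref{v2conserved}. This leaves $\xi(Rv) - \xi(v) = \tfrac{1}{2}\log(\rho(v)/\rho(Rv))$, and then the elementary identity $\sinh(\tfrac{1}{2}\log(a/b)) = (a-b)/(2\sqrt{ab})$ collapses the whole expression to $\tfrac{1}{2}(a_+ + a_-)[\rho(Rv) - \rho(v)]$, which matches $\cL_{SR}'\rho$ exactly. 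Positivity and convexity of $\psisj$ are immediate afterwards: the integrand is non-negative because $\cosh(\cdot) - 1 \geq 0$ and $a_+ + a_- = |a| \geq 0$, and convexity in $\xi$ follows from the composition of the convex function $\cosh$ with the linear functional $\xi \mapsto \xi(Rv) - \xi(v)$, integrated against a non-negative weight.

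The main obstacle is the bookkeeping in these change-of-variable identities: one has to keep careful track of which quantities are invariant under $v \mapsto \rxu v$ (namely $|v|^2$, $\hV(x)$, and the weight $a_+ + a_-$) and which change sign (namely $a$). Once these invariances are in hand the matching of $\cL_{SR}'\rho$ with $\vd_\xi \psisj(\rho; -\tfrac{1}{2}\vd\cS_{\hV}(\rho))$ is purely algebraic, and the $\cosh$-structure of $\psisj$ is exactly what is needed to produce the square-root factorisation, in analogy with the non-quadratic Markov chain gradient flow formalism mentioned at the end of Section \ref{sec:4}. A minor point worth flagging is that the freedom in choosing $\hV$ (as opposed to being forced to use $V$) comes from the fact that the proposition asserts only the algebraic form \eqref{pre-GENERICforPDMP}, not the full pre-GENERIC structure, i.e.\ the degeneracy condition on $\tilde W$ is not required here (consistent with Note \ref{note:any potentialandersthermost}).
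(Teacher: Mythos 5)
Your proof is correct and follows essentially the same route as the paper: split $\cL_S = \cL_Q + \cL_{JS}$ (your $\cL_{SR}$), dispose of the BGK part via Note \ref{note:any potentialandersthermost}, and then match $\cL_{JS}$ with $\vd_\xi\psisj$ evaluated at $-\tfrac12\vd\cS_{\hV}$ using the invariance of $|v|^2$, $a_++a_-$, and the Jacobian under $v\mapsto\rxu v$. The only stylistic difference is that the paper carries out the matching entirely in the weak (test-function) formulation and performs the change of variables inside the pairing with $\eta$, whereas you extract the pointwise Fr\'echet derivative first and then collapse it via the identity $\sinh\bigl(\tfrac12\log(a/b)\bigr)=(a-b)/(2\sqrt{ab})$; the two are equivalent and the algebra agrees.
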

Before moving on to the proof, let us point out that the above proposition is not making any statements about the degeneracy condition \eqref{degconpre-GENERIC}; we will comment on the validity (rather, non validity) of such a condition in Note \ref{noteonfuncinequality} below.  
\begin{proof}[Proof of Proposition \ref{prop:hamPDMP}]
	Let us start by decomposing the operator $\cL_S$:
	$$
	\cL_S f= \cL_Q f + \cL_{JS}f
	$$
	where
	$$
	(\cL_{JS}\rho)(x,v) = \frac 12 (a(x,v)_++a(x,v)_-)
	\left[\rho(x, \rxv)- \rho(x,v)\right]\,.
	$$
	Therefore  $\cL_S'=\cL_Q'+ \cL_{JS}$ as $\cL_{JS}'=\cL_{JS}$;  then, by \eqref{psiforLQ} and Note \ref{note:any potentialandersthermost}, to show \eqref{pre-GENERICforPDMP} one just needs to observe the following
	\be\label{LJSgradientform}
	(\cL_{JS}f)(x,v) = \vd_{\xi}\hat{\psis}\left(f; -\frac{\vd \cS_{\hV}(f)}{2}\right)\,.
	\ee
	To see that the above holds, let us first calculate the variational derivative of $\hat{\psis}$: for every $\eta \in \dense$,  
	\begin{align*}
		& \left(\vd_\xi\psisj(\rho;\xi),\eta\right) = \frac{d}{dt}\psisj(\rho;\xi+t\eta)\Big\vert_{t=0}
		\\
		=& \frac 12 \int\!\!\!\!\!\int (a_++a_-)(x,v)\sqrt{\rho(x,R^{\U}_xv)\rho(x,v)}\sinh
		\Big(\xi(x,R^{\U}_x v)-\xi(x,v)\Big)\Big(\eta(x,R^{\U}_x v)-\eta(x,v)\Big)dxdv.
	\end{align*}
	We want to calculate the above expression at 
	$\xi = -\frac 12\vd \cS_{\hV}(\rho)$. In particular we need to calculate the difference
	$$
	\xi(x,R^{\U}_x v)-\xi(x,v) = - \frac 12 \left( \vd \cS_{\hV}(\rho)(x,v) - \vd\cS(\rho)(x,v)\right) \, . 
	$$
	It is in doing so that one can see that the specific choice of potential $\hV$ does not matter, as the potential is cancelled in the difference. Indeed, since
	$$
	\vd \cS_{\hV}(\rho)(x,v)=\log \rho(x,v)+\hV(x)+\frac{|v|^2}{2}+1 \, ,
	$$
	from \eqref{v2conserved}, we have 
	$$
	-\frac 12 \left( \vd \cS_{\hV}(\rho)(x,R^{\U}_x v) - \vd \cS_{\hV}(\rho)(x,v) \right) =\log \left(\sqrt{\frac {\rho(x,v)} {\rho(x,R^{\U}_x v)} }\right) \,.
	$$
	Therefore, using the identity  $(a_++a_-)(x,v)=(a_++a_-)(x, R^{\U}_x v)$, we obtain
	\begin{align*}
		&\left( \vd_\xi\psisj(\rho;-\frac{1}{2}
		\vd \cS(\rho)),\eta\right)
		\\&=\frac{1}{4}\int (a_++a_-)(x,v)\rho(x,v)\Big(\eta(x,R^{\U}_x v)-\eta(x,v)\Big)\,dxdv
		\\&\quad-\frac{1}{4}\int (a_++a_-)(x,v)\rho(x,R^{\U}_x v)\Big(\eta(x,R^{\U}_x v)-\eta(x,v)\Big)\,dxdv
		\\&= \frac 12 \int (a_++a_-)(x,v)\rho(x,v)\Big(\eta(x,R^{\U}_x v)-\eta(x,v)\Big)\,dxdv
		\\&= \frac 12 \int (a_++a_-)(x,v)\Big(\rho(x,R^{\U}_x v)-\rho(x,v)\Big)\eta(x,v)\,dxdv \,,
	\end{align*}
	which is the desired result. 
\end{proof}    

\smallskip \noindent
{\bf Relation to Section \ref{section:decomposition}. }
In Proposition \ref{prop:hamPDMP} the functional $\tilde{\psis}$ appeared as an ansatz. We show here that  $\tilde{\psis}$ does indeed coincide with the dissipation potential that we would obtain using the procedure of Section \ref{section:decomposition},  Theorem \ref{theorem:decomposition} in particular. First calculate the Hamiltonian 
\begin{align*}
	\mathfrak H_s (\rho;\xi) & := \iint e^{-\xi (x,v)} (\cL_S e^{\xi})(x,v) \rho(x,v) dx\, dv\\
	& = \iint e^{-\xi (x,v)} (\cL_Q e^{\xi})(x,v) \rho(x,v) dx\, dv + 
	\iint e^{-\xi (x,v)} (\cL_{JS} e^{\xi})(x,v) \rho(x,v) dx\, dv \,.
\end{align*}
For general purposes, for every $\rho \in \dense$, one can take $\cD_{\rho}(\mathfrak{H}_s)$ to be the set of smooth functions which grow at most quadratically in $v$ and at most like $V(x)$ in the space variable $x$. 
We work on the second addend of the above, which will give $\psisj$; applying a similar procedure to the first addend gives $\psis$. 
\begin{align*}
	\hat{\mathfrak H}_s (\rho;\xi) &:=
	\iint e^{-\xi (x,v)} (\cL_{JS} e^{\xi})(x,v) \rho(x,v) dx\, dv  \\
	& = \iint e^{-\xi (x,v)}
	\frac{1}{2} \left[a_+(x,v)+a_{-}(x,v) \right]
	\left[e^{\xi(x, \rxv)}-e^{\xi(x,v)} \right] \rho(x,v) dx\, dv\,.
\end{align*}
We want to show that \eqref{eq: Psi from Ls} holds for this example, i.e. that the following identity holds:
\begin{align*}
	\psisj(\rho;\xi) &= \hat{\mathfrak H}_s \left(\rho;\xi + \frac12 \vd{\cS_{\hV}}(f)\right) 
	- \hat{\mathfrak H}_s \left(\rho; \frac12 \vd{\cS_{\hV}}(\rho)\right) \\
	& = \iint \frac12 \left[a_+(x,v)+a_{-}(x,v) \right]
	\left[e^{\xi(x, \rxv)- \xi(x,v)} -1 \right] \sqrt{\rho(x, v) \rho(x, \rxv)} dx\, dv\,.
\end{align*}
Indeed, by observing that the above integral   does not change under the change of variable $v'= \rxv$, one can easily see that the above does indeed coincide with the expression for $\psisj$ given in \eqref{disspotentialLSJ}. 

\smallskip \noindent

{\bf Relative Entropy dissipation for Hamiltonian PD-MCMC. } We now want to show that the entropy functional $\cS_{\mu}$ decays along the flow, so from now on we pick $\hV(x) = V(x)$ in the representation \eqref{pre-GENERICforPDMP}. 
\begin{Note}\label{noteonfuncinequality}
	Within the pre-GENERIC framework, entropy decay is implied by either the degeneracy condition \eqref{degconpre-GENERIC} or  the dissipation condition \eqref{dissipcond}. 
	In our case the latter condition  would read
	\be\label{degconPDMP}
	(\cL_A'f, \vd\cS_{\mu} (f)) \leq 0 \,.
	\ee
	Let us calculate explicitly the LHS of the above.
	The operator $\cL_A'$, i.e. the $L^2$ adjoint of the operator $\cL_A$ defined in \eqref{la},  is
	$$
	(\cL_A' f)(x,v)= - \tlh f + \cL_{JA}'
	$$
	where
	$$
	\cL_{JA}' = - \frac{1}{2} a (x,v)\left[ f(x, R_x^{\tilde U}v)+ f(x,v) \right] \,.
	$$
	Because 
	$$
	- (\tlh f, \vd\cS_{\mu}(f)) = \int a(x,v) f(x,v) \, dx\, dv \, ,
	$$
	and  
	$$
	(\cL_{JA}'f, \vd\cS_{\mu}(f)) = \int (\cL_{JA}'f)(x,v) \log f (x,v) dx\, dv,  
	$$
	(the above coming from the fact that $\cL_{JA}(V+\frac{v^2}{2}+1)=0$, where we recall that $\cL_{JA}$ is defined in \eqref{lja})
	one has 
	$$
	\left( \vd\cS_{\mu} (f), \cL_A' f\right) = \int a(x,v) f(x,v) \, dx\, dv +
	\int (\cL_{JA}'f)(x,v) \log f (x,v) dx\, dv \,,
	$$
	or, more explicitly, 
	\begin{align}
		\left( \vd\cS_{\mu} (f), \cL_A' f\right) 
		& = \int a(x,v) f(x,v) \, dx\, dv \, +
		\frac12 \int a(x,v) f(x,v) \log\frac{ f(x, \rxv)}{ f(x,v)} dx\, dv \label{ed1}\\
		& = \int a(x,v) f(x,v) \, dx\, dv \, - \frac12  \int a(x,v) [f(x, \rxv)+f(x,v)] \log f(x,v) dx\, dv \,. \label{ed2}
	\end{align}
	\textup{We can now make some observations on \eqref{degconPDMP}
		\begin{itemize}
			\item Using the expression \eqref{ed1}, we can see that the inequality \eqref{degconPDMP} does not hold as a functional inequality, in the sense that it does not hold for every $f$ (in an appropriate class, to which the counterexample which we are about to exhibit should belong). This is easy to see in one dimension:  if $d=1$ then $\rxu v = -v$; if we choose $f$ to be in product form, more specifically, $f(x,v) = e^{-\bar{V}(x)} \pi_c(v)$ where $\pi_c(v)$ is a Gaussian with mean $c \in \R$ and variance $\sigma$, $\bar{V}$ is such that $e^{-\bar V}$ is normalised in space and $V, \tilde{V}$ such that $\nabla_x\tilde{U}(x)=1$, then $a(x,v)=v$ and
			$$
			\left( \vd\cS_{\mu} (f), \cL_A' f\right)  = -\frac{c^3}{\sigma} \,,
			$$
			which can  have any sign. 
			We note that in order to show entropy decay one needs not prove \eqref{degconPDMP} for every $f$, it is sufficient to prove \eqref{degconPDMP} only along the flow, but we have not been able to do so. 
			\item Finally, we observe that  \eqref{degconPDMP} holds (with equality) for every function $f$ which is reflection invariant, i.e. such that $f(x, \rxv) = f(x,v)$. Indeed if this is the case then the second addend in \eqref{ed1} vanishes. For the first, notice that by a change of variables
			$$
			\int a(x,v) f(x,v) dx\, dv =  - \int a(x,v) f(x, \rxv) dx\, dv\, ,
			$$
			which implies
			$$
			\int a(x,v) \left( f(x,v) + f(x, \rxv)\right) dx\, dv = 0 \quad \mbox{ for every}~ f; 
			$$
			by writing $f(x,v)$ as $f(x,v) = (f(x,v)+f(x, \rxv))/2 + (f(x,v)-f(x, \rxv))/2$ one then has
			\be\label{halfint}
			\int a(x,v) f(x,v) dx\, dv = \int a(x,v) \frac{f(x,v) - f(x, \rxv)}{2} dx\, dv \quad \mbox{ for every}~f \,.
			\ee
			This suggests that if the flow preserves the invariance of the profile $f$ under reflections then \eqref{degconPDMP} is true along the flow, assuming $f_0$ is reflection invariant. 
		\end{itemize}
	}
\end{Note}

In view of these observations, we prove entropy decay in a different way. 
\begin{proposition}
	Consider the Hamiltonian PD-MCMC with generator $\cL$ as in \eqref{genhampdmp} and let $\mu$ be the measure on $\R^{2d}$ defined in \eqref{invmeas multid}.  Then the relative entropy $\cS_{\mu}$  decreases along the solution of the FP equation $\pa_tf_t(x,v)= (\cL'f_t) (x,v)$.  
\end{proposition}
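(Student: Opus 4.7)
The approach I would take is to compute $\tfrac{d}{dt}\cS_\mu(f_t)$ directly from $\partial_t f_t=\cL'f_t$ by writing
\begin{equation*}
\tfrac{d}{dt}\cS_\mu(f_t) \;=\; (\vd\cS_\mu(f_t),\cL'f_t) \;=\; \int [\cL(\log(f_t/\mu))](x,v)\,f_t(x,v)\,dx\,dv,
\end{equation*}
using the $L^2$-duality between $\cL$ and $\cL'$, together with $\cL\mathbf 1=0$ (which holds for each summand $\tlh$, $\cL_Q$, $\cL_R$) to discard the ``$+1$'' in $\vd\cS_\mu$. I would then analyse the three contributions coming from $\cL=\tlh+\cL_Q+\cL_R$ separately, showing that the Hamiltonian and reflection pieces combine into a single non-positive term, while the BGK piece is independently non-positive thanks to the gradient-flow representation \eqref{psiforLQ}.

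For the Hamiltonian piece, since $\tlh$ is a derivation, $\tlh\log(f/\mu)=\tlh\log f+v\cdot\nabla_x(V-\tilde V)=\tlh\log f+a(x,v)$, and $\int (\tlh\log f)\,f\,dx\,dv=\int \tlh f\,dx\,dv=0$ by integration by parts. Hence this contribution equals $\int a\,f\,dx\,dv$. For the reflection piece, the identity $|\rxv|^2=|v|^2$ (see \eqref{v2conserved}) makes $\log\mu$ invariant under $v\mapsto\rxv$, so $\cL_R\log(f/\mu)=a_+(x,v)\log\frac{f(x,\rxv)}{f(x,v)}$, yielding contribution $\int a_+(x,v)\,f(x,v)\log\frac{f(x,\rxv)}{f(x,v)}\,dx\,dv$. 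The main combinatorial step is to merge these two terms via the involutivity of $v\mapsto \rxv$ (whose Jacobian has absolute value $1$, $\rxu$ being a reflection) and the cancellations $a(x,\rxv)=-a(x,v)$, $a_\pm(x,\rxv)=a_\mp(x,v)$: a change of variable $v\to\rxv$ rewrites $\int a\,f\,dx\,dv=\int a_+(x,v)[f(x,v)-f(x,\rxv)]\,dx\,dv$, so the Hamiltonian$+$reflection contribution becomes
\begin{equation*}
\int a_+(x,v)\big[\alpha-\beta+\alpha\log(\beta/\alpha)\big]\,dx\,dv,\qquad \alpha:=f(x,v),\ \beta:=f(x,\rxv).
\end{equation*}
The elementary inequality $\log t\le t-1$ applied at $t=\beta/\alpha$ gives $\alpha\log(\beta/\alpha)\le \beta-\alpha$, so the bracketed integrand is pointwise $\le 0$; since $a_+\ge 0$ the whole integral is $\le 0$.

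For the BGK contribution $(\cL_Q'f_t,\vd\cS_\mu(f_t))$, I would directly invoke the gradient-flow representation \eqref{psiforLQ}: writing $\cL_Q'f=\vd_\xi\psis(f;-\tfrac12\vd\cS_\mu(f))$, the convexity of $\psis$ in its second argument together with $\psis(f;0)=0$ and $\psis\ge 0$ yields the standard estimate
\begin{equation*}
(\cL_Q'f,\vd\cS_\mu(f))\;=\;\big(\vd_\xi\psis(f;-\tfrac12\vd\cS_\mu(f)),\vd\cS_\mu(f)\big)\;\le\;-2\psis(f;-\tfrac12\vd\cS_\mu(f))\;\le\;0.
\end{equation*}
Summing the three contributions concludes the proof. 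The main obstacle is not the algebra but the functional setting: one must ensure that $f_t$ is regular and integrable enough to justify the duality pairing, the change of variables and the manipulations in the BGK term, keeping in mind that PDMP generators have no smoothing effect. Within the framework of \cite{Andrieu} these manipulations can be made rigorous for initial data in a suitable dense class (with finite relative entropy and adequate moments), and the general statement then follows by an approximation argument.
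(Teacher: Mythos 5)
Your proof is correct and arrives at exactly the same final inequality that the paper proves (the bound \eqref{wtp1}, treated via $\log t\le t-1$ applied to $t=f(x,\rxv)/f(x,v)$), with the BGK contribution handled by the same convexity argument from the gradient-flow representation \eqref{psiforLQ}. The only difference is bookkeeping: the paper routes the computation through the symmetric/antisymmetric split $\cL_S+\cL_A$ (and the further split $\cL_S=\cL_Q+\cL_{JS}$, $\cL_A=\tlh+\cL_{JA}$), whereas you work directly with the defining decomposition $\cL=\tlh+\cL_Q+\cL_R$ and merge the $\tlh$ and $\cL_R$ contributions; this is slightly more direct (one does not need the explicit adjoints $\cL_A'$, $\cL_{JA}'$, or the identity \eqref{halfint}), but it is essentially the same computation.
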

\begin{proof}
	In order to prove entropy decay we need to show the following
	\begin{align*}
		\frac{d\cS_{\mu} (f_t)}{dt} & =  \left( \vd\cS_{\mu} (f_t), \cL_S' f_t\right)+ \left( \vd\cS_{\mu} (f_t), \cL_A' f_t\right) \leq 0.
	\end{align*}

	Because $\cL_S' = \cL_Q'+ \cL_{JS}$ from \eqref{psiforLQ} and \eqref{LJSgradientform} we already know 
	$$
	\left( \vd\cS_{\mu}, \cL_S' f\right) \leq 0 \quad \mbox{for all } f;
	$$
	more specifically, thanks to the convexity of the functionals $\psis$ and $\hat\psis$,  we know that both 
	\be\label{knowneg1}
	\left( \vd\cS_{\mu} (f), \cL_Q' f\right) \leq 0
	\ee
	and
	\be\label{knowneg2}
	\left( \vd\cS_{\mu} (f), \cL_{JS} f\right) \leq 0\,.
	\ee
	Hence, we know 
	$$
	\left(\vd\cS_{\mu} (f), \cL_Q' f\right)  = \int \log f (\cL_Q'f) dx\, dv +
	\lambda_r \int f\left( 1- \frac{v^2}{2}\right) dx\, dv \leq 0 
	$$
	and 
	\begin{align*}
		\left( \vd\cS_{\mu} (f), \cL_{JS} f\right)  & = \frac{1}{2} \int  |a(x,v)|\, \log f \left[f(x, \rxv) - f(x,v) \right]dx\, dv\\
		& =  \frac{1}{2} \int  |a(x,v)|\, \log\left( \frac{f(x,\rxv)}{f(x,v)}\right) f(x,v)dx\, dv\leq 0 
	\end{align*}
	and we want to prove the inequality 
	\begin{align}
		& \int \log f (\cL_Q'f) dx\, dv +
		\lambda_r \int f\left( 1- \frac{v^2}{2}\right) dx\, dv  +
		\frac{1}{2} \int  |a(x,v)|\, \log\left( \frac{f(x,\rxv)}{f(x,v)}\right) f(x,v) dx\, dv \nonumber \\
		& + \int a(x,v) f(x,v) dx\, dv +
		\frac12 \int a(x,v) f(x,v) \log\frac{ f(x, \rxv)}{ f(x,v)} dx\, dv \leq 0 \label{conj2} \,.
	\end{align}
	In view of \eqref{knowneg1} and \eqref{knowneg2},  it is sufficient to prove the following inequality, namely
	\begin{align}
		&\frac{1}{2} \int  |a(x,v)|\, \log\left( \frac{f(x,\rxv)}{f(x,v)}\right) f(x,v) dx\, dv+ \int a(x,v) f(x,v)  dx\, dv\notag
		\\&\qquad+
		\frac12 \int a(x,v) f(x,v) \log\frac{ f(x, \rxv)}{ f(x,v)}dx\, dv \leq 0 \,.
	\end{align}
	To this end start by observing that, using \eqref{halfint}, the above is equivalent to
	\begin{align}\label{wtp}
		&\frac{1}{2} \int  |a(x,v)|\, \log\left( \frac{f(x,\rxv)}{f(x,v)}\right) f(x,v) dx\, dv\nonumber\\
		&+ \frac12 \int a(x,v) \left[ f(x,v) - f(x,\rxu) \right] dx\, dv +
		\frac12 \int a(x,v) f(x,v) \log\frac{ f(x, \rxv)}{ f(x,v)} dx\, dv \leq 0 \,.
	\end{align}
	In turn, by writing $a=a_+-a_-$ and $|a|=a_++a_-$ and observing that for every function $g$ one has $$\int a_-(x,v) g(x,v) = \int a_+(x,v) g(x,\rxu),$$
	\eqref{wtp} is equivalent to
	\be\label{wtp1}
	\int a_+(x,v) \left[ f(x,v) - f(x,\rxu) \right] dx\, dv +
	\int a_+(x,v) f(x,v) \log\frac{ f(x, \rxv)}{ f(x,v)} dx\, dv \leq 0\,.
	\ee
	The above is now easy to prove; indeed, since $\log u \leq u-1$, we have
	$$
	\int a_+(x,v) f(x,v) \log\frac{ f(x, \rxv)}{ f(x,v)} dx\, dv\leq \int a_+(x,v) 
	\left[ f(x,\rxu) - f(x,v)
	\right]dx\, dv\,
	$$
	which is precisely \eqref{wtp1}.
\end{proof}
\appendix
\section{Basic facts about large deviations}\label{appendixA}
The notation in this appendix is independent of the notation in the rest of the paper. 
Let $(X, d)$ be a metric space,  $\{\chi_n\}_{n \in \mathbb N}$ be a sequence of $X$-valued random variables and $\cI: X \rightarrow  [0, \infty]$ be a lower semicontinuous function. The sequence $\{\chi_n\}_{n \in \mathbb N}$ satisfies a Large Deviation Principle (LDP) with rate function  $\cI$ if the following two conditions are satisfied (see \cite[Chapter 1]{FengKurtz}): 
\begin{itemize}
	\item for every open set $A \subseteq X$
	$$
	\liminf_{n \rightarrow \infty} \frac{1}{n} \log \mathbb P(\chi_n \in A) \geq -  \inf_{x \in A} \cI(x) 
	$$
	\item for every closed set $B \subseteq X$
	$$
	\limsup_{n \rightarrow \infty} \frac{1}{n} \log \mathbb P(\chi_n \in A) \leq -  \inf_{x \in B} \cI(x) \,.
	$$
\end{itemize}
The rate function $\cI$ is said to be a {\em good rate function} if for every $a \in [0, \infty)$ the set $\{ x: \cI(x)\leq a\}$ is compact. A short-hand notation to express the above is 
$$
\mathrm{Prob}\Big( \chi_n\approx \chi\Big)\overset{n\rightarrow \infty}{\sim} e^{-n \cI(\chi)} \,. 
$$
\section{Some observations about the definition of pre-GENERIC}\label{app:equiv}

For completeness we show here that the definition \eqref{pre-GENERIC} of pre-GENERIC is equivalent to \eqref{equivdefpre-generic}. To this end let us consider the function 
$$
\varphi(\rho; g):= \varphi_{\rho}(g) = \psi_{\rho}(g-W(\rho)) + \psi^{\star}_{\rho}\left( -\frac12 \vd \cS(\rho)\right) + \frac12 \left( g, \vd \cS(\rho)\right) \,.
$$
We look at the above as a function of $g$, for every $\rho$ fixed and we want to prove that $\varphi_z(g)=0$ iff $g = W(z) +(\vd_{\xi} \psi^{\star}_z)\left( -\frac 12 \vd \cS(z)\right)$. To this end note that the function $\varphi$ is the sum of two strictly convex functions plus a linear function and it is therefore  strictly convex (in $g$). Moreover, by the Young-Fenchel inequality we have 
$$
\frac12 \left( g, \vd \cS(\rho)\right) \leq  \psi_{\rho}(g) + \psi^{\star}_{\rho}\left( \frac12 \vd \cS(\rho)\right), 
$$ 
so that, using the symmetry of $\psi^{\star}$ and the orthogonality condition \eqref{degconpre-GENERIC}, $\varphi$ is positive as well and it attains its minimum (zero) iff 
$ \vd_g\varphi_{\rho}(g)=0$. Now, 
\begin{align*}
	\vd_v\varphi_{\rho}(g)& = \vd \psi_{\rho}(g-W(\rho)) + \frac 12 \vd \cS(\rho) =0 	\\
	& \Leftrightarrow g = W(\rho) +(\vd_g \psi_{\rho})^{-1}\left( -\frac 12 \vd \cS(\rho)\right)\\
	& \Leftrightarrow  g = W(\rho) +(\vd_{\xi} \psi^{\star}_{\rho})\left( -\frac 12 \vd \cS(\rho)\right)\, ,
\end{align*}
having used the fact that $\psi_{\rho}=\psi_{\rho}(g)$ and $\psi^{\star}_{\rho}= \psi^{\star}_{\rho}(\xi)$ are Legendre dual of each other, hence
$(\vd_v \psi_{\rho})^{-1} = \vd_{\xi} \psi^{\star}_{\rho}$. 
\begin{Note}In absence of the orthogonality condition \eqref{degconpre-GENERIC} one can observe that the following condition holds 
	$$
	\psi_{\rho}(v-W(\rho)) + \psi^{\star}_{\rho}\left( -\frac12 \vd \cS(\rho)\right) + \frac12 \left( v-W(\rho), \vd \cS(\rho)\right)=0
	$$
	if and only if $v = W(\rho) +(\vd_{\xi} \psi^{\star}_{\rho})\left( -\frac 12 \vd \cS(\rho)\right)$. 
	To see this,  just consider the function 
	$$
	\tilde{\varphi}_{\rho}(g)= \psi_{\rho}(g-W(\rho)) + \psi^{\star}_{\rho}\left( -\frac12 \vd \cS(\rho)\right) + \frac12 \left( g - W(\rho), \vd \cS(\rho)\right),  
	$$	
	which is again convex and positive by Young-Fenchel inequality.  The rest of the reasoning is identical to the one done above. 
\end{Note}

\section*{Acknowledgements} The research of M.H. Duong was supported by the EPSRC Grant EP/V038516/1.

\end{document}